\title[Semigroups for quadratic evolution equations]
{Semigroups for quadratic evolution equations acting on Shubin-Sobolev and Gelfand-Shilov spaces}
\author[P. Wahlberg]{Patrik Wahlberg}
\address{Dipartimento di Scienze Matematiche, Politecnico di Torino, Corso Duca degli Abruzzi 24,
10129 Torino, Italy}
\email{patrik.wahlberg[AT]polito.it}
\numberwithin{equation}{section}          %Detta gr att man f�r
\newtheorem{thm}{Theorem}
\numberwithin{thm}{section}
\newcommand{\rubrik}{}
\newtheorem{prop}[thm]{Proposition}
\newtheorem{cor}[thm]{Corollary}
\newtheorem{lem}[thm]{Lemma}
\theoremstyle{definition}
\theoremstyle{remark}
\newtheorem{rem}[thm]{Remark}              %T o m hit r bara allmn
\newcommand{\scal}[2]{\langle #1,#2\rangle}
\newcommand{\pd}[1] {\partial ^#1}
\newcommand{\pdd}[2] {\partial_{#1} ^{#2}}
\newcommand{\ro}{\mathbf R}
\newcommand{\no}{\mathbf N}
\newcommand{\rr}[1]{\mathbf R^{#1}}
\newcommand{\nn}[1]{\mathbf N^{#1}}
\newcommand{\zo}{\mathbf Z}
\newcommand{\co}{\mathbf C}
\newcommand{\cc}[1]{\mathbf C^{#1}}
\newcommand{\dd}{\mathrm {d}}
\newcommand{\nm}[2]{\Vert #1\Vert _{#2}}
\newcommand{\ep}{\varepsilon}
\newcommand{\fy}{\varphi}
\newcommand{\cdo}{\, \cdot \, }
\newcommand{\eabs}[1]{\langle #1\rangle}
\newcommand{\Sp}{\operatorname{Sp}}
\newcommand{\ssp}{\operatorname{sp}}
\newcommand{\GL}{\operatorname{GL}}
\newcommand{\On}{\operatorname{O}}
\newcommand{\cS}{\mathscr{S}}
\newcommand{\cF}{\mathscr{F}}
\newcommand{\cK}{\mathscr{K}}
\newcommand{\J}{\mathcal{J}}
\newcommand{\re}{{\rm Re} \,}
\newcommand{\im}{{\rm Im} \,}
\def\la{\langle}
\def\ra{\rangle}
\newcommand{\leqs}{\leqslant}
\newcommand{\geqs}{\geqslant}
\begin{document}

\begin{abstract}
We consider the initial value Cauchy problem for a class of evolution equations whose Hamiltonian is the 
Weyl quantization of a homogeneous quadratic form with non-negative definite real part. 
The solution semigroup is shown to be strongly continuous on several spaces: 
the Shubin--Sobolev spaces, the Schwartz space, the tempered distributions, 
the equal index Beurling type Gelfand--Shilov spaces and their dual ultradistribution spaces. 
\end{abstract}

\keywords{Quadratic evolution equations, Schr\"odinger equations, semigroups, Sobolev--Shubin spaces, Gelfand--Shilov spaces, ultradistributions.}
\subjclass[2010]{47D06, 35S11, 35Q40, 42B35, 46E35, 46F05.}

\maketitle

%%%%%%%%%%%%%%%%%%%%%%%
\section{Introduction}
%%%%%%%%%%%%%%%%%%%%%%%

Consider the Cauchy problem for the evolution equation
\begin{equation*}
	\left\{
	\begin{array}{rl}
	\partial_t u(t,x) + q^w(x,D) u (t,x) & = 0, \qquad t > 0, \quad x \in \rr d, \\
	u(0,\cdot) & = u_0\in L^2(\rr d), 
	\end{array}
	\right.
\end{equation*}
where $q^w(x,D)$ is the Weyl quantization of a symbol $q$ which is a homogeneous quadratic form on the phase space $T^* \rr d$, 
defined by a symmetric matrix $Q \in \cc {2d \times 2d}$ such that ${\re} Q \geqslant 0$. 
Particular cases include the heat equation, the free Schr\"odinger equation and the harmonic oscillator Schr\"odinger equation. 

H\"ormander \cite{Hormander2} showed that the solution operator $e^{- t q^w(x,D)}$ is a strongly continuous contraction semigroup on $L^2 (\rr d)$ with respect to the parameter $t \geqs 0$. 
Semigroup theory then guarantees that $u(t,x) = e^{- t q^w(x,D)} u_0$ is the unique solution to the Cauchy problem
when $u_0 \in D(q^w(x,D)) \subseteq L^2(\rr d)$ where $D(q^w(x,D))$ denotes the domain of the closure of $q^w(x,D)$ considered as an unbounded operator on $L^2$. 
In this paper we show that the semigroup $e^{- t q^w(x,D)}$ is strongly continuous in several other functional frameworks. 

First we show strong continuity on the Shubin--Sobolev spaces, or Hilbert modulation spaces $M_s^2(\rr d)$, with polynomial weights indexed by $s \in \ro$. 
Since the $M_s^2(\rr d)$ norms for $s \geqs 0$ is a system of seminorms for the Schwartz space $\cS(\rr d)$ we obtain as byproduct the following results. 
The propagator $e^{- t q^w(x,D)}$ is a locally equicontinuous strongly continuous semigroup on $\cS(\rr d)$. 
By duality it is also 
strongly continuous on the tempered distributions $\cS' (\rr d)$, equipped with either the weak$^*$ or the strong topology. 
In the latter case the semigroup is moreover locally equicontinuous. 

Then we consider the equal index Beurling type Gelfand--Shilov spaces $\Sigma_s(\rr d)$ for $s > \frac{1}{2}$. 
Again we prove that the propagator is a locally equicontinuous strongly continuous semigroup on $\Sigma_s (\rr d)$, 
that extends by duality to a strongly continuous semigroup on the Gelfand--Shilov ultradistribution space $\Sigma_s' (\rr d)$, equipped with either the weak$^*$ or the strong topology.
In the latter case we show again local equicontinuity. 
In the process we show that the Gelfand--Shilov space $\Sigma_s(\rr d)$ is reflexive, which apparently has not been stated in the literature. 

The proofs rely heavily on H\"ormander's results \cite{Hormander2}. 
We use both his formula for the Weyl symbol of the propagator $e^{- t q^w(x,D)}$, and his expression of the propagator as a Fourier integral operator with respect to a quadratic phase function.  
The latter is a particular case of an extension of the metaplectic group, called the metaplectic semigroup in \cite{Hormander2}, indexed by the semigroup of complex symplectic matrices that are positive in a certain sense. 

The results presented here provide a link that is missing in our papers \cite{Carypis1,PRW1,Wahlberg1}. 
In fact a discussion on the action of the solution semigroup on tempered distributions and on Gelfand--Shilov ultradistributions is lacking in them. 

The class of evolution equations under study in this paper is currently an active field of research
\cite{Hitrik1,Ottobre1,PRW1}. 
In particular it has been studied with respect to Gelfand--Shilov smoothing effects
\cite{Hitrik2,Hitrik3}, where it turns out that the singular space \cite{Hitrik1} plays a crucial role. 
The singular space is a linear subspace of the phase space $T^* \rr d$ determined by the quadratic form $q$. 

The paper is organized as follows. 
Section \ref{sec:prel} treats the functional analytical background concerning the spaces of functions and (ultra-)distributions we study. 
In Section \ref{sec:evolution} we specify the investigated class of evolution equations, and we give a brief overview of 
H\"ormander's results \cite{Hormander2} on the propagator acting on $L^2$ expressed with Fourier integral operators.  
In Section \ref{sec:propagator} we prepare for the main results in Sections \ref{sec:strongcontmodulation} and \ref{sec:GelfandShilov}, 
in particular by using results from \cite{Hormander2} to study the action of differential and monomial multiplication operators to the left of the propagator. 
Section \ref{sec:strongcontmodulation} treats strong continuity of the semigroup on Shubin--Sobolev spaces and its consequences, 
and finally Section \ref{sec:GelfandShilov} concerns strong continuity on Gelfand--Shilov spaces and their duals. 

%%%%%%%%%%%%%%%%%%%%%%%
\section{Preliminaries}\label{sec:prel}
%%%%%%%%%%%%%%%%%%%%%%%

An open ball in a Banach space $X$ with center $x_0 \in X$ and radius $r > 0$ is denoted $B_r (x_0) = \{ x \in X: \ \| x - x_0 \| < r\}$, 
and $B_r = B_r (0)$. 
We use $\eabs{x} = (1+|x|^2)^{\frac{1}{2}}$ for $x \in \rr d$,
and the partial derivative $D_j = - i \partial_j$, $1 \leqs j \leqs d$, acting on functions and distributions on $\rr d$, 
with extension to multi-indices. 
The standard basis vector in $\rr d$ with index $1 \leqs j \leqs d$ is denoted $e_j \in \rr d$. 
The transpose of a matrix $A \in \cc {d \times d}$ is denoted $A^T$. 
The real (complex) quadratic matrices of dimension $d$ is $\rr {d \times d}$ ($\cc {d \times d}$), the group of invertible real (complex) matrices is denoted $\GL(d,\ro) \subseteq \rr {d \times d}$ ($\GL(d,\co) \subseteq \cc {d \times d}$), and the subgroup of real orthogonal matrices is denoted $\On(d) \subseteq \GL(d,\ro)$.  

We write $f (x) \lesssim g (x)$ provided there exists $C>0$ such that $f (x) \leqs C \, g(x)$ for all $x$ in the domain of $f$ and of $g$. 
The symbol $f (x) \asymp g (x)$ means that $f (x) \lesssim g (x)$ and $g (x) \lesssim f (x)$. 
The normalization of the Fourier transform is
\begin{equation*}
 \cF f (\xi )= \widehat f(\xi ) = (2\pi )^{-\frac d2} \int _{\rr
{d}} f(x)e^{-i\scal  x\xi }\, \dd x, \qquad \xi \in \rr d, 
\end{equation*}
for $f\in \cS(\rr d)$ (the Schwartz space), where $\scal \cdo \cdo$ denotes the scalar product on $\rr d$. 
The conjugate linear action of a (ultra-)distribution $u$ on a test function $\phi$ is written $(u,\phi)$, consistent with the $L^2$ inner product $(\cdo ,\cdo ) = (\cdo ,\cdo )_{L^2}$ which is conjugate linear in the second argument. 

Denote translation by $T_x f(y) = f( y-x )$ and modulation by $M_\xi f(y) = e^{i \scal y \xi} f(y)$ 
for $x,y,\xi \in \rr d$ where $f$ is a function or distribution defined on $\rr d$. 
The composition is denoted $\Pi(x,\xi) = M_\xi T_x$. 
Let $\fy \in \cS(\rr d) \setminus \{0\}$. 
The short-time Fourier transform of a tempered distribution $u \in \cS'(\rr d)$ is defined by 
\begin{equation*}
V_\fy u (x,\xi) = (2\pi )^{-\frac d2} (u, M_\xi T_x \fy), \quad x,\xi \in \rr d. 
\end{equation*}
Then $V_\fy u$ is smooth and polynomially bounded \cite[Theorem~11.2.3]{Grochenig1}, and we have 
\begin{equation}\label{eq:STFTinverse}
(u, f) = (V_\fy u, V_\fy f)_{L^2(\rr {2d})}
\end{equation}
for $u \in \cS'(\rr d)$ and $f \in \cS(\rr d)$, provided $\| \fy \|_{L^2} = 1$, cf. \cite[Theorem~11.2.5]{Grochenig1}. 

The Hilbert modulation space, also known as the Shubin--Sobolev space, $M_s^2(\rr d) \subseteq \cS'(\rr d)$ of order $s \in \ro$\cite{Feichtinger1,Grochenig1,Nicola1,Shubin1} has norm
\begin{equation}\label{eq:Qsnorm}
\| u \|_{M_s^2} := \| \eabs{\cdot}^s V_\varphi u \|_{L^2(\rr {2d})} = \left( \iint_{\rr {2d}} \eabs{(x,\xi)}^{2s} |V_\varphi u(x,\xi)|^2 \, \dd x \, \dd \xi \right)^{1/2}. 
\end{equation}
Different functions $\fy \in \cS (\rr d) \setminus \{ 0 \}$ give equivalent norms. 
We have $M_0^2(\rr d) = L^2(\rr d)$, and for any $s,t \in \ro$ with $t \leqs s$ the embeddings
\begin{equation}\label{eq:shubinsobolevnested}
\cS(\rr d) \subseteq M_{s}^2 (\rr d) \subseteq M_t^2(\rr d) \subseteq \cS'(\rr d) 
\end{equation}
where $\cS'$ is equipped with its weak$^*$ topology, 
and 
\begin{equation}\label{eq:shubinsobolevspaces}
\cS(\rr d) = \bigcap_{s \in \ro} M_s^2 (\rr d), \qquad \cS'(\rr d) = \bigcup_{s \in \ro} M_s^2(\rr d). 
\end{equation}
(Inclusions of function and distribution spaces understand embeddings.)

We need some elements from the calculus of pseudodifferential operators \cite{Folland1,Hormander0,Nicola1,Shubin1}. 
Let $a \in C^\infty (\rr {2d})$ and $m \in \ro$. Then $a$ is a \emph{Shubin symbol} of order $m$, denoted $a\in \Gamma^m$, if for all $\alpha,\beta \in \nn d$ there exists a constant $C_{\alpha,\beta}>0$ such that
\begin{equation}\label{eq:shubinineq}
|\partial_x^\alpha \partial_\xi^\beta a(x,\xi)| \leqs C_{\alpha,\beta} \langle (x,\xi)\rangle^{m-|\alpha + \beta|}, \quad x,\xi \in \rr d.
\end{equation}
The Shubin symbols $\Gamma^m$ form a Fr\'echet space where the seminorms are given by the smallest possible constants in \eqref{eq:shubinineq}.

For $a \in \Gamma^m$ a pseudodifferential operator in the Weyl quantization is defined by
\begin{equation}\label{eq:weylquantization}
a^w(x,D) f(x)
= (2\pi)^{-d}  \int_{\rr {2d}} e^{i \langle x-y, \xi \rangle} a \left(\frac{x+y}{2},\xi \right) \, f(y) \, \dd y \, \dd \xi, \quad f \in \cS(\rr d),
\end{equation}
when $m<-d$. The definition extends to general $m \in \ro$ if the integral is viewed as an oscillatory integral.
The operator $a^w(x,D)$ then acts continuously on $\cS(\rr d)$ and extends uniquely by duality to a continuous operator on $\cS'(\rr d)$.
By Schwartz's kernel theorem the Weyl quantization procedure may be extended to a weak formulation which yields operators $a^w(x,D):\cS(\rr{d}) \to \cS'(\rr{d})$, even if $a$ is only an element of $\cS'(\rr{2d})$.

For $a \in \cS'(\rr {2d})$ and $f,g \in \cS(\rr d)$ we have
\begin{equation}\label{eq:wignerweyl}
(a^w(x,D) f,g) = (2 \pi)^{-\frac{d}{2}} (a, W(g,f) ) 
\end{equation}
where 
\begin{equation}\label{eq:wignerdistribution}
W(g,f) (x,\xi) = (2 \pi)^{-\frac{d}{2}} \int_{\rr d} g(x+y/2) \overline{f(x-y/2)} \, e^{- i \la y, \xi \ra} \, \dd y \in \cS(\rr {2d})
\end{equation}
is the Wigner distribution \cite{Folland1,Grochenig1}.

According to \cite[Theorem~1.7.16 and Corollary 1.7.17]{Nicola1}, \cite[Theorem~25.2]{Shubin1}, the Weyl operators with $a\in \Gamma^m$ act continuously on the Hilbert modulation spaces as
\begin{equation}\label{eq:shubincontMs}
a^w(x,D): M_{s}^2(\rr d) \to M_{s-m}^2(\rr d), \quad s \in \ro. 
\end{equation}

The real phase space $T^* \rr d \simeq \rr d \oplus \rr d$ is a real symplectic vector space equipped with the 
canonical symplectic form
\begin{equation*}
\sigma((x,\xi), (x',\xi')) = \langle x' , \xi \rangle - \langle x, \xi' \rangle, \quad (x,\xi), (x',\xi') \in T^* \rr d. 
\end{equation*}
This form can be expressed with the inner product as $\sigma(X,Y) = \la \J X, Y \ra$ for $X,Y \in T^* \rr d$
where 
\begin{equation}\label{eq:Jdef}
\J =
\left(
\begin{array}{cc}
0 & I_d \\
-I_d & 0
\end{array}
\right) \in \rr {2d \times 2d}. 
\end{equation}
The complex phase space $T^* \cc d \simeq \cc d \oplus \cc d$ is likewise a complex symplectic vector space with respect to the same symplectic form. 
(Note that $\scal \cdo \cdo$ is not conjugate linear in one argument, but bilinear for arguments in $\cc d \times \cc d$.)
The real (complex) symplectic group $\Sp(d,\ro)$ ($\Sp(d,\co)$) is the set of matrices in $\GL(2d,\ro)$ ($\GL(2d,\co)$) that leaves $\sigma$ invariant. 
Hence $\J \in \Sp(d,\ro)$. 
A Lagrangian subspace $\lambda \subseteq T^* \rr d$ ($\lambda \subseteq T^* \cc d$) is a real (complex) linear space of dimension $d$ such that $\sigma |_{\lambda \times \lambda} = 0$. 
A Lagrangian $\lambda \subseteq T^* \cc d$ is called positive \cite{Hormander1,Hormander2} if 
\begin{equation*}
i \sigma( \overline X, X) \geqs 0, \quad X \in \lambda. 
\end{equation*}

To each symplectic matrix $\chi \in \Sp(d,\ro)$ is associated an operator $\mu(\chi)$ that is unitary on $L^2(\rr d)$, and determined up to a complex factor of modulus one, such that
\begin{equation}\label{symplecticoperator}
\mu(\chi)^{-1} a^w(x,D) \, \mu(\chi) = (a \circ \chi)^w(x,D), \quad a \in \cS'(\rr {2d})
\end{equation}
(cf. \cite{Folland1,Hormander0}).
The operator $\mu(\chi)$ is a homeomorphism on $\mathscr S$ and on $\mathscr S'$.

The mapping $\Sp(d,\ro) \ni \chi \rightarrow \mu(\chi)$ is called the metaplectic representation \cite{Folland1}.
It is in fact a representation of the so called $2$-fold covering group of $\Sp(d,\ro)$, which is called the metaplectic group.  
The metaplectic representation satisfies the homomorphism relation modulo a change of sign:
\begin{equation*}
\mu( \chi \chi') = \pm \mu(\chi ) \mu(\chi' ), \quad \chi, \chi' \in \Sp(d,\ro).
\end{equation*}

We will use two systems of seminorms on $\cS ( \rr d)$. 
The first is 
\begin{equation}\label{eq:seminormsS1}
\cS \ni \fy \mapsto \| \fy \|_n := \max_{|\alpha+\beta| \leqs n} \sup_{x \in \rr d} \left| x^\alpha D^\beta \fy (x) \right|, \quad n \in \no, 
\end{equation}
and the second is 
\begin{equation}\label{eq:seminormsS2}
\cS \ni \fy \mapsto \| \fy \|_{M_s^2}, \quad s \geqs 0.  
\end{equation}
The fact that the seminorms \eqref{eq:seminormsS2} are equivalent to \eqref{eq:seminormsS1}
follows from \cite[Corollary~11.2.6 and Lemma~11.3.3]{Grochenig1}.

Let $h,s > 0$ be fixed. The space denoted $\mathcal S_{s,h}(\rr d)$
is the set of all $f\in C^\infty (\rr d)$ such that
\begin{equation}\label{eq:seminormSigmas}
\nm f{\mathcal S_{s,h}}\equiv \sup \frac {|x^\alpha D ^\beta
f(x)|}{h^{|\alpha + \beta |}(\alpha !\, \beta !)^s}
\end{equation}
is finite, where the supremum is taken over all $\alpha ,\beta \in
\mathbf N^d$ and $x\in \rr d$.
The function space $\mathcal S_{s,h}$ is a Banach space which increases
with $h$ and $s$, and $\mathcal S_{s,h} \subseteq \cS$.
The topological dual $\mathcal S_{s,h}'(\rr d)$ is
a Banach space and $\cS'(\rr d) \subseteq \mathcal S_{s,h}'(\rr d)$.
If $s>1/2$ then $\mathcal
S_{s,h}$ and $\bigcup _{h>0} \mathcal S _{1/2,h}$ contain all finite linear
combinations of Hermite functions.

The Beurling type \emph{Gelfand--Shilov space}
$\Sigma _s(\rr d)$ is the projective limit 
of $\mathcal S_{s,h}(\rr d)$ with respect to $h$ \cite{Gelfand2}. This means
\begin{equation}\label{GSspacecond1}
\Sigma _{s}(\rr d) = \bigcap _{h>0} \mathcal S_{s,h}(\rr d)
\end{equation}
and the Fr{\'e}chet space topology of $\Sigma _s(\rr d)$ is defined by the seminorms $\nm \cdot{\mathcal S_{s,h}}$ for $h>0$.
Then $\Sigma _s(\rr d)\neq \{ 0\}$ if and only if $s>1/2$ \cite{Petersson1}.
The topological dual of $\Sigma _s(\rr d)$ is the space of (Beurling type) \emph{Gelfand--Shilov ultradistributions} \cite[Section~I.4.3]{Gelfand2}
\begin{equation}\tag*{(\ref{GSspacecond1})$'$}
\Sigma _s'(\rr d) =\bigcup _{h>0} \mathcal S_{s,h}'(\rr d).
\end{equation}

The dual space $\Sigma _s'(\rr d)$ may be equipped with several topologies: the weak$^*$ topology, the strong topology, the Mackey topology, and the topology defined by the union \eqref{GSspacecond1}$'$ as an inductive limit topology \cite{Schaefer1}. The latter topology is the strongest topology such that the inclusion $\mathcal S_{s,h}'(\rr d) \subseteq \Sigma _s'(\rr d)$ is continuous for all $h > 0$.  

As we shall see shortly, the space $\Sigma _{s}(\rr d)$ may be equipped with Hilbert space seminorms, and thus 
it may be considered a countably-Hilbert space \cite{Becnel1}. According to \cite[Theorem~4.16]{Becnel1}
the strong, the Mackey and the inductive limit topologies on $\Sigma _s'(\rr d)$ coincide. 

We will study $\Sigma_s'(\rr d)$ equipped with the weak$^*$ topology, denoted $\Sigma_{s, \rm w}'(\rr d)$, or with the strong topology, denoted $\Sigma_{s,\rm str}'(\rr d)$. 
The latter topology is defined by seminorms 
\begin{equation*}
\Sigma _s'(\rr d) \ni u \mapsto \sup_{\fy \in B} |(u,\fy)|
\end{equation*}
for each subset $B \subseteq \Sigma_s(\rr d)$ which is bounded, that is uniformly bounded with respect to each seminorm. 
Both spaces $\Sigma_{s,\rm w}'(\rr d)$ and $\Sigma_{s,\rm str}'(\rr d)$ are sequentially complete \cite[Theorems~I.5.1 and I.5.6]{Gelfand2}. 
From the latter result we also have: A sequence is convergent in $\Sigma_{s,\rm w}'(\rr d)$ exactly when it converges in the weak$^*$ topology of $\mathcal S_{s,h}'(\rr d)$ for some $h>0$. 

By the proof of Proposition \ref{prop:GSreflexive} (see Section \ref{sec:GelfandShilov}) it will follow that the space $\Sigma_s(\rr d)$ is a \emph{perfect} space in the terminology of \cite{Gelfand2}: It is a space in which any bounded set is relatively compact. 
By \cite[Theorem~I.6.4]{Gelfand2} sequential convergence in $\Sigma_{s,\rm w}'$ and $\Sigma_{s,\rm str}'$ hence coincide.

The Roumieu type Gelfand--Shilov space is the union 
\begin{equation*}
\mathcal S_s(\rr d) = \bigcup _{h>0}\mathcal S_{s,h}(\rr d)
\end{equation*}
equipped with the inductive limit topology \cite{Schaefer1}, that is 
the strongest topology such that each inclusion $\mathcal S_{s,h}(\rr d) \subseteq\mathcal S_s(\rr d)$
is continuous. 
Then $\mathcal S _s(\rr d)\neq \{ 0\}$ if and only if $s \geqs 1/2$. 
The corresponding (Roumieu type) Gelfand--Shilov ultradistribution space is 
\begin{equation*}
\mathcal S_s'(\rr d) = \bigcap _{h>0}\mathcal S_{s,h}'(\rr d). 
\end{equation*}
For every $s > 0$ and $\ep > 0$
\begin{equation*}
\Sigma _s (\rr d)\subseteq \mathcal S_s(\rr d)\subseteq
\Sigma _{s+\ep}(\rr d).
\end{equation*}
We will not use the Roumieu type spaces in this article but mention them as a service to a reader interested in a wider context. 
On a similar note we notice that $(\alpha! \beta!)^s$ in \eqref{eq:seminormSigmas} may be replaced 
by $\alpha!^{s_1} \beta!^{s_2}$ for different parameters $s_1, s_2 > 0$ which leads to a more flexible family of spaces. 
In this paper we restrict to the equal index case.

The Gelfand--Shilov (ultradistribution) spaces enjoy invariance properties, with respect to 
translation, dilation, tensorization, coordinate transformation and (partial) Fourier transformation.
The Fourier transform extends 
uniquely to homeomorphisms on $\mathscr S'(\rr d)$, $\mathcal
S_s'(\rr d)$ and $\Sigma _s'(\rr d)$, and restricts to 
homeomorphisms on $\mathscr S(\rr d)$, $\mathcal S_s(\rr d)$
and $\Sigma _s(\rr d)$, and to a unitary operator on $L^2(\rr d)$.
In particular the Wigner distribution \eqref{eq:wignerdistribution} satisfies $W(g,f) \in \Sigma_s(\rr {2d})$
if $f,g \in \Sigma_s(\rr d)$, and the Weyl quantization formula \eqref{eq:wignerweyl} holds for $a \in \Sigma_s'(\rr {2d})$ and $f,g \in \Sigma_s(\rr d)$. 
Likewise \eqref{eq:STFTinverse} holds when $u \in \Sigma_s'(\rr d)$, $f \in \Sigma_s(\rr d)$, $\fy \in \Sigma_s(\rr d)$ and $\| \fy \|_{L^2} = 1$. 

We will use the Hermite functions 
\begin{equation*}
h_\alpha (x) = \pi ^{-\frac d4}(-1)^{|\alpha |}
(2^{|\alpha |}\alpha !)^{-\frac 12}e^{\frac {|x|^2}2}
\partial ^\alpha e^{-|x|^2}, \quad x \in \rr d, \quad \alpha \in \nn d,  
\end{equation*}
and formal series expansions with respect to Hermite functions: 
\begin{equation*}
f = \sum_{\alpha \in \nn d} c_\alpha h_\alpha
\end{equation*}
where $\{ c_\alpha \}$ is a sequence of complex coefficients
defined by $c_\alpha = c_\alpha(f) = (f,h_\alpha)$. 

Gelfand--Shilov spaces and their ultradistribution
duals, as well as the Schwartz space $\cS$ and the tempered distributions $\cS'$, and $L^2$,
can be identified by means of such series expansions, 
with characterizations in terms of the corresponding sequence
spaces (see \cite{Gramchev1,Gramchev2,Langenbruch1,Reed1}). 
Let
\begin{equation*}
f = \sum_{\alpha \in \nn d} c_\alpha h_\alpha 
\end{equation*}
and 
\begin{equation*}
\phi = \sum_{\alpha \in \nn d} d_\alpha h_\alpha 
\end{equation*}
with sequences $\{ c_\alpha\}$ and $\{d_\alpha \}$ of finite support.
Then the sesquilinear form
\begin{equation}\label{eq:sequilinearform}
( f, \phi ) = \sum_{\alpha \in \nn d} c_\alpha \overline{d_\alpha} 
\end{equation}
agrees with the inner product on $L^2(\rr d)$ since
$\{ h_\alpha \}_{\alpha \in \nn d} \subseteq L^2(\rr d)$ is an orthonormal basis. 

The form \eqref{eq:sequilinearform} extends uniquely to the duality on $\cS'(\rr d) \times \cS (\rr d)$, 
to the duality on $\mathcal S_{s}'(\rr d) \times \mathcal S_{s}(\rr d)$ for $s \geqs 1/2$, 
as well as to the duality on $\Sigma_{s}'(\rr d) \times \Sigma_{s}(\rr d)$ for $s > 1/2$.

To wit Simon \cite[Theorem~V.13]{Reed1} showed
that the family of Hilbert sequence spaces
\begin{equation*}
\ell_{r}^2 = \ell_{r}^2(\nn d)  
= \left\{ \{c_\alpha\}: \| c_\alpha \|_{\ell_r^2} = 
\left( \sum_{\alpha \in \nn d} |c_\alpha|^2 \eabs{\alpha}^{2 r}  \right)^{\frac{1}{2}} < \infty \right\}
\end{equation*}
for $r > 0$ provides a family of seminorms for $\cS$ that is equivalent to \eqref{eq:seminormsS1}, 
via the homeomorphism $\cS \ni f \mapsto \{ (f,h_\alpha)\}_{\alpha \in \nn d}$. 
Thus the Schwartz space $\cS (\rr d)$ is identified topologically
as the projective limit 
\begin{equation}\label{eq:Ssequence}
\cS(\rr d) = \bigcap_{r>0} \left\{ \sum_{\alpha \in \nn d}
c_\alpha h_\alpha : \ \{c_\alpha\} \in \ell_{r}^2 \right\}
\end{equation}
and $\cS'(\rr d)$ is identified \cite[Theorem~V.14]{Reed1} as the union
\begin{equation*}
\cS'(\rr d) = \bigcup_{r>0} \left\{
\sum_{\alpha \in \nn d} c_\alpha h_\alpha : \
{\{c_\alpha\} \in \ell_{-r}^2} \right\}
\end{equation*}
with weak$^*$ convergence of the sum for each element in $\cS'$. 

Likewise Langenbruch \cite[Theorem~3.4]{Langenbruch1} has shown
that the family of Hilbert sequence spaces
\begin{equation*}
\ell_{s,r}^2 = \ell_{s,r}^2(\nn d)  
= \left\{ \{c_\alpha\}: \| c_\alpha \|_{\ell_{s,r}^2} = 
\left( \sum_{\alpha \in \nn d} |c_\alpha|^2 e^{2 r |\alpha|^{\frac{1}{2s}}} \right)^{\frac{1}{2}} < \infty \right\}
\end{equation*}
for $r > 0$ yields a family of seminorms that is equivalent to the family \eqref{eq:seminormSigmas} for all $h > 0$, when $s \geqs \frac1{2}$. 
For $s > 1/2$ this means that the space $\Sigma_{s}(\rr d)$ can be identified topologically
as the projective limit 
\begin{equation}\label{eq:GSsequence}
\Sigma_{s}(\rr d) = \bigcap_{r>0} \left\{ \sum_{\alpha \in \nn d}
c_\alpha h_\alpha : \ \{c_\alpha\} \in \ell_{s,r}^2 \right\} 
\end{equation}
and for $s \geqs 1/2$ the space $\mathcal S_{s}(\rr d)$ can be identified topologically
as the inductive limit 
\begin{equation*}
\mathcal S_{s}(\rr d) = \bigcup_{r>0} \left\{ \sum_{\alpha \in \nn d}
c_\alpha h_\alpha : \ \{c_\alpha\} \in \ell_{s,r}^2 \right\}. 
\end{equation*}

Moreover \cite[Corollary~3.5]{Langenbruch1} shows, in particular, that $\Sigma_{s}'(\rr d)$ may be identified as the union 
\begin{equation*}
\Sigma_{s}'(\rr d) = \bigcup_{r>0} \left\{
\sum_{\alpha \in \nn d} c_\alpha h_\alpha : \
{\{c_\alpha\} \in \ell_{s,-r}^2} \right\}, 
\end{equation*}
and $\mathcal S_{s}'(\rr d)$ may be identified as the intersection 
\begin{equation*}
\mathcal S_{s}'(\rr d) = \bigcap_{r>0} \left\{
\sum_{\alpha \in \nn d} c_\alpha h_\alpha : \
{\{c_\alpha\} \in \ell_{{s,-r}}^2} \right\}, 
\end{equation*}
in both cases with weak$^*$ convergence of the sum for each ultradistribution. 

Working with Gelfand--Shilov spaces we will occasionally need the inequality (cf. \cite{Carypis1})
\begin{equation*}
|x+y|^{1/s} \leqs 2 ( |x|^{1/s} + |y|^{1/s}), \quad x,y \in \rr d, 
\end{equation*}
which holds when $s \geqs \frac{1}{2}$ and which implies 
\begin{equation}\label{eq:exppeetre1}
\begin{aligned}
e^{A |x+y|^{1/s} } & \leqslant e^{2A |x|^{1/s}} e^{2A |y|^{1/s}}, \quad A >0, \quad x,y \in \rr d, \\
e^{- 2 A |x+y|^{1/s} } & \leqslant e^{- A |x|^{1/s}} e^{ 2 A |y|^{1/s}}, \quad A >0, \quad x,y \in \rr d. 
\end{aligned}
\end{equation}

Finally we state the basic definitions of a one-parameter semigroup of operators. 
Often semigroups of operators are considered on a Banach space \cite{Engel1,Pazy1} but we need also the case of a locally convex space \cite{Komura1,Yosida1}. 
Thus let $X$ be a locally convex topological vector space, and let $\{ T_t, \ t \geqs 0 \}$ be a one-parameter family of continuous linear operators on $X$. 
The family $\{ T_t, \ t \geqs 0 \}$ is called a strongly continuous semigroup provided
\begin{equation*}
T_0 = I, \quad
T_t T_s  = T_{t+s}, \quad t,s \geqs 0, \quad {\rm and} \quad 
\lim_{t \to 0^+} T_t x = x \quad \forall x \in X.
\end{equation*}
The infinitesimal generator $A$ of the semigroup $T_t$ is the linear, in general unbounded, operator
\begin{equation*}
A x = \lim_{t \to 0^+} t^{-1} (T_t - I ) x
\end{equation*}
equipped with the domain $D(A) \subseteq X$ of all $x \in X$ such that the right-hand side limit is well defined in $X$. 

A \emph{locally equicontinuous} strongly continuous semigroup \cite{Komura1} is a strongly continuous semigroup $\{ T_t \}_{t \geqs 0}$ on $X$ such that for all $t_0>0$ and each seminorm $p$ on $X$ there exists a seminorm $q$ on $X$ such that 
\begin{equation*}
p (T_t x) \leqs q(x), \quad x \in X, \quad 0 \leqs t \leqs t_0. 
\end{equation*}
%

%%%%%%%%%%%%%%%%%%%%%%%%%%%%%%%%%%
\section{A class of evolution equations and the propagator on $L^2$}
\label{sec:evolution}
%%%%%%%%%%%%%%%%%%%%%%%%%%%%%%%%%%

Let $q$ be a homogeneous quadratic form on $T^* \rr d$, that is 
\begin{equation}\label{eq:quadraticform}
q(x,\xi) = \la (x,\xi), Q(x, \xi) \ra, \quad (x,\xi) \in T^* \rr d, 
\end{equation}
where $Q \in \cc {2d \times 2d}$ is symmetric, and suppose its real part is non-negative definite, denoted ${\re} Q \geqslant 0$. 
We study the initial value Cauchy problem for the following class of evolution equations. 
\begin{equation}
\label{eq:CP}
\tag{CP}
	\left\{
	\begin{array}{rl}
	\partial_t u(t,x) + q^w(x,D) u (t,x) & = 0, \qquad t > 0, \quad x \in \rr d, \\
	u(0,\cdot) & = u_0\in L^2(\rr d). 
	\end{array}
	\right.
\end{equation}
Here $q^w(x,D)$ acts on functions of the variable $x \in \rr d$. 
The \emph{Hamilton map} $F$ corresponding to $q$ is 
\begin{equation*}
F = \J Q \in \cc {2d \times 2d}
\end{equation*}
with $\J \in \Sp(d,\ro)$ defined by \eqref{eq:Jdef}. 
This framework of evolution equations has been studied in many papers, e.g. \cite{Hitrik1,Hormander2,Ottobre1}. 

The symbol $q$ is a Shubin symbol of order two, $q \in \Gamma^2$, 
which implies that $q^w(x,D): M_{s+2}^2(\rr d) \to M_s^2(\rr d)$ is continuous for all $s \in \ro$ by \eqref{eq:shubincontMs}. 
There is a loss of regularity of order two. 

The operator $q^w(x,D)$ can be considered as an unbounded operator in $L^2(\rr d)$. 
In \cite[pp.~425--26]{Hormander2} it is shown that its maximal realization equals its closure as an operator initially defined on $\cS$, 
and the closure of $-q^w(x,D)$ generates a strongly continuous contraction semigroup on $L^2$ for $t \geqs 0$ denoted by $e^{- t q^w(x,D)}$. 
The contraction property means that the $L^2$ operator norm satisfies $\| e^{- t q^w(x,D)} \| \leqs 1$ for all $t \geqs 0$. 

By semigroup theory (see e.g. \cite[Theorem~I.2.4]{Pazy1} and \cite[pp.~483--84]{Kato1}) the unique solution in the space $C^1([0,\infty),L^2)$ to \eqref{eq:CP}
is 
\begin{equation*}
u(x,t) = e^{- t q^w(x,D)} u_0
\end{equation*}
where $u_0 \in D(q^w(x,D)) \subseteq L^2(\rr d)$ which denotes the domain of the closure of $q^w(x,D)$. 
The notation $C^1([0,\infty),L^2)$ understands that the derivative is right continuous at $t=0$. 

In the particular case when ${\re} Q=0$ the propagator is given by means of the metaplectic representation. 
In fact, 
then $e^{-t q^w(x,D)}$ is a group of unitary operators on $L^2(\rr d)$, and we have by \cite[Theorem 4.45]{Folland1}
\begin{equation*}
e^{-t q^w(x,D)} = \mu(e^{-2 i t F}), \quad t \in \ro. 
\end{equation*}
In this case $F$ is purely imaginary and $i F \in \ssp(d,\ro)$, the real symplectic Lie algebra, which implies that $e^{-2 i t F} \in \Sp(d,\ro)$ for any $t \in \ro$ \cite{Folland1}.

In the general case ${\re} Q \geqslant 0$, 
H\"ormander \cite{Hormander2} has shown that the propagator $e^{- t q^w(x,D)}$ can be identified as a time-indexed family of Fourier integral operators, 
described briefly as follows. 
According to \cite[Theorem 5.12]{Hormander2} the Schwartz kernel of the propagator $e^{-t q^w(x,D)}$ for $t \geqs 0$ is an oscillatory integral defined by a quadratic phase function.   
More precisely we have
\begin{equation*}
e^{-t q^w(x,D)} = \cK_{e^{-2 i t F}},
\end{equation*}
where $\cK_{e^{-2 i t F}}: \cS(\rr d) \to \cS'(\rr d) $ is the linear continuous operator with kernel 
\begin{equation}\label{schwartzkernel1}
K_{e^{-2 i t F}} (x,y) 
= (2 \pi)^{-(d+N)/2} \sqrt{\det \left( 
\begin{array}{ll}
p_{\theta \theta}''/i & p_{\theta y}'' \\
p_{x \theta}'' & i p_{x y}'' 
\end{array}
\right) } \int_{\rr N} e^{i p(x,y,\theta)} \dd \theta \in \cS'(\rr {2d}),  
\end{equation}
where the quadratic form $p$ is specified below. 

By \cite[Proposition~5.8]{Hormander2} $\cK_{e^{-2 i t F}}$ is in fact continuous on $\cS (\rr d)$.
The kernel $K_{e^{-2 i t F}}$ is indexed by the matrix $e^{-2 i t F} \in \cc {2d \times 2d}$. 
By \cite[Lemma~5.2]{PRW1} the matrix $e^{-2 i t F}$ belongs to $\Sp(d,\co)$, and its graph
\begin{equation}\label{graph1}
\lambda' : = \mathcal G(e^{- 2 i t F})  = \{(e^{- 2 i t F} X, X): X \in T^* \cc d\} \subseteq T^* \cc d \times T^* \cc d,
\end{equation}
is a positive Lagrangian with respect to the symplectic form $\sigma_1$ defined by \cite[Eq.~(5.1)]{PRW1}. 
As explained after \cite[Lemma~5.1]{PRW1} the Lagrangian $\lambda'$ can be twisted as in \cite[Eq.~(5.2)]{PRW1} to give a positive Lagrangian $\lambda \subseteq T^* \cc {2d}$. 
According to \cite[Theorem 5.12 and p.~444]{Hormander2} the oscillatory integral \eqref{schwartzkernel1} is associated with the positive Lagrangian $\lambda$. 

By \cite[Proposition~4.4]{PRW1} there exists a quadratic form $p$ on $\rr {2d + N}$ that defines $\lambda$,  
and this $p$ defines \eqref{schwartzkernel1}. 
The factor in front of the integral  \eqref{schwartzkernel1} is designed to make the oscillatory integral independent of the quadratic form $p$ on $\rr {2d + N}$, including possible changes of dimension $N$ as discussed after \cite[Proposition~4.2]{PRW1}, as long as $p$ defines $\lambda$ by means of \cite[Eq.~(4.8)]{PRW1} with $x \in \cc d$ replaced by $(x,y) \in \cc {2d}$. 

It is shown in \cite[p.~444]{Hormander2} that the kernel $K_{e^{-2 i t F}}$ is uniquely determined by the Lagrangian $\lambda$, apart from a sign ambiguity which is not essential for our purposes. 
For brevity we denote $\cK_{e^{-2 i t F}} = \cK_t$ for $t \geqs 0$. 

By \cite[p.~446]{Hormander2} the $L^2$ adjoint of $\cK_t$, 
defined by 
\begin{equation}\label{eq:L2adjoint}
(\cK_t f,g) = (f, \cK_t^* g), \quad f,g \in L^2(\rr d), 
\end{equation}
is $\cK_t^* = \cK_T$ where 
\begin{equation*}
T = \overline{ (e^{-2i t F})^{-1}} 
=  \overline{ e^{2i t F}} 
= e^{- 2 i t \overline F}. 
\end{equation*}
Thus the adjoint $\cK_t^*$ is an operator of the same type as $\cK_t$. 
It is obtained from the latter by conjugation of the matrix $F$, i.e. 
$\cK_t^* = \cK_{e^{- 2 i t F}}^* = \cK_{e^{- 2 i t \overline F}}$.

%%%%%%%%%%%%%%%%%%%%%%%%%%%%%%%%%%
\section{The propagator, multiplication and differential operators}
\label{sec:propagator}
%%%%%%%%%%%%%%%%%%%%%%%%%%%%%%%%%%

The following lemma is an important tool for our results. 
It can be seen as a commutator relation for the propagator $\cK_t$
and $x^\alpha D^\beta$ operators, and particularly the limit behavior as $t \to 0^+$.  

\begin{lem}\label{lem:operatorcommutator}
If $\alpha,\beta \in \nn d$ then
\begin{equation}\label{eq:multdiffcommut}
x^\alpha D^\beta \cK_t 
= \sum_{|\gamma + \kappa| \leqs |\alpha+\beta|} C_{\gamma,\kappa} (t) \, \cK_t \, x^\gamma D^\kappa
\end{equation}
where $[0,\infty) \ni t \mapsto C_{\gamma,\kappa} (t)$ are continuous functions that satisfy
\begin{equation}\label{eq:tlimits}
\begin{aligned}
\lim_{t \to 0^+} C_{\alpha,\beta} (t) & = 1, \\ 
\lim_{t \to 0^+} C_{\gamma,\kappa} (t) & = 0, \quad (\gamma,\kappa) \neq (\alpha,\beta).
\end{aligned}
\end{equation}
\end{lem}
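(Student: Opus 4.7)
My plan is to prove the lemma by induction on the total order $n = |\alpha + \beta|$, after first establishing the key first-order case. The base $n = 0$ is trivial, as $\cK_0 = I$ and $\cK_t \to I$ strongly, giving $C_{0,0}(t) \to 1$. The crucial input is a symplectic covariance identity for the propagator on linear symbols: for every linear form $\ell$ on $T^* \rr d$,
\[
\ell^w(x,D) \, \cK_t = \cK_t \, (\ell \circ e^{-2itF})^w(x,D).
\]
This is the extension to H\"ormander's metaplectic semigroup of the classical relation $\ell^w \mu(\chi) = \mu(\chi)(\ell \circ \chi)^w$ valid on the real metaplectic group. In concrete terms, $x_j \cK_t$ and $D_j \cK_t$ each equal $\cK_t$ applied to a linear combination of $x_k$ and $D_k$ whose coefficients are specific entries of the matrix $e^{-2itF} \in \Sp(d,\co)$ and therefore depend smoothly on $t$, reducing to the original $x_j$ or $D_j$ at $t = 0$.

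For the induction step at order $n+1$, I would split off a \emph{last} factor: write $x^\alpha D^\beta = x^{\alpha'} D^{\beta'} L$ where $L = D_{j_0}$ for some $j_0$ with $\beta_{j_0} \geqs 1$ when $\beta \neq 0$, and $L = x_{j_0}$ otherwise, with $|\alpha' + \beta'| = n$. Applying the first-order covariance and then the inductive hypothesis gives
\[
x^\alpha D^\beta \cK_t = x^{\alpha'} D^{\beta'} \cK_t \, \widetilde{L}(t) = \sum_{|\gamma + \kappa| \leqs n} C_{\gamma,\kappa}'(t) \, \cK_t \, x^\gamma D^\kappa \, \widetilde{L}(t),
\]
where $\widetilde L(t)$ is a linear combination of the $x_k$ and $D_k$ with continuous coefficients, equal to $L$ at $t = 0$. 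Normal-ordering $x^\gamma D^\kappa \widetilde L(t)$ into canonical form $x^\delta D^\mu$ with $|\delta + \mu| \leqs n + 1$ is a routine application of $[D_k, x_l] = -i\delta_{kl}$, and yields the claimed expansion with continuous coefficients. At $t = 0$, the only surviving inductive coefficient is $C_{\alpha',\beta'}'(0) = 1$ and $\widetilde L(0) = L$, so everything collapses to $x^{\alpha'} D^{\beta'} L = x^\alpha D^\beta$ which is already in canonical form; all other contributions either carry off-diagonal entries of $e^{-2itF}$ (which vanish at $t = 0$) or inherit vanishing factors $C_{\gamma,\kappa}'(0)$.

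The principal obstacle is establishing the first-order covariance, since the clean unitary conjugation identity on the real metaplectic group does not automatically transfer to the complex symplectic semigroup elements $e^{-2itF}$. I expect this to be extractable from H\"ormander's construction, either by computing directly how conjugation by $\cK_t$ acts on linear operators using the quadratic phase $p$ in the oscillatory kernel \eqref{schwartzkernel1} together with the fact that the positive Lagrangian $\lambda$ in \eqref{graph1} is the twist of the graph of $e^{-2itF}$, or by invoking the Fourier integral operator calculus for positive Lagrangians developed in \cite{Hormander2}. Once that first-order result is in hand, the induction and the continuity and limit claims are essentially bookkeeping.
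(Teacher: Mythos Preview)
Your proposal is correct and follows essentially the same route as the paper. The first-order covariance identity you identify as the key input is exactly the relation \eqref{eq:commut1order}, which the paper lifts verbatim from the proof of \cite[Proposition~5.8]{Hormander2}; your induction differs only cosmetically in that you peel off the rightmost factor and apply the covariance before the inductive hypothesis, whereas the paper peels off the leftmost factor and applies the inductive hypothesis first (doing all $D$'s, then all $x$'s).
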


\begin{proof}
Let $(x_0,\xi_0) \in T^* \cc d$ and set $(y_0(t) ,\eta_0 (t)) = e^{2 i t F} (x_0,\xi_0) \in T^* \cc d$. 
By the proof of \cite[Proposition~5.8]{Hormander2} we have
\begin{equation}\label{eq:commut1order}
\left( \la D_x, x_0 \ra - \la x,\xi_0 \ra  \right) \cK_t = \cK_t \left( \la D_x, y_0(t) \ra - \la x,\eta_0 (t) \ra  \right). 
\end{equation}

We first prove \eqref{eq:multdiffcommut} and \eqref{eq:tlimits} when $\alpha = 0$ and $\beta \in \nn d$ using induction.
Let $1 \leqs j \leqs d$ and set $x_0 = e_j$ and $\xi_0 = 0$. Then 
\begin{equation}\label{eq:limitspacecoord}
\lim_{t \to 0^+} ( y_0(t),  \eta_0 (t) ) = ( e_j, 0),
\end{equation}
so \eqref{eq:commut1order} proves \eqref{eq:multdiffcommut} and \eqref{eq:tlimits} when $|\beta| = 1$.
Suppose \eqref{eq:multdiffcommut} and \eqref{eq:tlimits} hold when $\alpha = 0$ and $|\beta| = n \geqs 1$. 
Using \eqref{eq:commut1order} we have for $1 \leqs j \leqs d$, $x_0 = e_j$ and $\xi_0 = 0$
\begin{equation*}
D^{e_j +\beta} \cK_t 
= \sum_{|\gamma + \kappa| \leqs n} C_{\gamma,\kappa} (t) \cK_t \, \left( \la D_x, y_0(t) \ra - \la x,\eta_0 (t) \ra  \right) x^\gamma D^\kappa
\end{equation*}
where $\lim_{t \to 0^+} C_{0,\beta} (t) = 1$ and $\lim_{t \to 0^+} C_{\gamma,\kappa} (t) = 0$ when $(\gamma,\kappa) \neq (0,\beta)$. 
Again using \eqref{eq:limitspacecoord} we obtain \eqref{eq:multdiffcommut} and \eqref{eq:tlimits} for $\alpha = 0$ and $|\beta| = n+1$, which constitutes the induction step. Thus the claim \eqref{eq:multdiffcommut} and \eqref{eq:tlimits} is true for $\alpha = 0$ and any $\beta \in \nn d$. 

Next let $1 \leqs j \leqs d$ and set $x_0 = 0$ and $\xi_0 = -e_j$. Then 
\begin{equation}\label{eq:tlimit2}
\lim_{t \to 0^+} ( y_0(t),  \eta_0 (t) ) = ( 0, - e_j).
\end{equation}
By combining what we have shown with \eqref{eq:commut1order} we have for $\beta \in \nn d$
\begin{equation*}
x_j D^{\beta} \cK_t 
= \sum_{|\gamma + \kappa| \leqs |\beta|} C_{\gamma,\kappa} (t) \cK_t \, \left( \la D_x, y_0(t) \ra - \la x,\eta_0 (t) \ra  \right) x^\gamma D^\kappa
\end{equation*}
where $\lim_{t \to 0^+} C_{0,\beta} (t) = 1$ and $\lim_{t \to 0^+} C_{\gamma,\kappa} (t) = 0$ when $(\gamma,\kappa) \neq (0,\beta)$. 
Invoking \eqref{eq:tlimit2} proves the claims \eqref{eq:multdiffcommut} and \eqref{eq:tlimits} for $|\alpha| = 1$ and $\beta \in \nn d$. 
The generalization to $\alpha \in \nn d$ arbitrary follows again by induction. 
\end{proof}

In the next result we use the concept of a bounded set in $\cS(\rr d)$. 
A subset $B \subseteq \cS(\rr d)$ is bounded provided each seminorm is uniformly bounded. 
Using the system of seminorms \eqref{eq:seminormsS1} this can be expressed as
\begin{equation}\label{eq:boundedsetS1}
\sup_{\fy \in B} \| \fy \|_n = C_n < \infty \quad \forall n \in \no. 
\end{equation}

We prove a few preparatory results that are needed in Section \ref{sec:strongcontmodulation},
where we show that the propagator $\cK_t$ is a strongly continuous semigroup on $M_s^2(\rr d)$ for all $s \in \ro$. 

\begin{lem}\label{lem:boundedsetinvarianceS}
If $B \subseteq \cS(\rr d)$ is bounded and $\gamma, \kappa \in \nn d$ then $\{ x^\gamma D^\kappa \fy, \ \fy \in B \} \subseteq \cS(\rr d)$ is also bounded. 
\end{lem}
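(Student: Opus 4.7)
The plan is to reduce the claim to the quantitative seminorm bound
\begin{equation*}
\| x^\gamma D^\kappa \fy \|_n \leqs C_{n,\gamma,\kappa} \, \| \fy \|_{n + |\gamma+\kappa|}, \quad \fy \in \cS(\rr d),
\end{equation*}
with a constant independent of $\fy$. Once this is in hand, the defining condition \eqref{eq:boundedsetS1} applied at level $n + |\gamma + \kappa|$ to the bounded set $B$ immediately gives $\sup_{\fy \in B} \| x^\gamma D^\kappa \fy \|_n \leqs C_{n,\gamma,\kappa} \, C_{n+|\gamma+\kappa|} < \infty$ for every $n \in \no$, which is exactly what it means for $\{x^\gamma D^\kappa \fy : \fy \in B\}$ to be bounded in $\cS(\rr d)$.

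To establish the quantitative bound I would fix $\alpha, \beta \in \nn d$ with $|\alpha+\beta| \leqs n$ and apply the Leibniz rule to $D^\beta(x^\gamma D^\kappa \fy)$. This produces a finite linear combination, indexed by $\mu \leqs \min(\beta,\gamma)$, of terms of the form $c_\mu \, x^{\gamma - \mu} D^{\beta - \mu + \kappa} \fy$, with coefficients $c_\mu$ depending only on $\beta, \gamma, \mu$. Multiplying by $x^\alpha$ preserves the finite-sum structure and only shifts the monomial order: each summand becomes a multiple of $x^{\alpha + \gamma - \mu} D^{\beta - \mu + \kappa} \fy$, whose combined multi-index order is at most $|\alpha + \beta| + |\gamma + \kappa| \leqs n + |\gamma + \kappa|$. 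Taking $\sup_{x \in \rr d}$ dominates each term by a constant times $\|\fy\|_{n+|\gamma+\kappa|}$, and maximizing over $|\alpha+\beta| \leqs n$ yields the claimed inequality with a constant that depends only on $n, \gamma, \kappa$.

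This is, in essence, the observation that multiplication by $x^\gamma$ and differentiation $D^\kappa$ are continuous endomorphisms of $\cS(\rr d)$ and therefore send bounded sets to bounded sets. There is no analytic obstacle; the only thing to execute carefully is the index bookkeeping that tracks how the seminorm index increases by $|\gamma + \kappa|$ under these operations.
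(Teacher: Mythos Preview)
Your proposal is correct and follows essentially the same approach as the paper: both arguments apply the Leibniz rule to $D^\beta(x^\gamma D^\kappa \fy)$, observe that each resulting term $x^{\alpha+\gamma-\mu} D^{\beta-\mu+\kappa}\fy$ has combined order at most $n+|\gamma+\kappa|$, and thereby bound $\|x^\gamma D^\kappa \fy\|_n$ by a constant times $\|\fy\|_{n+|\gamma+\kappa|}$. The paper computes the constant explicitly as $|\gamma|!\,2^n$, but the structure of the argument is identical.
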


\begin{proof}
We use the seminorms \eqref{eq:seminormsS1} so we assume that \eqref{eq:boundedsetS1} is valid. 
For $\alpha, \beta \in \nn d$ we have
\begin{align*}
\left| x^\alpha D ^\beta \left( x^\gamma D^\kappa \fy (x) \right) \right|
& = \left| \sum_{\sigma \leqs \min( \beta,\gamma)} \binom{\beta}{\sigma} \frac{\gamma! \, i^{-|\sigma|}}{(\gamma-\sigma)!} x^{\alpha + \gamma - \sigma} D^{\kappa+\beta-\sigma} \fy (x) \right| \\
& \leqs |\gamma|! \sum_{\sigma \leqs \min( \beta,\gamma)} \binom{\beta}{\sigma}  \left| x^{\alpha + \gamma - \sigma} D^{\kappa+\beta-\sigma} \fy (x) \right|
\end{align*}
which gives for any $n \in \no$
\begin{align*}
\| x^\gamma D^\kappa \fy \|_n
& = \max_{|\alpha+\beta| \leqs n} \sup_{x \in \rr d} \left| x^\alpha D ^\beta \left( x^\gamma D^\kappa \fy (x) \right) \right| \\
& \leqs |\gamma|! \max_{|\alpha+\beta| \leqs n} \sum_{\sigma \leqs \min( \beta,\gamma)} \binom{\beta}{\sigma} 
\sup_{x \in \rr d}  \left| x^{\alpha + \gamma - \sigma} D^{\kappa+\beta-\sigma} \fy (x) \right| \\
& \leqs |\gamma|! \, \| \fy \|_{n + |\gamma+\kappa|} \max_{|\alpha+\beta| \leqs n} \sum_{\sigma \leqs \min( \beta,\gamma)} \binom{\beta}{\sigma} \\
& \leqs |\gamma|!  \, C_{n + |\gamma+\kappa|} \, \max_{|\alpha+\beta| \leqs n} 2^{|\beta|} \\
& \leqs |\gamma|! \, 2^n \, C_{n + |\gamma+\kappa|}, \quad \fy \in B.  
\end{align*}
\end{proof}

\begin{lem}\label{lem:boundedsetcoverS}
If $B \subseteq \cS(\rr d)$ is bounded and $\ep > 0$
then there exists $K \in \no$ and $\fy_j \in \cS(\rr d)$ for $1 \leqs j \leqs K$ such that 
\begin{equation*}
B \subseteq \bigcup_{j=1}^K B_\ep (\fy_j)
 \end{equation*}
where the open balls $B_\ep (\fy_j) \subseteq L^2(\rr d)$ refer to the $L^2$ norm. 
\end{lem}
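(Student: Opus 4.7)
The plan is to exploit the Hermite function characterization \eqref{eq:Ssequence} of $\cS(\rr d)$ together with a standard finite-dimensional truncation. For each $\fy \in B$ write the Hermite expansion $\fy = \sum_{\alpha \in \nn d} c_\alpha(\fy) h_\alpha$ with $c_\alpha(\fy) = (\fy, h_\alpha)$, and denote the truncation $P_N \fy = \sum_{|\alpha| \leqs N} c_\alpha(\fy) h_\alpha$.

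First I would use that $B$ is bounded in $\cS(\rr d)$, which by \eqref{eq:Ssequence} (Simon's identification) translates into uniform boundedness of the coefficient sequences in every space $\ell_r^2$. Fixing $r = 1$, set
\begin{equation*}
M := \sup_{\fy \in B} \sum_{\alpha \in \nn d} |c_\alpha(\fy)|^2 \eabs{\alpha}^2 < \infty.
\end{equation*}
Since $\{h_\alpha\}$ is an orthonormal basis of $L^2(\rr d)$, this gives the uniform tail estimate
\begin{equation*}
\| \fy - P_N \fy \|_{L^2}^2 = \sum_{|\alpha| > N} |c_\alpha(\fy)|^2 \leqs \eabs{N}^{-2} M, \qquad \fy \in B.
\end{equation*}
Choosing $N$ so that $\eabs{N}^{-2} M < \ep^2/4$, we obtain $\| \fy - P_N \fy \|_{L^2} < \ep/2$ uniformly for $\fy \in B$.

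Next I would handle the finite-dimensional piece. The subspace $V_N := \operatorname{span}\{ h_\alpha: |\alpha| \leqs N \} \subseteq \cS(\rr d)$ is finite-dimensional, and $B$ is bounded in $L^2(\rr d)$ (since the $L^2$ norm is one of the seminorms controlled by the $\cS$ topology, or equivalently by $M$). Hence $P_N(B) \subseteq V_N$ is a bounded subset of a finite-dimensional normed space, so it is totally bounded in the $L^2$ norm. Choose a finite collection $\fy_1, \ldots, \fy_K \in V_N \subseteq \cS(\rr d)$ with
\begin{equation*}
P_N(B) \subseteq \bigcup_{j=1}^K B_{\ep/2}(\fy_j).
\end{equation*}

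Finally, a triangle inequality concludes: given any $\fy \in B$, pick $j$ with $\| P_N \fy - \fy_j \|_{L^2} < \ep/2$; then
\begin{equation*}
\| \fy - \fy_j \|_{L^2} \leqs \| \fy - P_N \fy \|_{L^2} + \| P_N \fy - \fy_j \|_{L^2} < \ep.
\end{equation*}
This yields the desired cover $B \subseteq \bigcup_{j=1}^K B_\ep(\fy_j)$. There is no serious obstacle here; the content is simply that bounded subsets of $\cS(\rr d)$ are relatively compact (hence totally bounded) in $L^2(\rr d)$, which the Hermite expansion makes quantitative and transparent.
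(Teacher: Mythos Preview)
Your proof is correct and follows essentially the same approach as the paper: Hermite expansion, a uniform tail estimate using boundedness in $\ell_1^2$, and a finite-dimensional compactness argument for the truncated coefficients. The only cosmetic difference is that the paper splits $\ep^2$ as $\ep^2/2 + \ep^2/2$ on squared norms whereas you split $\ep$ as $\ep/2 + \ep/2$ via the triangle inequality.
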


\begin{proof}
We use the identification \eqref{eq:Ssequence} of $\cS(\rr d)$ as a projective limit of sequence spaces for Hermite series expansions. 
Then $L^2(\rr d)$ corresponds to $\ell^2(\nn d)$. 
We work on the side of the sequences $c = (c_\alpha)_{\alpha \in \nn d}$. 
Since $B \subseteq \cS (\rr d)$ is bounded there exists for each $r>0$ a bound $C_r > 0$ such that 
\begin{equation*}
\| c \|^2_{\ell_r^2} = \sum_{\alpha \in \nn d} |c_\alpha|^2 \eabs{ \alpha}^{2 r}
\leqs C_r^2, \quad c \in B. 
\end{equation*}
For $r=1$ and $N \in \no$ this gives
\begin{align*}
\sum_{\alpha \in \nn d, \, |\alpha| > N} |c_\alpha|^2 
& = \sum_{\alpha \in \nn d, \, |\alpha| > N} |c_\alpha|^2 \eabs{ \alpha}^{2 - 2} \\
& \leqs \eabs{N}^{- 2 } \sum_{\alpha \in \nn d} |c_\alpha|^2 \eabs{ \alpha}^{2}
\leqs C_1^2 \eabs{N}^{- 2}, \quad c \in B. 
\end{align*}
If we pick $N > 0$ sufficiently large we thus have 
\begin{equation}\label{eq:BlargeindicesS}
\sup_{c \in B} \sum_{\alpha \in \nn d, \, |\alpha| > N} |c_\alpha|^2 < \frac{\ep^2}{2}. 
\end{equation}

On the other hand we have 
\begin{equation*}
B_{(N)} := \left\{ \{ c_\alpha\}_{|\alpha| \leqs N} : \ \{ c_\alpha \}_{\alpha \in \nn d} \in B \right\} \subseteq \cc M
\end{equation*}
for some $M \in \no$, and 
\begin{equation*}
\sum_{|\alpha| \leqs N} |c_\alpha|^2 
\leqs \sum_{\alpha \in \nn d} |c_\alpha|^2 \eabs{ \alpha}^{2}
\leqs C_1^2, \quad c \in B,  
\end{equation*}
so $B_{(N)} \subseteq \overline{B}_{C_1} \subseteq \cc M$ where $B_{C_1}$ denotes the open ball in $\cc M$, considered as a Hilbert space, with radius $C_1 > 0$. 
By the compactness of its closure $\overline{B}_{C_1} \subseteq \cc M$ there exist $\{ c_j \}_{j=1}^K \subseteq \cc M$ such that 
\begin{equation}\label{eq:mindistS}
\min_{1 \leqs j \leqs K} \| c - c_j \|_{\ell_M^2}^2 < \frac{\ep^2}{2}, \quad c \in B_{(N)}.  
\end{equation}
We extend $c_j$ to elements in $\ell^2(\nn d)$ by zero-padding: 
\begin{equation*}
c_{j,\alpha} = 0, \quad |\alpha| > N, \quad 1 \leqs j \leqs K. 
\end{equation*}
Combining \eqref{eq:BlargeindicesS} and \eqref{eq:mindistS} gives 
\begin{equation*}
\min_{1 \leqs j \leqs K} \| c - c_j \|_{\ell^2 (\nn d)}^2 
= \min_{1 \leqs j \leqs K}  \sum_{|\alpha| \leqs N} |c_\alpha - c_{j,\alpha}|^2 +  \sum_{|\alpha| > N} |c_\alpha|^2
< \ep^2, \quad c \in B. 
\end{equation*}
Thus
\begin{equation*}
B \subseteq \bigcup_{j=1}^K B_\ep (c_j). 
\end{equation*}
\end{proof}

\begin{lem}\label{lem:strongcont}
If $B \subseteq \cS(\rr d)$ is bounded and $\alpha,\beta \in \nn d$ then
\begin{equation*}
\lim_{t \to 0^+} \sup_{\fy \in B} \| x^\alpha D^\beta (\cK_t -I) \fy \|_{L^2} = 0. 
\end{equation*}
\end{lem}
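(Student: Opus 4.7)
The plan is to combine the commutator identity from Lemma~\ref{lem:operatorcommutator} with the contraction property of $\cK_t$ on $L^2$, together with the bounded-set stability (Lemma~\ref{lem:boundedsetinvarianceS}) and the finite $L^2$-cover (Lemma~\ref{lem:boundedsetcoverS}).

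First I would write, using Lemma~\ref{lem:operatorcommutator},
\begin{align*}
x^\alpha D^\beta (\cK_t - I)\fy
&= \bigl(C_{\alpha,\beta}(t) - 1\bigr)\,\cK_t\, x^\alpha D^\beta \fy
 + \sum_{\substack{|\gamma+\kappa|\leqs |\alpha+\beta|\\ (\gamma,\kappa)\neq(\alpha,\beta)}} C_{\gamma,\kappa}(t)\,\cK_t\,x^\gamma D^\kappa \fy \\
&\quad + (\cK_t - I)\,x^\alpha D^\beta \fy.
\end{align*}
Since $\cK_t$ is an $L^2$-contraction ($\| \cK_t\|\leqs 1$), the first two terms are bounded in $L^2$ by $|C_{\alpha,\beta}(t)-1|\cdot\|x^\alpha D^\beta \fy\|_{L^2}$ and by $\sum|C_{\gamma,\kappa}(t)|\cdot\|x^\gamma D^\kappa \fy\|_{L^2}$ respectively. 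By Lemma~\ref{lem:boundedsetinvarianceS} each set $\{x^\gamma D^\kappa \fy:\fy\in B\}$ is bounded in $\cS(\rr d)$, hence $\sup_{\fy\in B}\|x^\gamma D^\kappa \fy\|_{L^2}<\infty$. The limits \eqref{eq:tlimits} then force these two terms to tend to zero uniformly over $\fy\in B$ as $t\to 0+$.

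The main obstacle is the third term, $\sup_{\fy\in B}\|(\cK_t-I) x^\alpha D^\beta \fy\|_{L^2}$: strong $L^2$-continuity of $\cK_t$ at $t=0$ only gives pointwise convergence, not uniform convergence over the set $B':=\{x^\alpha D^\beta \fy:\fy\in B\}$. To upgrade to uniformity I would use Lemma~\ref{lem:boundedsetinvarianceS} to conclude $B'$ is bounded in $\cS$, and then Lemma~\ref{lem:boundedsetcoverS}: given $\ep>0$, pick $\psi_1,\dots,\psi_K\in\cS(\rr d)$ with $B'\subseteq \bigcup_{j=1}^K B_\ep(\psi_j)$ in $L^2$. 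For any $\psi\in B'$, choosing $j$ with $\|\psi-\psi_j\|_{L^2}<\ep$ and using $\|\cK_t-I\|\leqs 2$ yields
\begin{equation*}
\|(\cK_t-I)\psi\|_{L^2}\leqs \|(\cK_t-I)(\psi-\psi_j)\|_{L^2}+\|(\cK_t-I)\psi_j\|_{L^2}\leqs 2\ep+\max_{1\leqs j\leqs K}\|(\cK_t-I)\psi_j\|_{L^2}.
\end{equation*}
Strong continuity of $\cK_t$ on $L^2$ at $t=0$ (from H\"ormander's result, recalled in Section~\ref{sec:evolution}) applied to the finite collection $\psi_1,\dots,\psi_K$ makes the maximum smaller than $\ep$ for $t$ sufficiently small, so $\limsup_{t\to 0+}\sup_{\psi\in B'}\|(\cK_t-I)\psi\|_{L^2}\leqs 3\ep$. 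Since $\ep>0$ is arbitrary, this term vanishes in the limit as well, completing the proof.
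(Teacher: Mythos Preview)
Your proof is correct and follows essentially the same route as the paper: the commutator identity from Lemma~\ref{lem:operatorcommutator}, the $L^2$-contraction property, Lemma~\ref{lem:boundedsetinvarianceS} for uniform $L^2$-bounds on $x^\gamma D^\kappa \fy$, and the finite $L^2$-cover from Lemma~\ref{lem:boundedsetcoverS} to upgrade pointwise strong continuity to uniformity over $B$. Your decomposition differs cosmetically (you split off $(\cK_t-I)x^\alpha D^\beta\fy$ with coefficient $1$ and put $C_{\alpha,\beta}(t)-1$ in front of $\cK_t x^\alpha D^\beta\fy$, whereas the paper puts $C_{\alpha,\beta}(t)$ in front of $(\cK_t-I)x^\alpha D^\beta\fy$ and $C_{\alpha,\beta}(t)-1$ in front of $x^\alpha D^\beta\fy$), but the two are algebraically identical and the remainder of the argument is the same.
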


\begin{proof}
From Lemma \ref{lem:operatorcommutator} we obtain for $\fy \in \cS$
\begin{align*}
x^\alpha D^\beta (\cK_t -I) \fy 
& = C_{\alpha,\beta} (t) (\cK_t -I) x^\alpha D^\beta \fy + ( C_{\alpha,\beta} (t) - 1) x^\alpha D^\beta \fy \\
& \qquad + \sum_{\stackrel{|\gamma + \kappa| \leqs |\alpha+\beta|}{(\gamma,\kappa) \neq (\alpha,\beta)}} C_{\gamma,\kappa} (t) \cK_t \, x^\gamma D^\kappa \fy
\end{align*}
where \eqref{eq:tlimits} holds. 
The contraction property of $\cK_t$ acting on $L^2$ yields for $0 < t \leqs 1$
\begin{equation}\label{eq:smalltimeestimateS}
\begin{aligned}
\| x^\alpha D^\beta (\cK_t -I) \fy \|_{L^2}
& \leqs C \| (\cK_t -I) x^\alpha D^\beta \fy \|_{L^2} + |C_{\alpha,\beta} (t) - 1| \, \| x^\alpha D^\beta \fy \|_{L^2} \\
& \qquad + \sum_{\stackrel{|\gamma + \kappa| \leqs |\alpha+\beta|}{(\gamma,\kappa) \neq (\alpha,\beta)}} |C_{\gamma,\kappa} (t)| \, \| x^\gamma D^\kappa \fy \|_{L^2}
\end{aligned}
\end{equation}
where $C>0$.

Let $\ep > 0$. 
By Lemmas \ref{lem:boundedsetinvarianceS} and \ref{lem:boundedsetcoverS} there exists $K \in \no$ and $\fy_j \in \cS(\rr d)$, $1 \leqs j \leqs K$, such that 
\begin{equation*}
\min_{1 \leqs j \leqs K} \| x^\alpha D^\beta \fy - \fy_j \|_{L^2} < \frac{\ep}{8 C}, \quad \fy \in B. 
\end{equation*}

Next we use two properties of $\cK_t$ acting on $L^2$: the contraction property and the strong continuity. 
This gives for $0 < t \leqs \delta$
\begin{equation}\label{eq:firsttermS}
\begin{aligned}
\| (\cK_t -I) x^\alpha D^\beta \fy \|_{L^2} 
& = \min_{1 \leqs j \leqs K} \| (\cK_t -I) (x^\alpha D^\beta \fy - \fy_j + \fy_j) \|_{L^2} \\
& \leqs \min_{1 \leqs j \leqs K} \left( 2 \| x^\alpha D^\beta \fy - \fy_j\|_{L^2} + \| (\cK_t -I) \fy_j \|_{L^2} \right) \\
& \leqs \frac{\ep}{4 C} + \frac{\ep}{4 C} =  \frac{\ep}{2 C}, \quad \fy \in B, 
\end{aligned}
\end{equation}
provided $\delta > 0$ is sufficiently small. 

In the next step we use the seminorms \eqref{eq:seminormsS1} for $\cS$ and \eqref{eq:boundedsetS1}. 
We also use
\begin{equation}\label{eq:bracketdsquare}
\eabs{x}^{2d} = ( 1 + x_1^2 + \cdots  + x_d^2)^d = \sum_{|\sigma| \leqs d} C_\sigma x^{2 \sigma}
\end{equation}
where $C_\sigma > 0$ are constants. 
Thus we obtain for $|\gamma + \kappa| \leqs |\alpha+\beta|$
\begin{equation}\label{eq:secondtermS}
\| x^\gamma D^\kappa \fy \|_{L^2}^2
= \sum_{|\sigma| \leqs d} C_\sigma \int_{\rr d} \eabs{x}^{- 2d} | x^{\sigma+\gamma} D^\kappa \fy  (x)|^2 \, \dd x 
\leqs D_1^2 \, C_{|\alpha+\beta|+d}^2 , \quad \fy \in B, 
\end{equation}
for some $D_1 > 0$. 

Finally we insert \eqref{eq:firsttermS} and \eqref{eq:secondtermS} into \eqref{eq:smalltimeestimateS}. 
We obtain then for $0 < t \leqs \delta$, again after possibly decreasing $\delta > 0$, 
\begin{align*}
& \| x^\alpha D^\beta (\cK_t -I) \fy \|_{L^2} \\
& \leqs C \| (\cK_t -I) x^\alpha D^\beta \fy \|_{L^2} + D_1 C_{|\alpha+\beta|+d} 
\, \left( |C_{\alpha,\beta} (t) - 1| 
+ \sum_{\stackrel{|\gamma + \kappa| \leqs |\alpha+\beta|}{(\gamma,\kappa) \neq (\alpha,\beta)}} |C_{\gamma,\kappa} (t)| \, 
 \right)  \\
& \leqs \frac{\ep}{2} + \frac{\ep}{2} = \ep, \quad \fy \in B. 
\end{align*}
Since $\ep > 0$ is arbitrary this proves the claim. 
\end{proof}

Lemma \ref{lem:operatorcommutator} is useful in order to understand the behavior of the propagator $\cK_t$ as $t \to 0^+$, witness Lemma \ref{lem:strongcont}. 
We will prove more results in this direction further on, see Theorems \ref{thm:semigroupMs} and \ref{thm:strongcontGS}.

%%%%%%%%%%%%%%%%%%%%%%%%%%%%%%%%%%%%%%%%%
\section{Strong continuity on Hilbert modulation spaces and tempered distributions}
\label{sec:strongcontmodulation}
%%%%%%%%%%%%%%%%%%%%%%%%%%%%%%%%%%%%%%%%%

In this section we prove that $\cK_t$ is a strongly continuous semigroup in several subspaces of the tempered distributions: $M_s^2(\rr d)$ for any $s \in \ro$, 
the Schwartz space $\cS(\rr d)$, and $\cS'(\rr d)$ equipped with either the weak$^*$ or the strong topology. 
In the case of $\cS'(\rr d)$ equipped with the strong topology, we show that the semigroup is locally equicontinuous. 

We need the following tool in the proof of Theorem \ref{thm:semigroupMs}. 

\begin{lem}\label{lem:boundedpropagator}
Let $s \in \ro$ and $T>0$. 
The propagator $\cK_t$ is bounded on $M_s^2(\rr d)$ uniformly over $0 \leqs t \leqs T$. 
\end{lem}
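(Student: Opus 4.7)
The plan is to treat three cases and combine them: non-negative even integer orders $s = 2N$ reached directly through Lemma \ref{lem:operatorcommutator}, negative even integer orders $s = -2N$ obtained by $L^2$-duality, and general $s \in \ro$ obtained by complex interpolation on the Hilbertian Shubin--Sobolev scale.

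For $s = 2N$ with $N \in \no$, I would use the standard equivalent norm
\begin{equation*}
\| u \|_{M_{2N}^2}^2 \asymp \sum_{|\alpha + \beta| \leqs 2N} \| x^\alpha D^\beta u \|_{L^2}^2
\end{equation*}
on the Shubin--Sobolev space of non-negative integer order. Applied to $\fy \in \cS(\rr d)$, Lemma \ref{lem:operatorcommutator} combined with the $L^2$-contraction property of $\cK_t$ yields
\begin{equation*}
\| x^\alpha D^\beta \cK_t \fy \|_{L^2} \leqs \sum_{|\gamma + \kappa| \leqs |\alpha + \beta|} |C_{\gamma,\kappa}(t)| \, \| x^\gamma D^\kappa \fy \|_{L^2},
\end{equation*}
and since the continuous coefficients $C_{\gamma,\kappa}$ are uniformly bounded on the compact interval $[0,T]$, summing over $|\alpha + \beta| \leqs 2N$ produces $\| \cK_t \fy \|_{M_{2N}^2} \leqs C(T,N) \| \fy \|_{M_{2N}^2}$ for every $\fy \in \cS(\rr d)$. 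Density of $\cS(\rr d)$ in $M_{2N}^2(\rr d)$, together with continuity of $\cK_t$ on $\cS'(\rr d)$, would then extend the estimate to all of $M_{2N}^2$.

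For $s = -2N$ I would invoke the adjoint relation $\cK_t^* = \cK_{e^{-2 i t \overline F}}$ recorded in \eqref{eq:L2adjoint}. Since $\overline F = \J \, \overline Q$ and $\re \overline Q = \re Q \geqs 0$, the adjoint $\cK_t^*$ is a propagator of the same class, so the previous step yields uniform boundedness of $\cK_t^*$ on $M_{2N}^2$. Through the $L^2$-pairing identifying $(M_{2N}^2)'$ with $M_{-2N}^2$, this dualizes to the required bound on $M_{-2N}^2$. For an arbitrary $s \in \ro$ I would pick $N \in \no$ with $-2N \leqs s \leqs 2N$ and invoke complex interpolation $(M_{-2N}^2, M_{2N}^2)_\theta = M_s^2$ to conclude. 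The most delicate point is the density argument in the first stage: the bound for Schwartz $\fy$ must be transferred to general $u \in M_{2N}^2$, which relies on weak-$^*$ continuity of the inclusion $M_{2N}^2 \hookrightarrow \cS'$ so that the $M_{2N}^2$-limit of $\cK_t \fy_n$ (for $\fy_n \to u$ in $M_{2N}^2$) is identified with $\cK_t u$ via uniqueness of limits in $\cS'$.
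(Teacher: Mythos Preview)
Your proposal is correct and follows essentially the same route as the paper: the commutator Lemma~\ref{lem:operatorcommutator} plus the $L^2$-contraction for non-negative integer order, the adjoint relation $\cK_t^* = \cK_{e^{-2it\overline F}}$ for negative integer order, and complex interpolation on the $M_s^2$ scale for general $s$. The paper simply reverses the order of presentation (interpolating first to reduce to $s=k\in\zo$, then treating $k\geqs 0$ and $k<0$), and works with all integers rather than only even ones; your extra care about the density step is not spelled out in the paper, which applies the operator identity of Lemma~\ref{lem:operatorcommutator} directly.
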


\begin{proof}
By \cite[Theorem~4.5]{Feichtinger1} (cf. \cite[Proposition~1.2]{Holst1}) the modulation spaces are closed under complex interpolation of Banach spaces \cite{Bergh1}. 
We may thus assume that $s=k \in \zo$. 
Suppose $k \geqs 0$. By \cite[Theorem~2.1.12]{Nicola1} 
\begin{equation}\label{eq:Qknorm}
\| u \| = \sum_{|\alpha + \beta| \leqs k} \| x^\alpha D^\beta u \|_{L^2}
\end{equation}
is a norm on $M_k^2(\rr d)$ that is equivalent to \eqref{eq:Qsnorm}. 

From Lemma \ref{lem:operatorcommutator} and the contraction property of $\cK_t$ we obtain
\begin{align*}
\| \cK_t u \|_{M_k^2}
\asymp \sum_{|\alpha + \beta| \leqs k} \| x^\alpha D^\beta \cK_t u \|_{L^2}
& \leqs \sum_{|\alpha + \beta| \leqs k} \, \sum_{|\gamma + \kappa| \leqs |\alpha+\beta|} | C_{\gamma,\kappa} (t) | \, \|  x^\gamma D^\kappa u \|_{L^2} \\
& \lesssim \sum_{|\alpha + \beta| \leqs k}  \|  x^\alpha D^\beta u \|_{L^2} \asymp \| u \|_{M_k^2}, 
\end{align*}
in the last inequality using the consequence of Lemma \ref{lem:operatorcommutator} that the functions $C_{\gamma,\kappa}$ are continuous and therefore uniformly bounded with respect to $t \in [0,T]$.  
This proves the lemma when $k \geqs 0$. 

If $k < 0$ we use duality. In fact the dual of $M_k^2$ can be identified with $M_{-k}^2$ with respect to an extension of the $L^2$ inner product \cite{Feichtinger1}, \cite[Theorem~11.3.6]{Grochenig1}. 
We also use the expression of the adjoint of $\cK_t$ as $\cK_t^* = \cK_{e^{- 2 i t \overline{F}}}$, cf. \eqref{eq:L2adjoint}. 
By the result above we have 
\begin{equation*}
\| \cK_{e^{- 2 i t \overline{F}}} u \|_{M_{-k}^2} \lesssim  \| u \|_{M_{-k}^2}, \quad 0 \leqs t \leqs T,
\end{equation*}
which gives
\begin{align*}
\| \cK_t u \|_{M_k^2}
& = \sup_{\| g \|_{M_{-k}^2} \leqs 1} \left| \left(  \cK_t u, g \right) \right|
= \sup_{\| g \|_{M_{-k}^2} \leqs 1} \left| \left(  u, \cK_{e^{- 2 i t \overline{F}}} g \right)\right| \\
& \leqs \| u \|_{M_k^2} \ \sup_{\| g \|_{M_{-k}^2} \leqs 1} \| \cK_{e^{- 2 i t \overline{F}}} g \|_{M_{-k}^2}  
\lesssim  \| u \|_{M_k^2}, \quad 0 \leqs t \leqs T. 
\end{align*}
\end{proof}

\begin{thm}\label{thm:semigroupMs}
Let $s \in \ro$. 
The propagator $\cK_t = e^{- t q^w (x,D)}$ is for $t \geqs 0$ a strongly continuous semigroup on $M_s^2(\rr d)$. 
\end{thm}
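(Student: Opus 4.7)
The plan is to verify the two defining conditions of a strongly continuous semigroup on $M_s^2(\rr d)$: the algebraic semigroup identity and strong continuity at $t=0$.

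First I would dispose of the algebraic properties. The identities $\cK_0 = I$ and $\cK_t \cK_\tau = \cK_{t+\tau}$ hold on $L^2$ by the results of H\"ormander cited in Section \ref{sec:evolution}, hence they hold on the dense subspace $\cS(\rr d) \subseteq L^2$. Since each $\cK_t$ is bounded on $M_s^2$ by Lemma \ref{lem:boundedpropagator} and $\cS(\rr d) \subseteq M_s^2(\rr d)$ is dense by \eqref{eq:shubinsobolevspaces}, these identities extend by continuity to all of $M_s^2(\rr d)$.

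The main task is to show $\lim_{t \to 0+} \| \cK_t u - u \|_{M_s^2} = 0$ for every $u \in M_s^2(\rr d)$. Using the uniform bound $\| \cK_t \|_{M_s^2 \to M_s^2} \leqs C$ for $0 \leqs t \leqs 1$ from Lemma \ref{lem:boundedpropagator}, a standard three-term approximation combined with density of $\cS(\rr d)$ in $M_s^2(\rr d)$ reduces the claim to the case $u = \fy \in \cS(\rr d)$. For the Schwartz case, I would pick $k \in \no$ with $k \geqs s$; by \eqref{eq:shubinsobolevnested} one has the continuous embedding $M_k^2(\rr d) \hookrightarrow M_s^2(\rr d)$, so
\begin{equation*}
\| \cK_t \fy - \fy \|_{M_s^2} \lesssim \| \cK_t \fy - \fy \|_{M_k^2}.
\end{equation*}
On $M_k^2$ the equivalent norm \eqref{eq:Qknorm} used in the proof of Lemma \ref{lem:boundedpropagator} yields
\begin{equation*}
\| \cK_t \fy - \fy \|_{M_k^2} \asymp \sum_{|\alpha+\beta| \leqs k} \| x^\alpha D^\beta (\cK_t - I) \fy \|_{L^2},
\end{equation*}
and each summand tends to zero as $t \to 0+$ by Lemma \ref{lem:strongcont} applied to the singleton bounded set $B = \{ \fy \}$.

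The essential obstacle has already been overcome in Lemma \ref{lem:strongcont}, whose proof uses the commutator identity of Lemma \ref{lem:operatorcommutator} to convert the $L^2$ strong continuity of the propagator into convergence with respect to weighted $L^2$ seminorms. Once this technical input is granted, the present theorem is a density-and-embedding reduction; in particular, no duality argument is required, because for arbitrary $s \in \ro$ one can always dominate $\| \cdot \|_{M_s^2}$ by $\| \cdot \|_{M_k^2}$ for some nonnegative integer $k \geqs s$, thereby handling negative indices as well.
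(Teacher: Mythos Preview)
Your argument is correct and, for the strong continuity part, essentially identical to the paper's: reduce to $\fy \in \cS$ by density plus the uniform $M_s^2$-bound of Lemma \ref{lem:boundedpropagator}, then control $\|(\cK_t - I)\fy\|_{M_s^2}$ via the embedding $M_k^2 \hookrightarrow M_s^2$ for an integer $k \geqs s$, the equivalent norm \eqref{eq:Qknorm}, and Lemma \ref{lem:strongcont}. (A small inaccuracy: \eqref{eq:shubinsobolevspaces} gives only the set-theoretic inclusion $\cS \subseteq M_s^2$; the density you invoke is the separate fact cited in the paper from \cite[Proposition~11.3.4]{Grochenig1}.)

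The one genuine difference is your handling of the algebraic semigroup identities. You extend $\cK_{t_1+t_2} = \cK_{t_1}\cK_{t_2}$ and $\cK_0 = I$ from $\cS$ to all of $M_s^2$ by density and the boundedness of each $\cK_t$ on $M_s^2$; this works uniformly for all $s \in \ro$. The paper instead splits cases: for $s \geqs 0$ it uses $M_s^2 \subseteq L^2$ directly, while for $s < 0$ it passes to the adjoint $\cK_t^* = \cK_{e^{-2it\overline F}}$ and the $L^2$ semigroup law via the $M_{-s}^2$--$M_s^2$ duality. Your route is cleaner and avoids the case distinction; the paper's duality argument, on the other hand, makes explicit contact with the adjoint structure that is reused later (e.g.\ in extending $\cK_t$ to $\cS'$). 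Both are valid.
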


\begin{proof}
By Lemma \ref{lem:boundedpropagator} the operators $\cK_t$ are bounded on $M_s^2$, uniformly over $t \in [0,T]$ for any $T>0$. 
Pick $k \in \no$ such that $k \geqs s$. 
For any $\fy \in \cS(\rr d)$ we obtain from \eqref{eq:shubinsobolevnested}, using the norm \eqref{eq:Qknorm} on $M_k^2$, and  Lemma \ref{lem:strongcont}
\begin{equation*}
\| (\cK_t -I) \fy \|_{M_s^2}
\lesssim \| (\cK_t -I) \fy \|_{M_k^2}
\asymp \sum_{|\alpha + \beta| \leqs k} \| x^\alpha D^\beta (\cK_t -I) \fy \|_{L^2} \longrightarrow 0, \quad t \to 0^+. 
\end{equation*}
Since $\cS \subseteq M_s^2$ is dense \cite[Proposition~11.3.4]{Grochenig1}, we may combine this find, Lemma \ref{lem:boundedpropagator}
and \cite[Proposition~I.5.3]{Engel1}. 
The conclusion of the latter result is then the strong continuity of $\cK_t$ on $M_s^2(\rr d)$. 

Finally we consider the semigroup property.  
If $s \geqs 0$ then $M_s^2 \subseteq L^2$. 
Thus $\cK_0 = I$ and $\cK_{t_1 + t_2} = \cK_{t_1} \cK_{t_2}$ hold on $M_s^2(\rr d)$ due to the corresponding properties on $L^2$. 
If $s < 0$ then let $u \in M_s^2(\rr d)$ and let $t_1, t_2 \geqs 0$. 
From the extension of \eqref{eq:L2adjoint} to the duality on $M_{-s}^2 \times M_{s}^2$ we have 
for $\fy \in \cS(\rr d)$
\begin{equation*}
(  (\cK_{t_1 + t_2} - \cK_{t_1} \cK_{t_2}) u, \fy) = (u, (\cK_{t_1 + t_2}^* - \cK_{t_2}^* \cK_{t_1}^* ) \fy ) = 0
\end{equation*}
due to the semigroup property $\cK_{t_1 + t_2}^* = \cK_{t_2}^* \cK_{t_1}^*$ when the action refers to $L^2$. 
This proves the semigroup property $\cK_{t_1 + t_2} = \cK_{t_1} \cK_{t_2}$ for action on $M_s^2(\rr d)$, 
and likewise $\cK_0 = I$ on $M_s^2(\rr d)$. 
\end{proof}

\begin{cor}\label{cor:propagatorschwartz}
The propagator $\cK_t$ is for $t \geqs 0$ a locally equicontinuous strongly continuous semigroup on $\cS(\rr d)$. 
\end{cor}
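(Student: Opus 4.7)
The plan is to reduce every assertion to work already completed for the Hilbert modulation spaces, by exploiting the fact (recalled just after \eqref{eq:seminormsS2}) that the family of seminorms $\fy \mapsto \| \fy \|_{M_s^2}$, $s \geqs 0$, is equivalent to the standard Schwartz seminorms \eqref{eq:seminormsS1}. Consequently, every topological statement about $\cK_t$ on $\cS(\rr d)$ can be verified by testing against the $M_s^2$-norms for $s \geqs 0$ only.

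Continuity of $\cK_t$ on $\cS(\rr d)$ for each fixed $t \geqs 0$ is the content of \cite[Proposition~5.8]{Hormander2}, as already recalled in Section \ref{sec:evolution}. The semigroup identities $\cK_0 = I$ and $\cK_{t_1+t_2} = \cK_{t_1}\cK_{t_2}$ are inherited from their validity on $L^2$ (they were, in fact, already established on every $M_s^2$ in Theorem \ref{thm:semigroupMs}) by restriction, using $\cS \subseteq L^2$.

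For local equicontinuity, fix $t_0 > 0$ and a seminorm on $\cS(\rr d)$; by the equivalence just recalled it suffices to take seminorms of the form $p(\fy) = \| \fy \|_{M_s^2}$ with $s \geqs 0$. Lemma \ref{lem:boundedpropagator}, applied with $T = t_0$, yields a constant $C_{s,t_0} > 0$ such that
\begin{equation*}
\| \cK_t \fy \|_{M_s^2} \leqs C_{s,t_0} \| \fy \|_{M_s^2}, \quad \fy \in \cS(\rr d), \quad 0 \leqs t \leqs t_0,
\end{equation*}
which is exactly the form of the local equicontinuity estimate, with $q$ a constant multiple of $p$ itself.

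Finally, for strong continuity, let $\fy \in \cS(\rr d)$ and $s \geqs 0$. Since $\fy \in M_s^2(\rr d)$, Theorem \ref{thm:semigroupMs} gives $\| (\cK_t - I)\fy \|_{M_s^2} \to 0$ as $t \to 0+$. As this holds for every $s \geqs 0$, it amounts to convergence in each of the seminorms defining the topology of $\cS(\rr d)$, so $\cK_t \fy \to \fy$ in $\cS(\rr d)$. No genuine obstacle remains: the entire substance of the corollary has been absorbed into Lemma \ref{lem:boundedpropagator} (for the uniform bound) and Theorem \ref{thm:semigroupMs} (for the strong continuity on each $M_s^2$), combined with the equivalence of the two Schwartz seminorm systems.
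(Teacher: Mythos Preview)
Your proof is correct and follows essentially the same route as the paper: both use the $M_s^2$-seminorm system \eqref{eq:seminormsS2} on $\cS$, obtain continuity and local equicontinuity from Lemma~\ref{lem:boundedpropagator}, strong continuity from (the proof of) Theorem~\ref{thm:semigroupMs}, and the semigroup identities by restriction from $L^2$. The only cosmetic difference is that you cite \cite[Proposition~5.8]{Hormander2} for continuity of each $\cK_t$ on $\cS$, whereas the paper simply reads it off from the local equicontinuity bound itself.
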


\begin{proof}
We use the seminorms \eqref{eq:seminormsS2} on $\cS(\rr d)$. 
The continuity of $\cK_t$ on $\cS$ follows from Lemma \ref{lem:boundedpropagator}, 
as well as the local equicontinuity. 
The strong continuity is a consequence of the proof of Theorem \ref{thm:semigroupMs}. 
Finally the semigroup property $\cK_{t_1 + t_2} = \cK_{t_1} \cK_{t_2}$ for $t_1, t_2 \geqs 0$, and $\cK_0 = I$, 
are immediate consequences of the corresponding properties for the semigroup acting on $L^2$. 
\end{proof}

The generator of the semigroup $\cK_t$ acting on $M_s^2(\rr d)$ 
according to Theorem \ref{thm:semigroupMs} is 
\begin{equation}\label{eq:generatorMs}
A_s f = \lim_{h \to 0^+} 
h^{-1} \left( \cK_h - I \right) f
\end{equation}
for all $f \in M_s^2(\rr d)$ such that the right-hand side limit exists in $M_s^2(\rr d)$ \cite{Pazy1}.
The linear space of all such $f \in M_s^2(\rr d)$ is the domain of $A_s$ denoted $D(A_s) \subseteq M_s^2(\rr d)$. 
For each $s \in \ro$ the operator $A_s$ equipped with the domain $D(A_s)$ is an unbounded linear operator in $M_s^2(\rr d)$. 
The domain $D(A_s)$ is dense in $M_s^2(\rr d)$ and the operator $A_s$ is closed \cite[Corollary~I.2.5]{Pazy1}. 

It follows from \eqref{eq:shubinsobolevnested} that $D(A_{s_2}) \subseteq D(A_{s_1})$ if $s_1 \leqs s_2$ and $A_{s_2} f = A_{s_1} f$ if $f \in D(A_{s_2})$. 
Thus we have for $0 \leqs s_1 \leqs s_2$
\begin{equation}\label{eq:operatorsnested}
A_{s_2} \subseteq A_{s_1} \subseteq - q^w(x,D) \subseteq A_{-s_1} \subseteq A_{-s_2}
\end{equation}
where $- q^w(x,D) = A_0$ denotes the generator of the semigroup $\cK_t$ on $L^2$. 

According to Corollary \ref{cor:propagatorschwartz} the propagator $\cK_t$ is also a locally equicontinuous strongly continuous semigroup on $\cS$. 
The generator of the semigroup $\cK_t$ acting on $\cS$ is 
\begin{equation}\label{eq:generatorS}
A f = \lim_{h \to 0^+} h^{-1} \left( \cK_h - I \right)f
\end{equation}
for all $f \in \cS$ such that the limit is well defined in $\cS$. 
The space of such $f$ is the domain denoted $D(A) \subseteq \cS$. 
According to \cite[Propositions~1.3 and 1.4]{Komura1} $A$ is a closed linear operator and $D(A) \subseteq \cS$ is dense (cf. Remark \ref{rem:SCLECauchy}).  

Let $f \in D(A)$ and let $s \geqs 0$. Then \eqref{eq:generatorS} converges in $\cS$ and therefore also in $M_s^2$, to the same element in $M_s^2$.
Thus $f \in D(A_s)$, so this means that $D(A) \subseteq D(A_s)$ and $A \subseteq A_s$.  
In particular for $s=0$ we have $A f = - q^w(x,D) f$ if $f \in D(A) \subseteq \cS$. 
By \cite[p.~178]{Shubin1} $q^w(x,D)$ is continuous on $\cS$. 
Since $A$ is closed and $D(A) \subseteq \cS$ is dense we must have $D(A) = \cS$. 
Combined with \eqref{eq:operatorsnested} his yields
\begin{equation*}
\cS = D(A) \subseteq \bigcap_{s \in \ro} D(A_s). 
\end{equation*}
If $f \in \bigcap_{s \in \ro} D(A_s)$ then \eqref{eq:generatorS} converges in $\cS$ 
so $f \in D(A) = \cS$ and we can strengthen the inclusion into
\begin{equation}\label{eq:equalityS}
\cS = \bigcap_{s \in \ro} D(A_s). 
\end{equation}

It follows from above that $A$ is continuous on $\cS$. 
We can thus extend $A$ uniquely to $\cS'$, using its formal $L^2$ adjoint $A^* = - \overline{q}^w(x,D)$ acting on $\cS$, by 
\begin{equation}\label{eq:Aextension}
(A u,\fy) = (u, A^* \fy), \quad u \in \cS'(\rr d), \quad \fy \in \cS(\rr d).  
\end{equation}
The extension is continuous on $\cS'$ equipped with its weak$^*$ topology. 

\begin{lem}\label{lem:domaininclusion}
For each $s \in \ro$ we have $M_{s+2}^2(\rr d) \subseteq D(A_s)$. 
\end{lem}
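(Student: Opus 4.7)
The plan is to reduce the claim to a closed-graph style approximation argument, exploiting three ingredients that have already been put in place: (i) the Weyl operator $q^w(x,D) \in \Gamma^2$ acts continuously $M_{s+2}^2(\rr d) \to M_s^2(\rr d)$ by \eqref{eq:shubincontMs}; (ii) the Schwartz space $\cS(\rr d)$ is dense in $M_{s+2}^2(\rr d)$; and (iii) the generator $A_s$ is closed in $M_s^2(\rr d)$, while the identification $D(A) = \cS(\rr d)$ with $A\fy = -q^w(x,D)\fy$, combined with $A \subseteq A_s$, yields $A_s \fy = -q^w(x,D)\fy$ for every $\fy \in \cS(\rr d)$.

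Given $f \in M_{s+2}^2(\rr d)$, I would choose by density a sequence $(\fy_n) \subseteq \cS(\rr d)$ with $\fy_n \to f$ in $M_{s+2}^2(\rr d)$. The continuous embedding $M_{s+2}^2(\rr d) \hookrightarrow M_s^2(\rr d)$ from \eqref{eq:shubinsobolevnested} gives $\fy_n \to f$ in $M_s^2(\rr d)$ as well. Simultaneously, applying the continuity of $q^w(x,D): M_{s+2}^2(\rr d) \to M_s^2(\rr d)$ yields
\begin{equation*}
A_s \fy_n = -q^w(x,D) \fy_n \longrightarrow -q^w(x,D) f \quad \text{in } M_s^2(\rr d).
\end{equation*}

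Because $A_s$ is closed, the two convergences $\fy_n \to f$ and $A_s \fy_n \to -q^w(x,D) f$ in $M_s^2(\rr d)$ force $f \in D(A_s)$ with $A_s f = -q^w(x,D) f$. This establishes the inclusion $M_{s+2}^2(\rr d) \subseteq D(A_s)$. I do not anticipate any genuine obstacle: every nontrivial step, namely the identification of $A$ on Schwartz functions as $-q^w(x,D)$ and the closedness of $A_s$, is already available from the material preceding the lemma; the present argument is essentially the standard extension-by-closure from a core to a continuously embedded subspace.
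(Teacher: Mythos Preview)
Your argument is correct and follows essentially the same route as the paper: approximate $f\in M_{s+2}^2$ by Schwartz functions, use $A_s\fy=-q^w(x,D)\fy$ on $\cS$ together with the continuity $q^w(x,D):M_{s+2}^2\to M_s^2$, and close up via the closedness of $A_s$. The only cosmetic difference is that the paper phrases the convergence of $A_s\fy_n$ as a Cauchy-sequence argument before invoking closedness, whereas you identify the limit $-q^w(x,D)f$ directly.
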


\begin{proof}
Since $q \in \Gamma^2$ we have by \eqref{eq:shubincontMs} for any $s \in \ro$
\begin{equation}\label{eq:contshubin}
\| q^w(x,D) f \|_{M_s^2} \lesssim \| f \|_{M_{s+2}^2}, \quad f \in \cS. 
\end{equation}
Let $f \in M_{s+2}^2 (\rr d)$. Since $\cS \subseteq M_{s+2}^2$ is a dense subspace \cite[Proposition~11.3.4]{Grochenig1} there exists a sequence $(f_n)_{n \geqs 1} \subseteq \cS$ such that $f_n \to f$ in $M_{s+2}^2$ as $n \to \infty$. 
By \eqref{eq:shubinsobolevnested} this implies that 
\begin{equation}\label{eq:limitMs}
f_n \to f \quad \mbox{in} \quad M_{s}^2 \quad \mbox{as} \quad n \to \infty. 
\end{equation}
From \eqref{eq:equalityS} we know that $\cS \subseteq D(A_s) \bigcap D(q^w(x,D) )$ and hence using \eqref{eq:contshubin} we obtain
\begin{equation*}
\| A_s (f_n - f_m) \|_{M_s^2} = \| q^w(x,D) (f_n - f_m) \|_{M_s^2} 
\lesssim \| f_n - f_m \|_{M_{s+2}^2} 
\end{equation*}
for $n,m \geqs 1$. 
Thus $( A_s f_n )_{n \geqs 1}$ is a Cauchy sequence in $M_s^2$,    
which converges to an element $g \in M_s^2$.  
If we combine $A_s f_n \to g$ in $M_s^2$ as $n \to \infty$ with \eqref{eq:limitMs} and the fact that $A_s$ is closed, 
we may conclude that $f \in D(A_s)$ and $A_s f = g$.
Hence $M_{s+2}^2 (\rr d) \subseteq D(A_s)$. 
\end{proof}

When we consider the equation \eqref{eq:CP} in $M_s^2(\rr d)$, we identify $A_s = - q^w(x,D)$. 

\begin{cor}\label{cor:uniquenessCP}
Let $s \in \ro$ and consider the Cauchy problem \eqref{eq:CP} in $M_s^2(\rr d)$. 
If $u_0 \in M_{s+2}^2(\rr d)$ then $\cK_t u_0$ is the unique solution in $C^1 ( [0,\infty), M_s^2)$.  
\end{cor}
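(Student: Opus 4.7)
The plan is to reduce the statement to the abstract $C_0$-semigroup Cauchy problem theory on the Banach space $M_s^2(\rr d)$. By \thmref{thm:semigroupMs} the propagator $\cK_t$ is a strongly continuous semigroup on $M_s^2(\rr d)$ with generator $A_s$, and by \lemref{lem:domaininclusion} the hypothesis $u_0 \in M_{s+2}^2(\rr d)$ gives $u_0 \in D(A_s)$. Existence then follows directly from \cite[Theorem~I.2.4]{Pazy1}: the orbit $u(t) = \cK_t u_0$ lies in $C^1([0,\infty),M_s^2) \cap C([0,\infty),D(A_s))$ and satisfies $u'(t) = A_s \cK_t u_0 = \cK_t A_s u_0$ with $u(0)=u_0$. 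Using the identification $A_s = - q^w(x,D)$ combined with \lemref{lem:boundedpropagator} (which gives $\cK_t u_0 \in M_{s+2}^2$ for every $t \geqs 0$, so that $q^w(x,D)\cK_t u_0 \in M_s^2$ makes sense via \eqref{eq:shubincontMs}), this says $\cK_t u_0$ solves \eqref{eq:CP} in $M_s^2$.

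For uniqueness I would use the standard semigroup argument. Let $v \in C^1([0,\infty),M_s^2)$ be any solution of \eqref{eq:CP}. The equation $\partial_t v = - q^w(x,D) v$ holds in $M_s^2$, so in particular $q^w(x,D) v(\tau) = -v'(\tau) \in M_s^2$ for every $\tau \geqs 0$. Combined with the identification $A_s = -q^w(x,D)$ (which after Lemma \ref{lem:domaininclusion} can be promoted to the statement that $v(\tau) \in D(A_s)$ whenever $v(\tau) \in M_s^2$ and $q^w(x,D) v(\tau) \in M_s^2$, using that $A_s$ is closed), we conclude $v(\tau) \in D(A_s)$ with $A_s v(\tau) = v'(\tau)$. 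Fix $T>0$ and set
\begin{equation*}
\phi(\tau) = \cK_{T-\tau} v(\tau), \qquad \tau \in [0,T].
\end{equation*}
A standard computation, using strong differentiability of $\tau \mapsto \cK_{T-\tau} w$ on $D(A_s)$ and the fact that $\cK_{T-\tau}$ commutes with $A_s$ on $D(A_s)$, yields
\begin{equation*}
\phi'(\tau) = -A_s \cK_{T-\tau} v(\tau) + \cK_{T-\tau} v'(\tau) = -\cK_{T-\tau} A_s v(\tau) + \cK_{T-\tau} A_s v(\tau) = 0.
\end{equation*}
Hence $\phi$ is constant on $[0,T]$, giving $v(T) = \phi(T) = \phi(0) = \cK_T u_0$. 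Since $T>0$ was arbitrary, $v = \cK_\cdot u_0$.

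The main obstacle is the step where one must know $v(\tau) \in D(A_s)$ for every $\tau$. This is not part of the hypothesis $v \in C^1([0,\infty),M_s^2)$ as stated, and bridging this gap requires the identification of $A_s$ as (a restriction of) the maximal realization of $-q^w(x,D)$ in $M_s^2$. I would settle this either by invoking the closedness of $A_s$ together with a direct verification that $M_s^2 \cap \{ v : q^w(x,D) v \in M_s^2\} \subseteq D(A_s)$ — approximating $v(\tau)$ by Schwartz functions converging in this graph norm — or equivalently by recognizing the equation $\partial_t v + q^w(x,D) v = 0$ in $M_s^2$ as the abstract equation $v'(\tau) = A_s v(\tau)$ on the closure of $q^w(x,D)|_{\mathscr S}$ in $M_s^2$. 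Once this is in place, the product rule computation above is immediate and uniqueness follows.
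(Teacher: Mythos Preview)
Your approach matches the paper's exactly: the proof there simply cites Lemma~\ref{lem:domaininclusion}, \cite[Theorem~I.2.4]{Pazy1}, and \cite[pp.~483--84]{Kato1}, and the uniqueness argument you spell out (differentiating $\tau\mapsto \cK_{T-\tau}v(\tau)$) is precisely the Kato computation. Your concern about whether $v(\tau)\in D(A_s)$ is dissolved by the paper's convention stated immediately before the corollary (``we identify $A_s=-q^w(x,D)$''), so that a solution of \eqref{eq:CP} in $M_s^2$ is by definition a solution of the abstract Cauchy problem for the generator $A_s$; no separate graph-norm approximation is needed.
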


\begin{proof}
The claim is a consequence of Lemma \ref{lem:domaininclusion}, \cite[Theorem~I.2.4]{Pazy1} and \cite[pp.~483--84]{Kato1}. 
\end{proof}

Finally we obtain from \eqref{eq:shubinsobolevspaces} the following consequence. 

\begin{cor}\label{cor:uniquenessCP2}
The Cauchy problem \eqref{eq:CP} has the solution $\cK_t u_0$ for any $u_0 \in \cS' (\rr d)$. 
It is unique in the sense of Corollary \ref{cor:uniquenessCP}. 
\end{cor}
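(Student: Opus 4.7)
The plan is essentially a reduction to Corollary \ref{cor:uniquenessCP} via the fact that $\cS'(\rr d)$ is exhausted by the Shubin--Sobolev scale.

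Given $u_0 \in \cS'(\rr d)$, I would first invoke the second identity in \eqref{eq:shubinsobolevspaces}, which says
\begin{equation*}
\cS'(\rr d) = \bigcup_{s \in \ro} M_s^2(\rr d),
\end{equation*}
to pick some index $r \in \ro$ with $u_0 \in M_r^2(\rr d)$. Setting $s := r - 2$, one has $u_0 \in M_{s+2}^2(\rr d)$, so Corollary \ref{cor:uniquenessCP} applies directly: $\cK_t u_0$ is the unique solution of \eqref{eq:CP} in $C^1([0,\infty), M_s^2)$, where the equation is read as $\partial_t u + A_s u = 0$ with $A_s = -q^w(x,D)$ acting in $M_s^2(\rr d)$.

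The existence part requires essentially nothing new: by Theorem \ref{thm:semigroupMs} the semigroup $\cK_t$ is strongly continuous on $M_s^2$, and by Lemma \ref{lem:domaininclusion} we have $u_0 \in M_{s+2}^2 \subseteq D(A_s)$, so standard semigroup theory (the reference \cite[Theorem~I.2.4]{Pazy1} already used in the proof of Corollary \ref{cor:uniquenessCP}) yields $\cK_t u_0 \in C^1([0,\infty), M_s^2)$ solving \eqref{eq:CP}. For uniqueness, any solution $v \in C^1([0,\infty), M_s^2)$ of \eqref{eq:CP} with initial datum $u_0$ coincides with $\cK_t u_0$ by the same semigroup argument, since $A_s$ is the generator on $M_s^2$.

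The statement of the corollary specifies that uniqueness is understood in the sense of Corollary \ref{cor:uniquenessCP}, which I read as: uniqueness within the space $C^1([0,\infty), M_s^2(\rr d))$ where $s$ is the chosen index such that $u_0 \in M_{s+2}^2$. I would add a brief remark that the solution $\cK_t u_0 \in C^1([0,\infty), M_s^2)$ does not depend on the choice of $r$ (hence $s$): if $u_0 \in M_{r_1}^2 \cap M_{r_2}^2$, then by \eqref{eq:operatorsnested} and the fact that $\cK_t$ is a semigroup in each $M_s^2$, the two solutions produced agree as elements of $\cS'$. There is no real obstacle here; the only mild subtlety is bookkeeping the index shift by $2$ coming from the loss of regularity in $q^w(x,D) \in \op \Gamma^2$, which was already absorbed into Lemma \ref{lem:domaininclusion}.
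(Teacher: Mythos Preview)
Your proposal is correct and follows precisely the paper's approach: the paper's proof is the one-line remark ``Finally we obtain from \eqref{eq:shubinsobolevspaces} the following consequence,'' and you have simply spelled out what that reduction means (pick $r$ with $u_0\in M_r^2$, set $s=r-2$, and apply Corollary~\ref{cor:uniquenessCP}). Your additional remark on independence of the choice of $r$ is a harmless clarification not present in the paper.
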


A version of Corollary \ref{cor:uniquenessCP2} with additional information can be obtained in another fashion, as follows. 

By Corollary \ref{cor:propagatorschwartz} we may for fixed $t \geqs 0$ extend $\cK_t$ from domain $\cS(\rr d)$ to $\cS'(\rr d)$ uniquely by defining
\begin{equation}\label{eq:dualsemigroup}
(\cK_t u, \fy) = (u, \cK_t^* \fy) = (u, \cK_{e^{- 2 i t \overline F}} \fy), \quad u \in \cS'(\rr d), \quad \fy \in \cS(\rr d), 
\end{equation}
since $\cK_t^* \fy \in \cS$, cf. \eqref{eq:L2adjoint}. 
Then $\cK_{t_1 + t_2} = \cK_{t_1} \cK_{t_2}$ for $t_1, t_2 \geqs 0$ and $\cK_0 = I$
for the action on $\cS'$ follows as in the proof of Theorem \ref{thm:semigroupMs}. 

Denote by $\cS_{\rm w}'$ the space $\cS'$ equipped with its weak$^*$ topology, with seminorms $\cS' \ni u \mapsto |(u,\fy)|$ for all $\fy \in \cS$. 
From Corollary \ref{cor:propagatorschwartz} it follows that $\cK_t: \cS_{\rm w}' \to \cS_{\rm w}'$ is continuous for each $t \geqs 0$. 
Let $u \in \cS'(\rr d)$. For some $s \geqs 0$ we have for $\fy \in \cS$
\begin{equation*}
|((\cK_t -I)u, \fy)| = |(u, (\cK_t^*-I) \fy)|
\lesssim \| (\cK_t^*-I) \fy \|_{M_s^2}. 
\end{equation*}
The right-hand side approaches zero as $t \to 0^+$ according to Theorem \ref{thm:semigroupMs}. 
We may conclude that $\cK_t$ is a strongly continuous semigroup on $\cS_{\rm w}'$. 

The modulus of the right-hand side of \eqref{eq:dualsemigroup} equals 
$|(u, \cK_t^* \fy)|$. 
For $t$ in the interval $0 \leqs t \leqs T < \infty$ with $T > 0$ given, this is an indexed family of seminorms of $u \in \cS_{\rm w}'$, 
but we cannot estimate $\{ |(u, \cK_t^* \fy)| \}_{0 \leqs t \leqs T}$ by a \emph{single} seminorm. 
Thus we cannot show that the semigroup $\cK_t$ is locally equicontinuous on $\cS_{\rm w}'$. 
For that purpose we need to equip $\cS'$ with another topology. 

The space $\cS_{\rm str}'$ denotes $\cS'$ equipped with its strong topology \cite{Reed1}, with seminorms 
\begin{equation*}
\cS' \ni u \mapsto \sup_{\fy \in B} |(u,\fy)|
\end{equation*}
for each bounded set $B \subseteq \cS$. 
Expressed with the seminorms \eqref{eq:seminormsS2} 
a bounded set satisfies
\begin{equation*}
\sup_{\fy \in B} \| \fy \|_{M_s^2} = C_s < \infty, \quad \forall s \geqs 0. 
\end{equation*}

If $B \subseteq \cS$ is bounded and $0 \leqs t \leqs T$ then
\begin{equation*}
\sup_{\fy \in B} |(\cK_t u, \fy)| = \sup_{\fy \in B} |(u, \cK_t^* \fy)| 
\leqs \sup_{\fy \in B, \ 0 \leqs t \leqs T} |(u, \cK_t^* \fy)| , \quad u \in \cS'. 
\end{equation*}
By Lemma \ref{lem:boundedpropagator} $\{ \cK_t^* B, \ 0 \leqs t \leqs T\} \subseteq \cS$ is a bounded set. 
This shows that $\cK_t$ is continuous on $\cS_{\rm str}'$ for each $t \geqs 0$, and $\{ \cK_t \}_{t \geqs 0}$ is a locally equicontinuous semigroup on $\cS_{\rm str}'$. 
It is also a strongly continuous semigroup on $\cS_{\rm str}'$. 
In fact let $u \in \cS'(\rr d)$ and let $B \subseteq \cS$ be bounded. 
We have for some $k \in \no$ using \eqref{eq:Qknorm} and Lemma \ref{lem:strongcont} 
\begin{align*}
\sup_{\fy \in B} |((\cK_t -I)u, \fy)| 
& = \sup_{\fy \in B} |(u, (\cK_t^* - I)\fy)| \\
& \lesssim \sup_{\fy \in B} \| (\cK_t^* - I)\fy) \|_{M_k^2} \\
& \leqs \sum_{|\alpha+\beta| \leqs k} \sup_{\fy \in B}  \| x^\alpha D^\beta (\cK_t^* - I) \fy \|_{L^2} \\
& \quad \longrightarrow 0, \quad t \to 0^+. 
\end{align*}

We have proved: 

\begin{thm}\label{thm:strongcontlocequicontS}
The semigroup $\cK_t$ is: 
\begin{enumerate}[\rm (i)] 
\item strongly continuous on $\cS_{\rm w}'$, and
\item locally equicontinuous strongly continuous on $\cS_{\rm str}'$.  
\end{enumerate}
\end{thm}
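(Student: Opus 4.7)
The plan is to prove both statements by dualizing the results already established on $\cS$ and on the Shubin--Sobolev spaces, using the adjoint identity $\cK_t^* = \cK_{e^{-2it\overline F}}$ which shows that the adjoint is a semigroup of the same type acting on $\cS$. Thus everything reduces to estimates on Schwartz functions under the adjoint propagator.

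For part (i), I would first note that the extension \eqref{eq:dualsemigroup} makes sense since $\cK_t^*$ maps $\cS$ into $\cS$ by Corollary \ref{cor:propagatorschwartz}. The semigroup property $\cK_{t_1+t_2} = \cK_{t_1}\cK_{t_2}$ and $\cK_0 = I$ on $\cS_{\rm w}'$ transfer immediately from the corresponding identities for the adjoint semigroup acting on $\cS$. Continuity of $\cK_t$ on $\cS_{\rm w}'$ for each fixed $t \geqs 0$ follows because $\fy \mapsto \cK_t^* \fy$ is continuous on $\cS$, so a weak$^*$ seminorm $u \mapsto |(\cK_t u,\fy)| = |(u, \cK_t^* \fy)|$ is a weak$^*$ seminorm of $u$. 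For strong continuity at $t=0+$, given $u \in \cS'$ pick $s \geqs 0$ with $u \in M_{-s}^2$ (using \eqref{eq:shubinsobolevspaces}); then for each $\fy \in \cS$
\begin{equation*}
|((\cK_t - I)u,\fy)| = |(u, (\cK_t^* - I)\fy)| \lesssim \|(\cK_t^* - I)\fy\|_{M_s^2} \longrightarrow 0, \qquad t \to 0+,
\end{equation*}
by Theorem \ref{thm:semigroupMs} applied to the adjoint semigroup.

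For part (ii), the seminorms of $\cS_{\rm str}'$ are $u \mapsto \sup_{\fy \in B}|(u,\fy)|$ for bounded $B \subseteq \cS$. By Lemma \ref{lem:boundedpropagator} (applied to the adjoint, which is of the same form), for any $T>0$ the set $\{\cK_t^* \fy : \fy \in B,\ 0 \leqs t \leqs T\}$ is bounded in $\cS$ (with respect to each seminorm \eqref{eq:seminormsS2}), which immediately yields both continuity of each $\cK_t$ on $\cS_{\rm str}'$ and local equicontinuity of the family. For strong continuity at $t=0+$, fix $u \in \cS'$ and bounded $B \subseteq \cS$, and choose $k \in \no$ large enough that $u \in M_{-k}^2$. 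Then, using the equivalent norm \eqref{eq:Qknorm} on $M_k^2$,
\begin{equation*}
\sup_{\fy \in B}|((\cK_t-I)u,\fy)| \lesssim \sup_{\fy \in B}\|(\cK_t^* - I)\fy\|_{M_k^2} \asymp \sum_{|\alpha+\beta|\leqs k} \sup_{\fy \in B} \|x^\alpha D^\beta (\cK_t^* - I)\fy\|_{L^2},
\end{equation*}
and each summand tends to zero as $t \to 0+$ by Lemma \ref{lem:strongcont} applied to the adjoint propagator.

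The main subtlety is the strong-topology case, where convergence must be uniform over bounded sets $B \subseteq \cS$. A naive bound using individual seminorms of the form $|(u,\cK_t^*\fy)|$ depends on $t$ through $\fy$, and cannot be absorbed into a single seminorm of $u$; this is precisely the obstacle flagged in the discussion preceding the theorem. The key tool that removes it is Lemma \ref{lem:strongcont}, which provides convergence $\sup_{\fy \in B}\|x^\alpha D^\beta(\cK_t^* - I)\fy\|_{L^2} \to 0$ uniformly over bounded sets $B$ in $\cS$. Everything else is essentially formal dualization.
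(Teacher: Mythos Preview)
Your proposal is correct and follows essentially the same route as the paper's proof: both arguments dualize via the adjoint identity $\cK_t^* = \cK_{e^{-2it\overline F}}$, use Lemma~\ref{lem:boundedpropagator} for local equicontinuity on $\cS_{\rm str}'$, and obtain strong continuity on $\cS_{\rm str}'$ from Lemma~\ref{lem:strongcont} applied to the adjoint propagator over bounded sets. The only cosmetic difference is that you phrase the continuity bound on $u$ as $u \in M_{-s}^2$ via \eqref{eq:shubinsobolevspaces}, whereas the paper writes the equivalent statement $|(u,\psi)| \lesssim \|\psi\|_{M_s^2}$ directly from the seminorm characterization of $\cS'$.
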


The generator of the semigroup $\cK_t$ on $\cS_{\rm w}'$ is denoted
\begin{equation*}
A_{\rm w}' u = \lim_{h \to 0^+} h^{-1} \left( \cK_h - I \right) u
\end{equation*}
for all $u \in \cS_{\rm w}'$, denoted $D(A_{\rm w}') \subseteq \cS_{\rm w}'$, such that the limit is well defined in $\cS_{\rm w}'$.
The generator of the semigroup $\cK_t$ on $\cS_{\rm str}'$ is denoted $A_{\rm str}'$. 
Note that $A_{\rm str}' \subseteq A_{\rm w}'$. 
By \cite[Proposition~2.1]{Komura1} 
$A_{\rm w}' = A$ defined by \eqref{eq:Aextension} and hence $D(A_{\rm w}') = \cS'$. 

The local equicontinuity of $\cK_t$ acting on $\cS_{\rm str}'$ guarantees by \cite[Proposition~1.4]{Komura1} 
that the operator $A_{\rm str}'$ is closed. 
By \cite[Proposition~1.3]{Komura1}, the inclusion $D(A_{\rm str}') \subseteq \cS'$ is dense. 
Combining the latter two facts gives $D(A_{\rm str}') = \cS'$ and $A_{\rm str}' = A_{\rm w}' = A$. 
The generators of the two semigroups are identical. 

We denote $A' = A_{\rm str}' = A_{\rm w}'$, 
and $A$ is defined by  \eqref{eq:generatorS}. 
Extending \eqref{eq:operatorsnested} we thus have for $s_1 \leqs s_2$
\begin{equation*}
A \subseteq A_{s_2} \subseteq A_{s_1} \subseteq A'. 
\end{equation*}

\begin{rem}\label{rem:abstractdual}
There is also a more abstract motivation for some of the conclusions above, 
based on the fact that the space $\cS$ is reflexive \cite[Theorem~V.24]{Reed1}.  
Theorem \ref{thm:strongcontlocequicontS} (i) is an immediate consequence of the definition \eqref{eq:dualsemigroup}, cf. \cite[p.~262]{Komura1}. 
The reflexivity of $\cS$ entails the following consequence by \cite[Theorem 1 and its Corollary]{Komura1}.  
The semigroup $\cK_t$, considered as a strongly continuous 
semigroup on $\cS_{\rm w}'$, is automatically a strongly continuous semigroup on $\cS_{\rm str}'$, and the two semigroups have identical infinitesimal generators. 
\end{rem}

An appeal to \cite[Proposition~1.2]{Komura1} and \cite[pp.~483--84]{Kato1} gives 
a version of Corollary \ref{cor:uniquenessCP2} with a continuity statement. 
Note that the uniqueness space is larger than the solution space: 
$C^1([0,\infty),\cS_{\rm str}') \subseteq C^1([0,\infty),\cS_{\rm w}')$. 

\begin{cor}\label{cor:uniquenessCP3}
For any $u_0 \in \cS' (\rr d)$
the Cauchy problem \eqref{eq:CP} has the solution $\cK_t u_0$ in the space 
$C^1([0,\infty),\cS_{\rm str}')$. 
The solution is unique in the space $C^1([0,\infty),\cS_{\rm w}')$. 
\end{cor}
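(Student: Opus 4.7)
The plan is to combine Theorem \ref{thm:strongcontlocequicontS}(ii) with the abstract semigroup theory of \cite{Komura1} for existence, and then run a duality argument in the style of \cite[pp.~483--84]{Kato1} for uniqueness in the weaker topology $\cS_{\rm w}'$. The key structural input, already established in the discussion preceding the statement, is that the generator $A'$ of $\cK_t$ on $\cS_{\rm str}'$ has been identified with the continuous extension $A$ defined in \eqref{eq:Aextension}, and that $D(A') = \cS'(\rr d)$.

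For the existence part, since by Theorem \ref{thm:strongcontlocequicontS}(ii) the semigroup $\cK_t$ is a locally equicontinuous strongly continuous semigroup on $\cS_{\rm str}'$, I would apply \cite[Proposition~1.2]{Komura1}: for every $u_0 \in D(A') = \cS'(\rr d)$ the orbit $t \mapsto \cK_t u_0$ belongs to $C^1([0,\infty),\cS_{\rm str}')$ and satisfies $\partial_t \cK_t u_0 = A' \cK_t u_0 = -q^w(x,D) \cK_t u_0$, with right-sided derivative at $t=0$. Together with $\cK_0 = I$ this yields the existence statement in the class $C^1([0,\infty),\cS_{\rm str}')$.

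For uniqueness in the (larger) space $C^1([0,\infty),\cS_{\rm w}')$ I would employ a time-reversed test function. Fix $t>0$ and let $u \in C^1([0,\infty),\cS_{\rm w}')$ solve \eqref{eq:CP} with $u(0)=u_0$. For each $\fy \in \cS(\rr d)$ set
\begin{equation*}
g(s) = (u(s), \cK_{t-s}^* \fy), \qquad s \in [0,t].
\end{equation*}
Since $\cK_r^* = \cK_{e^{-2ir\overline F}}$ is itself a locally equicontinuous strongly continuous semigroup on $\cS(\rr d)$ by Corollary \ref{cor:propagatorschwartz}, the map $s \mapsto \cK_{t-s}^* \fy$ is $C^1$ into $\cS$ with derivative $\overline q^w(x,D) \cK_{t-s}^* \fy$. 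Differentiating $g$ by the product rule and using the equation $\partial_s u(s) = -q^w(x,D) u(s)$ in $\cS_{\rm w}'$ together with the duality $(q^w(x,D) v, \psi) = (v, \overline q^w(x,D)\psi)$ built into the definition \eqref{eq:Aextension} of the extension $A$, the two contributions cancel and $g' \equiv 0$. Evaluating $g(0) = g(t)$ gives $(u_0, \cK_t^* \fy) = (u(t), \fy)$ for every $\fy \in \cS$, hence $u(t) = \cK_t u_0$.

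The main delicate point is justifying the product rule for $g$: the first factor lives in the non-Banach space $\cS_{\rm w}'$ and the second in $\cS$. However, the $\cS_{\rm w}'$-seminorms only involve pairing with a single test function, so on the scalar level one has
\begin{equation*}
g(s+h) - g(s) = (u(s+h)-u(s),\cK_{t-s-h}^*\fy) + (u(s), \cK_{t-s-h}^*\fy - \cK_{t-s}^*\fy),
\end{equation*}
and each summand is handled respectively by the $\cS_{\rm w}'$-differentiability of $u$ combined with the local equicontinuity of $\cK_r^*$ on $\cS$ (which keeps the test function $\cK_{t-s-h}^*\fy$ in a bounded subset of $\cS$ as $h \to 0$), and by the $\cS$-differentiability of $\cK_{t-s}^*\fy$. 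This is the only step requiring care; everything else is a direct bookkeeping of the identifications already made in Theorem \ref{thm:strongcontlocequicontS} and \eqref{eq:Aextension}.
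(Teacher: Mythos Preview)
Your proposal is correct and matches the paper's own argument, which consists merely of an appeal to \cite[Proposition~1.2]{Komura1} for the existence statement in $C^1([0,\infty),\cS_{\rm str}')$ and to \cite[pp.~483--84]{Kato1} for uniqueness; you have simply written out these two ingredients in detail. One small refinement for the step you flag as delicate: the reason the varying test function $\cK_{t-s-h}^*\fy$ may be passed through the weak$^*$ difference quotient is not just its boundedness in $\cS$ but rather the equicontinuity of the family $\{h^{-1}(u(s+h)-u(s))\}$ in $\cS'$, which follows from Banach--Steinhaus since $\cS$ is barrelled.
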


\begin{rem}\label{rem:SCLECauchy}
A strongly continuous semigroup $T_t$ in a locally convex space $X$ has the following interesting property. 
The map $[0,\infty) \ni t \mapsto T_t u_0$ is a solution to \eqref{eq:CP} (with $q^w(x,D)$ replaced by $-A$) in $C^1([0,\infty), X)$
when $u_0 \in D(A)$ where $A$ denotes the generator of the semigroup \cite[Proposition~1.2]{Komura1}. 
The proof in \cite{Komura1} uses integrals of $T_t u_0$ with respect to $t$ over finite intervals in $[0,\infty)$. 
Thanks to the strong continuity such integrals are well defined as Riemann integrals. 
Local equicontinuity is not needed to define integrals, as is done e.g. in the proof of \cite[Theorem~IX.3.1]{Yosida1}. 
The solution $T_t u_0$ is unique in $C^1([0,\infty), X)$ by the argument in \cite[pp.~483--84]{Kato1}. 

If the space $X$ is sequentially complete then the domain $D(A) \subseteq X$ is dense \cite[Proposition~1.3]{Komura1}. 
If the semigroup $T_t$ is locally equicontinuous then the generator $A$ is a closed operator \cite[Proposition~1.4]{Komura1}. 
\end{rem}

%%%%%%%%%%%%%%%%%%%%%%%%%%%%%%%%%%%
\section{Strong continuity on Gelfand--Shilov (ultradistribution) spaces}
\label{sec:GelfandShilov}
%%%%%%%%%%%%%%%%%%%%%%%%%%%%%%%%%%%

In this section we study the semigroup $\cK_t$ acting on the Gelfand--Shilov space $\Sigma_s(\rr d)$ for $s > \frac{1}{2}$ and its dual space of ultradistributions $\Sigma_s'(\rr d)$.

We need the following lemma which  
is similar to \cite[Theorem~6.1.6]{Nicola1}. 
It is basically a special case of \cite[Remark~2.1]{Langenbruch1}, but we provide an elementary proof 
in order to give a selfcontained account as a service to the reader. 

\begin{lem}\label{lem:seminorms}
If $s > \frac{1}{2}$ then
the family of seminorms 
\begin{equation}\label{eq:seminormL2}
\nm f {h} \equiv \sup_{\alpha,\beta \in \nn d} \frac {\| x^\alpha D ^\beta f \|_{L^2}}{h^{|\alpha + \beta |}(\alpha !\, \beta !)^s}
\end{equation}
for $h > 0$, is equivalent to the family $\{ \| \cdot \|_{\mathcal S_{s,h}} \}_{h > 0}$ as seminorms on $\Sigma_s(\rr d)$. 
\end{lem}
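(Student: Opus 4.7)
The plan is to establish two-sided bounds between the two families: for each $h>0$ I will find $h_1,h_2>0$ and constants $C_1,C_2>0$ with
\[
\nm f h \leqs C_1 \nm f {\mathcal S_{s,h_1}}, \qquad \nm f {\mathcal S_{s,h}} \leqs C_2 \nm f {h_2}, \qquad f \in \Sigma_s(\rr d).
\]
Since both families are parametrized over all $h>0$, these inequalities suffice for equivalence of the two systems of seminorms defining the topology of $\Sigma_s(\rr d)$.

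For the first inequality I would reuse the weight trick employed in \eqref{eq:bracketdsquare}. Writing $\eabs{x}^{2d}=\sum_{|\sigma|\leqs d}C_\sigma x^{2\sigma}$ gives
\[
\|x^\alpha D^\beta f\|_{L^2}^2 \;=\; \sum_{|\sigma|\leqs d}C_\sigma\int_{\rr d}\eabs{x}^{-2d}\,|x^{\alpha+\sigma}D^\beta f(x)|^2\,dx \;\leqs\; C_d \max_{|\sigma|\leqs d}\|x^{\alpha+\sigma}D^\beta f\|_{L^\infty}^2,
\]
since $\eabs{\cdot}^{-2d}$ is integrable on $\rr d$. Using $|x^{\alpha+\sigma}D^\beta f(x)|\leqs \nm f{\mathcal S_{s,h}}\, h^{|\alpha+\sigma|+|\beta|}((\alpha+\sigma)!\beta!)^s$ together with the elementary estimates $(\alpha+\sigma)!\leqs 2^{|\alpha|+|\sigma|}\alpha!\sigma!$ and $\sigma!\leqs d!$ for $|\sigma|\leqs d$, one obtains $\nm f{2^s h}\leqs C\nm f{\mathcal S_{s,h}}$ for a constant $C=C(s,d,h)$.

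For the reverse inequality I would fix $k_0 > d/2$ and apply the Sobolev embedding $\|g\|_{L^\infty} \lesssim \sum_{|\gamma|\leqs k_0}\|D^\gamma g\|_{L^2}$ to $g = x^\alpha D^\beta f$, followed by the Leibniz expansion
\[
D^\gamma(x^\alpha D^\beta f) \;=\; \sum_{\delta\leqs\min(\gamma,\alpha)}\binom{\gamma}{\delta}\frac{(-i)^{|\delta|}\alpha!}{(\alpha-\delta)!}\,x^{\alpha-\delta}\,D^{\beta+\gamma-\delta}f.
\]
Each $L^2$-norm on the right is bounded via $\nm f h$ by $\nm f h \cdot h^{|\alpha-\delta|+|\beta+\gamma-\delta|}((\alpha-\delta)!(\beta+\gamma-\delta)!)^s$. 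The $\beta$-dependence is controlled by $(\beta+\gamma-\delta)!\leqs 2^{|\beta|+k_0}\beta!(\gamma-\delta)!$ with $(\gamma-\delta)!$ and $\binom{\gamma}{\delta}$ bounded by constants depending only on $k_0, d$, and the powers $h^{|\alpha-\delta|+|\beta+\gamma-\delta|}$ differ from $h^{|\alpha+\beta|}$ by a factor $h^{\pm k_0}$. The combinatorial core is the factor
\[
\frac{\alpha!}{(\alpha-\delta)!}\,((\alpha-\delta)!)^s \;=\; \alpha!\,((\alpha-\delta)!)^{s-1},
\]
which is bounded by $(\alpha!)^s$ directly when $s\geqs 1$ (since $(\alpha-\delta)!\leqs\alpha!$), and which for $\tfrac12 < s < 1$ is rewritten as $(\alpha!)^s\bigl(\alpha!/(\alpha-\delta)!\bigr)^{1-s}\leqs (\alpha!)^s(|\alpha|+1)^{k_0(1-s)}$ via the pointwise bound $\alpha!/(\alpha-\delta)!\leqs(|\alpha|+1)^{|\delta|}\leqs(|\alpha|+1)^{k_0}$.

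The main obstacle is precisely this polynomial correction in the subcritical range $\tfrac12 < s < 1$: it forces one to absorb $(|\alpha|+1)^{k_0(1-s)}$ into an exponential $(1+\varepsilon)^{|\alpha|}$ by the elementary fact that for any $\varepsilon,N>0$ there is $C_{\varepsilon,N}$ with $(n+1)^N\leqs C_{\varepsilon,N}(1+\varepsilon)^n$. The cost is a small enlargement of the $h$-parameter, yielding $\nm f{\mathcal S_{s,h''}}\leqs C\nm f h$ with $h''=h/(2^{s+1}(1+\varepsilon))$ (for a convenient choice of $\varepsilon$). Because the equivalence needs only to hold up to rescaling of $h$, this absorption suffices, and for $s\geqs 1$ the argument is mechanical without any such manoeuvre.
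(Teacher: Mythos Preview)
Your proposal is correct and follows essentially the same route as the paper: the weight trick $\eabs{x}^{2d}=\sum_{|\sigma|\leqs d}C_\sigma x^{2\sigma}$ for the direction $\|\cdot\|_h\lesssim\|\cdot\|_{\mathcal S_{s,h_1}}$, and Sobolev embedding followed by the Leibniz expansion of $D^\gamma(x^\alpha D^\beta f)$ for the reverse. The only notable difference is in the factorial bookkeeping for the reverse estimate: the paper avoids your case split by writing $1=2s-\delta$ with $\delta=2s-1>0$ and distributing $\kappa!=\kappa!^{2s-\delta}$ so that $\kappa!^s((\alpha-\kappa)!)^s\leqs(\alpha!)^s$ and $\kappa!^s((\beta+\gamma-\kappa)!)^s\leqs((\beta+\gamma)!)^s$ hold uniformly in $s>\tfrac12$, with the residual $\kappa!^{-\delta}$ harmless since $|\kappa|\leqs 2d$; your polynomial-absorption argument for $\tfrac12<s<1$ achieves the same end.
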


\begin{proof}
Using $(\alpha + \gamma)! \leqs 2^{|\alpha+\gamma|} \alpha! \gamma!$ (cf. \cite[Eq.~(0.3.6)]{Nicola1})
we have for $\alpha, \beta \in \nn d$ and $0 < h \leqs 1$, cf. \eqref{eq:bracketdsquare}, 
\begin{align*}
\| x^\alpha D ^\beta f \|_{L^2}
& = \| \eabs{x}^{- d} \eabs{x}^{d} x^\alpha D ^\beta f \|_{L^2} 
\lesssim \sum_{|\gamma| \leqs d} \| x^{\alpha+\gamma} D ^\beta f \|_{L^\infty} \\
& \leqs \| f \|_{\mathcal S_{s,h}}  \sum_{|\gamma| \leqs d} h^{|\alpha + \gamma + \beta |}((\alpha + \gamma)!\, \beta !)^s \\
& \lesssim \| f \|_{\mathcal S_{s,h}} (2^s h)^{|\alpha + \beta |}  (\alpha! \, \beta !)^s. 
\end{align*}
This gives $\| f \|_{2^s h} \lesssim \| f \|_{\mathcal S_{s,h}}$,
or equivalently $\| f \|_{h} \lesssim \| f \|_{\mathcal S_{s,2^{-s} h}}$ 
for $0 < h \leqs 2^s$. 
Since $\| \cdot \|_{h_1} \leqs \| \cdot \|_{h_2}$ when $h_1 \geqs h_2 > 0$
this shows that any seminorm $\| \cdot \|_h$ with $h > 0$ can be estimated by a seminorm from $\{ \| \cdot \|_{\mathcal S_{s,h}} \}_{h > 0}$.

For an opposite estimate, 
again for $\alpha, \beta \in \nn d$ and $h > 0$ we have using Fourier's inversion formula and Plancherel's identity for $x \in \rr d$
\begin{align*}
| x^\alpha D ^\beta f (x)|
& = \left| (2 \pi)^{- \frac{d}{2} } \int_{\rr d}  \eabs{\xi}^{- 2 d} \eabs{\xi}^{2 d} \widehat{x^\alpha D ^\beta f } (\xi) e^{i \la x, \xi \ra} \, \dd \xi \right| \\
& \lesssim  \left\| \sum_{|\gamma| \leqs 2 d}  C_\gamma \, \xi^\gamma \widehat{x^\alpha D ^\beta f } (\xi) \right\|_{L^2}
\lesssim \sum_{|\gamma| \leqs 2 d}  \left\| \cF \left( D^\gamma \left( x^\alpha D ^\beta f\right) \right) \right\|_{L^2} \\
& = \sum_{|\gamma| \leqs 2 d}  \left\| D^\gamma \left( x^\alpha D ^\beta f\right) \right\|_{L^2}
\leqs \sum_{|\gamma| \leqs 2 d} \ \sum_{\kappa \leqs \min(\gamma,\alpha)} \binom{\gamma}{\kappa} 
\frac{\alpha!}{(\alpha-\kappa)!} \left\| x^{\alpha-\kappa} D ^{\beta+\gamma-\kappa} f \right\|_{L^2} \\
& \leqs 2^{|\alpha|} \sum_{|\gamma| \leqs 2 d} \ \sum_{\kappa \leqs \min(\gamma,\alpha)}  \binom{\gamma}{\kappa} 
\kappa! \left\| x^{\alpha-\kappa} D ^{\beta+\gamma-\kappa} f \right\|_{L^2}
\end{align*}
in the last step using $\alpha! = (\alpha-\kappa + \kappa)! \leqs (\alpha-\kappa)! \, \kappa! \, 2^{|\alpha|}$. 

Next we use $1 = 2 s - \delta$ where $\delta > 0$, 
and $\kappa! \geqs |\kappa|! \, d^{-|\kappa|}$ for $\kappa \in \nn d$ \cite[Eq.~(0.3.3)]{Nicola1} which gives 
\begin{equation}\label{eq:exponentialestimate}
\kappa!^{-\delta} h^{- 2 |\kappa|} 
= \left( \frac{h^{- \frac{2 |\kappa|}{\delta}}}{\kappa!} \right)^\delta
\leqs \left( \frac{ \left( d h^{- \frac{2}{\delta}} \right)^{|\kappa|} }{|\kappa|!} \right)^\delta
\leqs \exp \left( \delta d h^{- \frac{2}{\delta}} \right). 
\end{equation}
Thus for $0 < h \leqs 1$ and $x \in \rr d$
\begin{equation*}
\begin{aligned}
| x^\alpha D ^\beta f (x)|
& \leqs \| f \|_{h} \, 2^{|\alpha|} \sum_{|\gamma| \leqs 2 d}  \ \sum_{\kappa \leqs \min(\gamma,\alpha)} \binom{\gamma}{\kappa} 
\kappa!^{2s-\delta} 
h^{|\alpha + \beta+\gamma - 2\kappa|} \left( (\alpha-\kappa)! (\beta+\gamma-\kappa)! \right)^s \\
& \leqs \| f \|_{h} \, 2^{|\alpha|} h^{|\alpha + \beta|}  
\sum_{|\gamma| \leqs 2 d} \  \sum_{\kappa \leqs \min(\gamma,\alpha)} 
\binom{\gamma}{\kappa} 
\kappa!^{-\delta} h^{- 2 |\kappa|} \left( \alpha! (\beta+\gamma)! \right)^s \\
& \lesssim \| f \|_{h} \, (2h)^{|\alpha + \beta|}  \left( \alpha! \beta! \right)^s 
\sum_{|\gamma| \leqs 2 d}  \ \sum_{\kappa \leqs \min(\gamma,\alpha)}
\binom{\gamma}{\kappa} 2^{s |\beta|} \\
& \lesssim \| f \|_{h} \, (2^{1+s} h)^{|\alpha + \beta|}  \left( \alpha! \beta! \right)^s 
\end{aligned}
\end{equation*}
which gives 
$\| f \|_{\mathcal S_{s, 2^{1+s} h}} \lesssim \| f \|_{h}$,
or equivalently 
$\| f \|_{\mathcal S_{s,h}} \lesssim \| f \|_{2^{-1-s} h}$
for any $0 < h \leqs 2^{1+s}$. 
Since $\| \cdot \|_{\mathcal S_{s,h_1}} \leqs \| \cdot \|_{\mathcal S_{s,h_2}}$ when $h_1 \geqs h_2 > 0$
this shows that any seminorm $\| \cdot \|_{\mathcal S_{s,h}}$ can be estimated by a seminorm from $\{  \| \cdot \|_h \}_{h > 0}$. 
\end{proof}

The next project is to prove the fundamental Theorem \ref{thm:contGS} which shows that $\cK_t$ is uniformly continuous on $\Sigma_s(\rr d)$ for $0 \leqs t \leqs T$, for any $T > 0$. 
In order to prove it we need several auxiliary results. 
First we study the derivatives of a Gaussian type function $g_\lambda (x) = e^{\lambda x^2/2}$ for $x \in \ro$ and $\lambda \in \co$. 
It is clear that 
\begin{equation}\label{eq:gaussianderivative}
\partial^k g_\lambda (x) = p_{\lambda,k} (x) \, g_\lambda (x)
\end{equation}
where $p_{\lambda,k}$ is a polynomial of order $k \in \no$. 
This polynomial is essentially a rescaled Hermite polynomial with complex argument \cite{Szego1}. 

\begin{lem}\label{lem:gaussderivata}
Suppose $g_\lambda (x) = e^{\lambda x^2/2}$ for $x \in \ro$ and $\lambda \in \co$, 
let $p_{\lambda,k}$ be the polynomial defined in \eqref{eq:gaussianderivative} for $k \in \no$, 
and let $s > \frac{1}{2}$. 
For each $\mu > 0$ there exists $0 < \delta \leqs 1$ such that $p_{\lambda,k}$ satisfy the following estimates provided $|\lambda| \leqs \delta$: 
For any $h > 0$
\begin{equation*}
|p_{\lambda,k} (x)|
\lesssim h^k k!^s e^{\mu \, h^{-\frac{1}{s}} |x|^{\frac{1}{s}}}, \quad x \in \ro, \quad k \in \no.  
\end{equation*}
\end{lem}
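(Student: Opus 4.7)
The plan is to derive an explicit formula for $p_{\lambda,k}$ and then to bound monomials by the target sub-exponential weight, reducing the estimate to a combinatorial sum that can be controlled when $|\lambda|$ is small.

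First I would obtain a closed form for $p_{\lambda,k}$ from the generating function. Since
\begin{equation*}
\sum_{k=0}^\infty \frac{t^k}{k!} p_{\lambda,k}(x) g_\lambda(x) \;=\; g_\lambda(x+t) \;=\; g_\lambda(x) \, e^{\lambda x t + \lambda t^2/2}, \qquad t \in \co,
\end{equation*}
dividing by $g_\lambda(x) \neq 0$ and expanding $e^{\lambda x t}e^{\lambda t^2/2}$ as a double power series in $t$ gives, upon collecting the coefficient of $t^k$,
\begin{equation*}
p_{\lambda,k}(x) \;=\; \sum_{j=0}^{\lfloor k/2 \rfloor} \frac{k!}{2^j\, j!\, (k-2j)!}\, \lambda^{k-j}\, x^{k-2j}.
\end{equation*}

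Second, I would bound monomials against the target exponential. Elementary calculus gives $\sup_{x \in \ro} |x|^m e^{-\nu|x|^{1/s}} = (sm/(e\nu))^{sm}$, and Stirling's formula turns this into $|x|^m \leqs C m!^s (s/\nu)^{sm} e^{\nu|x|^{1/s}}$ with $C$ depending only on $s$. Taking $\nu = \mu h^{-1/s}$ and writing $A = (s/\mu)^s$ yields the clean bound
\begin{equation*}
|x|^m \;\leqs\; C\, m!^s\, (Ah)^m\, e^{\mu h^{-1/s}|x|^{1/s}}, \qquad x \in \ro, \quad m \in \no.
\end{equation*}
Substituting this into the explicit formula produces
\begin{equation*}
|p_{\lambda,k}(x)| \;\leqs\; C (Ah)^k e^{\mu h^{-1/s}|x|^{1/s}} \sum_{j=0}^{\lfloor k/2\rfloor} \frac{k!\, |\lambda|^{k-j}}{2^j\, j!\, (Ah)^{2j}\, (k-2j)!^{1-s}}.
\end{equation*}

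Third, the core algebraic step is to extract a factor of $k!^s$ from the summand by writing
\begin{equation*}
\frac{k!}{(k-2j)!^{1-s}} \;=\; k!^s \left( \frac{k!}{(k-2j)!} \right)^{1-s}.
\end{equation*}
For $s \geqs 1$ the right factor is at most $1$ since $(k-2j)! \leqs k!$; for $1/2 < s < 1$ one uses $k!/(k-2j)! \leqs k^{2j}$ to bound it by $k^{2j(1-s)}$. Summing in $j$ via $\sum_j u^j/j! = e^u$ then yields
\begin{equation*}
|p_{\lambda,k}(x)| \;\leqs\; C (A|\lambda| h)^k k!^s e^{\mu h^{-1/s}|x|^{1/s}} \, \Phi_k,
\end{equation*}
where $\Phi_k$ is a constant for $s \geqs 1$ and $\Phi_k = \exp\!\bigl(k^{2(1-s)}/(2|\lambda|(Ah)^2)\bigr)$ for $1/2 < s < 1$. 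Since $s > 1/2$ forces $2(1-s) < 1$, we have $\Phi_k \leqs C_\eta e^{\eta k}$ for every $\eta > 0$. Choosing $\delta \leqs \min(1, 1/(2A))$, then $\eta > 0$ with $A|\lambda| e^\eta \leqs 1$ for $|\lambda| \leqs \delta$, the factor $(A|\lambda| e^\eta)^k \leqs 1$ absorbs $\Phi_k$, delivering the claim.

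The main obstacle is the range $1/2 < s < 1$: the factor $(k!/(k-2j)!)^{1-s}$ introduces genuine $k$-dependence in each summand, and only the strict inequality $2(1-s)<1$ guaranteed by $s > 1/2$ ensures that $\Phi_k$ grows slower than any exponential $e^{\eta k}$, so that the geometric factor $(A|\lambda|)^k$ can absorb it for $|\lambda|$ sufficiently small. This is precisely where the hypothesis $s > 1/2$ enters.
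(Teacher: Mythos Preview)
Your explicit formula and the monomial bound $|x|^m \leqs C\,m!^s (Ah)^m e^{\mu h^{-1/s}|x|^{1/s}}$ are both correct, and your organization is close to the paper's. But there is a genuine gap in the final absorption step: the constant you obtain is not uniform in $|\lambda| \leqs \delta$.

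Concretely, after summing you arrive at (say in the case $\tfrac12<s<1$)
\[
|p_{\lambda,k}(x)| \;\leqs\; C\,(A|\lambda|h)^k\,k!^s\,e^{\mu h^{-1/s}|x|^{1/s}}\,\Phi_k,
\qquad \Phi_k=\exp\!\Bigl(\tfrac{k^{2(1-s)}}{2|\lambda|(Ah)^2}\Bigr).
\]
When you write $\Phi_k\leqs C_\eta e^{\eta k}$, the constant $C_\eta$ is $\sup_{k\geqs 0}\exp\!\bigl(\tfrac{k^{2(1-s)}}{2|\lambda|(Ah)^2}-\eta k\bigr)$, which depends on $|\lambda|$ and blows up like $\exp\bigl(c\,|\lambda|^{-1/(2s-1)}\bigr)$ as $|\lambda|\to 0$. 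The same happens in your case $s\geqs 1$, where $\Phi_k=e^{1/(2|\lambda|(Ah)^2)}$ already diverges as $|\lambda|\to 0$. So the bound you produce reads $|p_{\lambda,k}(x)|\leqs C(|\lambda|,h)\,h^k k!^s e^{\mu h^{-1/s}|x|^{1/s}}$ with $C(|\lambda|,h)\to\infty$ as $|\lambda|\to 0$; this is not the claimed estimate, which must hold uniformly for all $|\lambda|\leqs\delta$ (this uniformity is essential in the downstream Corollary and Proposition, where $\lambda$ ranges over the eigenvalues of a $t$-dependent matrix).

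The paper avoids this by grouping the $|\lambda|$-powers differently \emph{before} summing: write $|\lambda|^{k-j}=|\lambda|^{k/2}\,(|\lambda|^{1/2})^{k-2j}$, discard $|\lambda|^{k/2}\leqs 1$, and pair $(|\lambda|^{1/2}|x|)^{k-2j}\leqs(\delta^{1/2}|x|)^{k-2j}$. Now $\lambda$ only appears through $\delta$ in the argument of the sub-exponential, and the factorial bookkeeping (using $m!=m!^{2s-\ep}$ with $\ep=2s-1>0$) produces a constant depending only on $s$ and $h$. If you redo your third step with this grouping, your argument goes through.
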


\begin{proof}
By a straightforward induction argument one may confirm the formula (cf. \cite[Eq.~(5.5.4)]{Szego1})
\begin{equation*}
p_{\lambda,k} (x) = k! \sum_{m=0}^{\lfloor k/2  \rfloor} \frac{x^{k-2m} \lambda^{k-m}}{m!(k-2m)! 2^m}. 
\end{equation*}
Since $k! \leqslant 2^k (k-2m)! (2m)!$ we can estimate $|p_{\lambda,k} (x)|$ as 
\begin{equation*}
|p_{\lambda,k} (x)| 
\leqslant \sum_{m=0}^{\lfloor k/2  \rfloor} \frac{(|\lambda|^{\frac{1}{2}} \, |x|)^{k-2m} (2m)!}{m! 2^{m-k}} 
\leqslant \sum_{m=0}^{\lfloor k/2  \rfloor} ( \delta^{\frac{1}{2}} |x|)^{k-2m} m! 2^{m+k}.
\end{equation*}
Combining with $m! = m!^{2s -\ep}$ where $\ep = 2s-1 > 0$,
this gives for any $h > 0$ and $b > 0$
\begin{align*}
|p_{\lambda,k} (x)| h^{-k} k!^{-s}
& \leqs \sum_{m=0}^{\lfloor k/2  \rfloor} ( \delta^{\frac{1}{2}} \, |x|)^{k-2m} m!^{2s - \ep} 2^{m+k} h^{-k} k!^{-s} \\
& = \sum_{m=0}^{\lfloor k/2  \rfloor} 
\left(\frac{ \left( \frac{b}{s} ( \delta^{\frac{1}{2}} \, |x|) ^{\frac{1}{s}} \right)^{k-2m}}{(k-2m)!}\right)^s 
\left( \frac{b}{s} \right)^{s(2m-k)}
\left( \frac{(k-2m)! m!^2}{k!}\right)^s \frac{2^{m+k} h^{-k}}{m!^\ep} \\
& \leqs e^{b \, \delta^{\frac{1}{2s}} |x|^{\frac{1}{s}} }
\left( 2 \left( \frac{s}{b} \right)^s h^{-1} \right)^{k}
\sum_{m=0}^{\lfloor k/2  \rfloor}
\left(\frac{ \left( 2 \left( \frac{b}{s} \right)^{2s} \right)^{\frac{m}{\ep}}}{m!}\right)^\ep \\
& \leqs 
e^{\ep \left(2 \left( \frac{b}{s} \right)^{2s} \right)^{\frac{1}{\ep}} }
e^{b \, \delta^{\frac{1}{2s}} |x|^{\frac{1}{s}} }
\left( 4 \left( \frac{s}{b} \right)^s h^{-1} \right)^{k} \\
& = 
C_{s,b} \, e^{b \, \delta^{\frac{1}{2s}} |x|^{\frac{1}{s}} }, 
\end{align*}
where $C_{s,b} > 0$, provided $b = s \, 4^{\frac{1}{s}} h^{-\frac{1}{s}}$. 
Thus if $\delta \leqs 4^{-2} \left( \frac{\mu}{s} \right)^{2s}$ then
\begin{equation*}
b \, \delta^{\frac{1}{2s}} = s \, 4^{\frac{1}{s}} \delta^{\frac{1}{2s}} h^{-\frac{1}{s}}
\leqs \mu h^{-\frac{1}{s}}
\end{equation*}
and therefore
\begin{equation*}
|p_{\lambda,k} (x)| 
\lesssim h^{k} k!^{s} e^{\mu \, h^{-\frac{1}{s}} |x|^{\frac{1}{s}} }. 
\end{equation*}
\end{proof}

\begin{cor}\label{cor:gaussdiagonal}
Let $\lambda > 0$ and $s > \frac{1}{2}$.
Suppose $\Lambda \in \cc {2d \times 2d}$ is a diagonal matrix with entries $\lambda_j$ that are bounded as $| \lambda_j | \leqs \lambda$
for all $1 \leqs j \leqs 2d$. 
If $g(z) = e^{\frac{1}{2} \la \Lambda z, z\ra}$, $z \in \rr {2d}$, 
then 
\begin{equation}\label{eq:gaussderivative2d}
\pd \alpha g(z) = p_{\Lambda,\alpha} (z) g(z), \quad \alpha \in \nn{2d}, 
\end{equation}
where $p_{\Lambda,\alpha}$ are polynomials of order $|\alpha|$.
For each $\mu > 0$ there exists $0 < \delta \leqs 1$ such that the polynomials $p_{\Lambda,\alpha} $ satisfy the following estimates provided $\lambda \leqs \delta$:
For any $h > 0$ 
\begin{equation}\label{eq:polynomialestimate1}
|p_{\Lambda,\alpha}(z)| 
\lesssim h^{|\alpha|} \alpha!^s e^{\mu \, h^{-\frac{1}{s}} |z|^{\frac{1}{s}}}, \quad z \in \rr {2d}, \quad \alpha \in \nn {2d}.
\end{equation}
\end{cor}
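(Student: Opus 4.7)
The plan is to exploit the diagonal structure of $\Lambda$ to reduce the multivariate statement to a tensor product of the univariate result in Lemma \ref{lem:gaussderivata}. Since $\Lambda = \operatorname{diag}(\lambda_1, \ldots, \lambda_{2d})$, the Gaussian factorizes as
\begin{equation*}
g(z) = \prod_{j=1}^{2d} e^{\lambda_j z_j^2/2} = \prod_{j=1}^{2d} g_{\lambda_j}(z_j),
\end{equation*}
so $\partial^\alpha g(z) = \prod_{j=1}^{2d} \partial_{z_j}^{\alpha_j} g_{\lambda_j}(z_j)$. Applying \eqref{eq:gaussianderivative} in each coordinate gives \eqref{eq:gaussderivative2d} with
\begin{equation*}
p_{\Lambda,\alpha}(z) = \prod_{j=1}^{2d} p_{\lambda_j,\alpha_j}(z_j),
\end{equation*}
which is a polynomial of total degree $\sum_j \alpha_j = |\alpha|$.

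Next I would apply Lemma \ref{lem:gaussderivata} to each factor with the auxiliary parameter $\mu' := \mu/(2d)$ in place of $\mu$. This yields some $0 < \delta \leqs 1$ such that whenever $|\lambda_j| \leqs \delta$ for all $j$ (which is our hypothesis $\lambda \leqs \delta$), one has for any $h > 0$
\begin{equation*}
|p_{\lambda_j,\alpha_j}(z_j)| \lesssim h^{\alpha_j} \alpha_j!^s e^{\mu' h^{-1/s} |z_j|^{1/s}}, \qquad z_j \in \ro,
\end{equation*}
with implicit constant depending only on $s$ and $\mu'$. Taking the product over $j$, using $\alpha! = \prod_j \alpha_j!$, and absorbing the resulting $(C_{s,\mu'})^{2d}$ into the $\lesssim$ notation, I obtain
\begin{equation*}
|p_{\Lambda,\alpha}(z)| \lesssim h^{|\alpha|} \alpha!^s \exp\!\left(\mu' h^{-1/s} \sum_{j=1}^{2d} |z_j|^{1/s}\right).
\end{equation*}

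The remaining step is the elementary bound $\sum_{j=1}^{2d} |z_j|^{1/s} \leqs 2d\, |z|^{1/s}$, which follows from $|z_j| \leqs |z|$ for each $j$. Substituting and using $2d \mu' = \mu$ gives \eqref{eq:polynomialestimate1} and completes the proof. There is no real obstacle here; the only point to keep track of is that the rescaling $\mu' = \mu/(2d)$ must be carried out \emph{before} invoking Lemma \ref{lem:gaussderivata}, since the admissible $\delta$ depends on the chosen $\mu'$.
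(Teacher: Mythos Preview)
Your argument is correct and is exactly the tensorization that the paper has in mind: the corollary is stated without proof precisely because the diagonal structure makes $g$ a product of one-variable Gaussians, so the result is the $2d$-fold product of Lemma~\ref{lem:gaussderivata} together with the trivial bound $\sum_{j}|z_j|^{1/s}\leqs 2d\,|z|^{1/s}$. Your care in fixing $\mu'=\mu/(2d)$ before invoking the lemma (so that the resulting $\delta$ is chosen once and for all) is the right bookkeeping; note also that the implicit constant in Lemma~\ref{lem:gaussderivata} may depend on $h$, but raising it to the power $2d$ is harmless since $d$ is fixed.
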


\begin{prop}\label{prop:gaussianestimates}
Let $\lambda > 0$ and $\ep > 0$.
Suppose $T_t \in \cc {2d \times 2d}$, $0 \leqs t \leqs \ep$, is a parametrized family of symmetric matrices such that
for all $t \in [0,\ep]$ we have
$\re T_t \leqs 0$, and $\re T_t$ and $\im T_t$ both have eigenvalues in the interval $[-\lambda,\lambda]$.  
Let $a_t(z) = e^{\frac{1}{2} \la T_t z, z\ra}$, $z \in \rr {2d}$ and let $s > \frac{1}{2}$. 
For each $\mu > 0$ there exists $\delta > 0$ such that 
if $\lambda \leqs \delta$
then for any $h > 0$
\begin{equation}\label{eq:gaussderivative1}
|\pd \alpha a_t(z)| 
\lesssim 
h^{|\alpha|} \alpha!^s e^{\mu \, h^{- \frac{1}{s}} |z|^{\frac{1}{s}}}, \quad z \in \rr {2d}, \quad \alpha \in \nn {2d}, \quad 0 \leqs t \leqs \ep. 
\end{equation}
\end{prop}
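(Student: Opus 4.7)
I would reduce the proposition to the diagonal case already handled by Corollary \ref{cor:gaussdiagonal}. The natural obstruction is that a complex symmetric matrix $T_t$ with the spectral constraints imposed cannot, in general, be diagonalized by a real orthogonal transformation; however the real part $A_t = \re T_t$ and the imaginary part $B_t = \im T_t$ are \emph{separately} real symmetric, and each can be orthogonally diagonalized. This suggests the factorization
\[
a_t(z) = b_t(z) \, c_t(z), \qquad b_t(z) = e^{\frac{1}{2}\la A_t z,z\ra}, \qquad c_t(z) = e^{\frac{i}{2}\la B_t z,z\ra},
\]
and then to treat each factor by its own diagonalization plus a Leibniz-type combination.

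My plan is as follows. First, pick $O_A, O_B \in \On(2d)$ with $O_A^T A_t O_A = D_A$ and $O_B^T B_t O_B = D_B$ diagonal; $D_A$ has entries in $[-\lambda,0]$ and $D_B$ in $[-\lambda,\lambda]$. In the rotated coordinates, $\tilde b_t(y) = e^{\frac{1}{2}\la D_A y,y\ra}$ and $\tilde c_t(w) = e^{\frac{i}{2}\la D_B w,w\ra}$ fall under Corollary \ref{cor:gaussdiagonal} (applied to $D_A$ and to $i D_B$, both of which have diagonal entries of modulus $\leqs \lambda$): for any $\mu_0>0$, there is $\delta>0$ such that if $\lambda \leqs \delta$, then for every $h_0>0$,
\[
|\pd\beta_y \tilde b_t(y)| \lesssim h_0^{|\beta|}\beta!^s e^{\mu_0 h_0^{-1/s}|y|^{1/s}}, \qquad
|\pd\gamma_w \tilde c_t(w)| \lesssim h_0^{|\gamma|}\gamma!^s e^{\mu_0 h_0^{-1/s}|w|^{1/s}},
\]
uniformly in $t \in [0,\ep]$.

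Next, I would transport these bounds back to the $z$-variable. Since $b_t(z) = \tilde b_t(O_A^T z)$, applying the chain rule $\partial_{z_j} = \sum_k (O_A)_{jk}\partial_{y_k}$ and expanding multinomially yields
\[
\pd\beta_z b_t(z) = \sum_{|\kappa|=|\beta|} c_{\beta\kappa}(O_A)\,(\pd\kappa_y \tilde b_t)(O_A^T z),
\]
where each coefficient satisfies $|c_{\beta\kappa}(O_A)| \lesssim 2^{|\beta|}$ (using $|(O_A)_{jk}| \leqs 1$), and $|O_A^T z| = |z|$. Combining with $\kappa!^s \leqs |\beta|!^s \leqs (2d)^{s|\beta|}\beta!^s$ and the polynomial number of $\kappa$ with $|\kappa|=|\beta|$, we obtain $|\pd\beta_z b_t(z)| \lesssim h_1^{|\beta|}\beta!^s e^{\mu_0 h_0^{-1/s}|z|^{1/s}}$ for some $h_1 = C h_0$ depending only on $d,s$; the analogous estimate for $c_t$ holds similarly.

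Finally, Leibniz gives $\pd\alpha_z a_t = \sum_{\beta+\gamma=\alpha}\binom{\alpha}{\beta}\pd\beta b_t\cdot\pd\gamma c_t$, so
\[
|\pd\alpha_z a_t(z)| \lesssim h_1^{|\alpha|} e^{2\mu_0 h_0^{-1/s}|z|^{1/s}} \alpha! \sum_{\beta+\gamma=\alpha}(\beta!\gamma!)^{s-1},
\]
using the identity $\binom{\alpha}{\beta}\beta!^s\gamma!^s = \alpha!(\beta!\gamma!)^{s-1}$. The combinatorial sum is controlled uniformly in the sign of $s-1$ via the two-sided bound $\alpha!/2^{|\alpha|} \leqs \beta!\gamma! \leqs \alpha!$, combined with the polynomial count of decompositions $\beta+\gamma=\alpha$, which yields $\sum (\beta!\gamma!)^{s-1} \lesssim C_s^{|\alpha|}\alpha!^{s-1}$. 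Absorbing constants, $|\pd\alpha_z a_t(z)| \lesssim (C_s h_1)^{|\alpha|}\alpha!^s e^{2\mu_0 h_0^{-1/s}|z|^{1/s}}$, and choosing $\mu_0 = \mu/2$ and then $h_0$ so that $C_s h_1 = h$ gives the desired estimate. The $\delta$ of the statement is the one furnished by Corollary \ref{cor:gaussdiagonal} for the parameter $\mu_0 = \mu/2$.

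The main obstacle I expect is the orthogonal invariance of the target Gelfand--Shilov type estimate: one must verify that the chain rule across the real orthogonal transformations introduces only a constant rescaling of $h$ and does not destroy either the factorial structure $\beta!^s$ or the sub-exponential term $e^{\mu h^{-1/s}|z|^{1/s}}$. The combinatorial sum from Leibniz is then the second, more routine, bookkeeping step, the key point being that the extreme values of $\beta!\gamma!$ are separated from $\alpha!$ only by a factor $2^{|\alpha|}$ and can therefore be absorbed into the base $h$.
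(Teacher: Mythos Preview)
Your proposal is correct and follows essentially the same route as the paper: split $a_t$ into the product of $e^{\frac{1}{2}\langle \re T_t z,z\rangle}$ and $e^{\frac{i}{2}\langle \im T_t z,z\rangle}$, orthogonally diagonalize each real symmetric matrix separately, invoke Corollary~\ref{cor:gaussdiagonal} on the diagonal factors, push the estimates through the orthogonal change of variables by the chain rule, and then recombine via Leibniz. One small bookkeeping point: your final choice $\mu_0 = \mu/2$ is not quite sufficient, since after substituting $h_0 = h/(C_s C)$ the exponent becomes $2\mu_0 (C_s C)^{1/s} h^{-1/s}$ rather than $\mu h^{-1/s}$; you need $\mu_0 = \mu / (2(C_s C)^{1/s})$, which is exactly why the paper chooses $\mu_1 = \mu\, 2^{-2-\frac{2}{s}} d^{-1-\frac{1}{s}}$.
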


\begin{proof}
We may factorize $\re T_t = U_t^T \Lambda_t U_t$ where $U_t \in \On(2d)$ and $\Lambda_t \in  \rr {2d \times 2d}$
is diagonal, with the non-positive eigenvalues of $\re T_t$ on the diagonal.  
The coefficients of $U_t$ satisfy the bound 
\begin{equation}\label{eq:coefficientbound}
|(U_t)_{j,k}| \leqs \| U_t \| = 1, \quad 1 \leqs j,k \leqs 2d,  
\end{equation}
where $\| U_t \|$ denotes the operator matrix norm. 

Thus $a_{t,1}(z) = e^{\frac{1}{2} \la \re T_t z, z\ra}  = g_{t,1} (U_t z)$ where $g_{t,1}$ satisfies the assumptions of Corollary \ref{cor:gaussdiagonal}. 
We pick $\delta > 0$ so that the polynomials $p_{\Lambda,\alpha,t,1}$, that correspond to $g_{t,1}$ as in \eqref{eq:gaussderivative2d}, satisfy 
\eqref{eq:polynomialestimate1} with $\mu$ replaced by $\mu_1 = \mu \, 2^{-2-\frac{2}{s}} d^{-1-\frac{1}{s}}$. 
We have
\begin{equation*}
\partial_j a_{t,1}(z) = \sum_{k=1}^{2d} (U_t)_{k,j} \partial_k g_{t,1}(U_tx), \quad 1 \leqs j \leqs 2d.  
\end{equation*}
Taking into account \eqref{eq:coefficientbound}, 
it follows that we may express $\pd \alpha a_{t,1}(z)$ for $\alpha \in \nn {2d}$ as a sum of $(2d)^{|\alpha|}$ terms, 
consisting of coefficients the modulus of which are upper bounded by one, times $\pd \beta g_{t,1} (U_t x)$
where $\beta \in \nn{2d}$ satisfies $|\beta| = |\alpha|$. 

Let $h > 0$. 
We obtain using Corollary \ref{cor:gaussdiagonal}, \cite[Eq.~(0.3.3)]{Nicola1} and the assumption $\re T_t \leqs 0$
\begin{align*}
|\pd \alpha a_{t,1}(z)| 
& \leqs  (2d)^{|\alpha|} \max_{|\beta| = |\alpha|} |\pd \beta g_{t,1} (U_t z)| \\
& \lesssim
(2 d h)^{|\alpha|} |\alpha|!^s e^{\mu_1 \, h^{- \frac{1}{s}} |U_t z|^{\frac{1}{s}}} | g_{t,1}(U_t z) |  \\
& \leqs ( (2 d)^{1+s} h)^{|\alpha|} \alpha!^s e^{\mu_1 \, h^{- \frac{1}{s}} |z|^{\frac{1}{s}}} e^{\frac{1}{2} \la \re T_t z, z\ra} \\
& \leqs ( (2 d)^{1+s} h)^{|\alpha|} \alpha!^s e^{\mu_1 \, h^{- \frac{1}{s}} |z|^{\frac{1}{s}}}, \quad 0 \leqs t \leqs \ep.  
\end{align*}

We apply the same argument to $a_{t,2}(z) = e^{\frac{i}{2} \la \im T_t z, z\ra}$. 
This gives new matrices $U_t \in \On(2d)$ and 
$a_{t,2}(z) = g_{t,2} (U_t z)$ where $g_{t,2}$ again satisfies the assumptions of Corollary \ref{cor:gaussdiagonal}. 
We obtain
\begin{align*}
|\pd \alpha a_{t,2}(z)| 
& \lesssim
( (2 d)^{1+s} h)^{|\alpha|} \alpha!^s e^{\mu_1 \, h^{- \frac{1}{s}} |z|^{\frac{1}{s}}} | a_{t,2}(z) |  \\
& = ( (2 d)^{1+s} h)^{|\alpha|} \alpha!^s e^{\mu_1 \, h^{- \frac{1}{s}} |z|^{\frac{1}{s}}}, \quad 0 \leqs t \leqs \ep.
\end{align*}
Finally Leibniz' rule gives 
\begin{align*}
|\pd \alpha a_t(z)| 
& = |\pd \alpha \left( a_{t,1} (z) \, a_{t,2}(z) \right)| \\
& \leqs \sum_{\beta \leqs \alpha} \binom{\alpha}{\beta} |\partial^{\alpha-\beta} a_{t,1}(z) | \, | \pd \beta a_{t,2}(z)| \\
& \lesssim \sum_{\beta \leqs \alpha} \binom{\alpha}{\beta} 
( (2 d)^{1+s} h)^{|\alpha-\beta|+|\beta|} (\alpha-\beta)!^s \beta!^s e^{2 \mu_1 \, h^{- \frac{1}{s}}  |z|^{\frac{1}{s}}} \\
& \leqs ( 2^{2+s} d^{1+s} h)^{|\alpha|} \alpha!^s e^{2 \mu_1 \, h^{- \frac{1}{s}} |z|^{\frac{1}{s}}}, \quad 0 \leqs t \leqs \ep. 
\end{align*}
The result now follows by replacing $2^{2+s} d^{1+s} h$ by $h$. 
\end{proof}

\begin{lem}\label{lem:symbolSTFTestimates}
Let $\ep > 0$ and $s > \frac{1}{2}$. 
Suppose that $a_t \in C^{\infty}(\rr {2d})$ is a family of functions parametrized by $t \in [0,\ep]$ 
that for any $h > 0$ satisfy the estimates 
\begin{equation*}
|\pd \alpha a_t(z)| 
\lesssim 
h^{|\alpha|} \alpha!^s e^{\mu \, h^{- \frac{1}{s}} |z|^{\frac{1}{s}}}, \quad z \in \rr {2d}, \quad \alpha \in \nn {2d}, \quad 0 \leqs t \leqs \ep, 
\end{equation*}
where $\mu = s \, 2^{ -4-\frac{3}{2s} } d^{- \frac{1}{2s} }$. 
Let $\Phi \in \Sigma_s(\rr {2d}) \setminus 0$. 
Then for any $b > 0$ there exists $C_b > 0$ such that
\begin{equation*}
\left| V_\Phi a_t (z,\zeta) \right|
\leqs C_b \, e^{\frac{b}{4} |z|^{\frac{1}{s}} - b |\zeta|^{\frac{1}{s}}}, \quad z, \zeta \in \rr {2d}, \quad 0 \leqs t \leqs \ep.   
\end{equation*}
\end{lem}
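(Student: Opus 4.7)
The plan is to combine integration by parts in the defining integral for $V_\Phi a_t$ with the standard fact that $\Phi \in \Sigma_s(\rr{2d})$ admits, for every $k, r > 0$, pointwise derivative estimates
\[
|D^\beta \Phi(x)| \lesssim k^{|\beta|} \beta!^s \, e^{-r|x|^{\frac{1}{s}}}, \qquad x \in \rr{2d}, \quad \beta \in \nn{2d},
\]
a consequence of the defining seminorms \eqref{eq:seminormSigmas}. First I would write
\[
V_\Phi a_t(z,\zeta) = (2\pi)^{-d} \int_{\rr{2d}} a_t(y) \, \overline{\Phi(y-z)} \, e^{-i\la y, \zeta\ra} \, dy,
\]
and use the identity $\zeta^\alpha e^{-i\la y, \zeta\ra} = (-1)^{|\alpha|} D_y^\alpha e^{-i\la y, \zeta\ra}$ together with integration by parts in $y$ to obtain, for every $\alpha \in \nn{2d}$,
\[
\zeta^\alpha V_\Phi a_t(z,\zeta) = (2\pi)^{-d} \int_{\rr{2d}} D_y^\alpha \bigl[ a_t(y) \overline{\Phi(y-z)} \bigr] \, e^{-i\la y, \zeta\ra} \, dy.
\]

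Applying the Leibniz rule, the hypothesis on $a_t$ with parameter $h$, the $\Phi$-estimate with parameters $k$ and $r$, and using the first inequality in \eqref{eq:exppeetre1} to split
\[
e^{\mu h^{-1/s} |y|^{1/s}} \leqs e^{2 \mu h^{-1/s} |z|^{1/s}} \, e^{2 \mu h^{-1/s} |y-z|^{1/s}},
\]
the $y$-integral converges uniformly in $z$ as soon as $r > 2\mu h^{-1/s} + 1$. Combined with $(\alpha-\beta)! \beta! \leqs \alpha!$ and the multi-index identity $\sum_{\beta \leqs \alpha} \binom{\alpha}{\beta} h^{|\alpha-\beta|} k^{|\beta|} = (h+k)^{|\alpha|}$ this yields
\[
|\zeta^\alpha V_\Phi a_t(z,\zeta)| \lesssim (h+k)^{|\alpha|} \alpha!^s \, e^{2 \mu h^{-\frac{1}{s}} |z|^{\frac{1}{s}}}, \qquad \alpha \in \nn{2d},
\]
with constant independent of $t \in [0,\ep]$. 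Picking an index $j$ with $|\zeta_j| = \max_i |\zeta_i| \geqs (2d)^{-1/2} |\zeta|$ and specializing to $\alpha = m e_j$ for $m \in \no$ gives
\[
|\zeta|^m \, |V_\Phi a_t(z,\zeta)| \lesssim c^m \, m!^s \, e^{2 \mu h^{-\frac{1}{s}} |z|^{\frac{1}{s}}}, \qquad c := (2d)^{\frac{1}{2}}(h+k).
\]

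Optimizing over $m \in \no$ by Stirling's formula (the minimum of $c^m m!^s/|\zeta|^m$ is attained at $m \sim (|\zeta|/c)^{1/s}$ with value of order $e^{-s c^{-1/s}|\zeta|^{1/s}}$) produces
\[
|V_\Phi a_t(z,\zeta)| \lesssim e^{-s c^{-\frac{1}{s}} |\zeta|^{\frac{1}{s}}} \, e^{2 \mu h^{-\frac{1}{s}} |z|^{\frac{1}{s}}}.
\]
It remains to choose $h, k$ so that $2\mu h^{-1/s} = b/4$ and $s c^{-1/s} \geqs b$. With $h = (8\mu/b)^s$ the prescribed value $\mu = s \, 2^{-4 - \frac{3}{2s}} d^{-\frac{1}{2s}}$ yields exactly $(2d)^{1/2} h = 2^{-1-s}(s/b)^s < (s/b)^s$, leaving room for a small $k > 0$ with $c = (2d)^{1/2}(h+k) \leqs (s/b)^s$, which is equivalent to $s c^{-1/s} \geqs b$. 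The main obstacle is precisely this calibration: the particular coefficient $\mu$ in the hypothesis is tailored to make both exponent conditions simultaneously achievable, while the asymmetric factor $1/4$ between the $|z|^{1/s}$ and $|\zeta|^{1/s}$ exponents comes from the factor $2$ lost when applying \eqref{eq:exppeetre1}.
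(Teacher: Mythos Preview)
Your proof is correct and follows essentially the same route as the paper: integration by parts, Leibniz, and \eqref{eq:exppeetre1} give the identical intermediate bound $|\zeta^\alpha V_\Phi a_t(z,\zeta)| \lesssim (h+k)^{|\alpha|}\alpha!^s e^{2\mu h^{-1/s}|z|^{1/s}}$, after which the paper converts to exponential decay in $\zeta$ by expanding $e^{(b/s)|\zeta|^{1/s}}$ as a power series while you instead optimize $c^m m!^s/|\zeta|^m$ over $m$ via Stirling---these are equivalent devices, and your parameter calibration (with the paper's $h_1,h_2$ playing the roles of your $h,k$) matches. One small point: the Stirling infimum carries a polynomial prefactor in $|\zeta|$, so you actually need $sc^{-1/s}>b$ strictly rather than $\geqs$, but your choice $(2d)^{1/2}h=2^{-1-s}(s/b)^s$ leaves room for that.
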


\begin{proof}
We will use the fact that 
\begin{equation*}
f \mapsto \sup_{x \in \rr d, \ \beta \in \nn d} \beta!^{-s} A^{|\beta|} e^{A |x|^{\frac{1}{s}}} |\pd \beta f(x)|
\end{equation*}
for all $A > 0$ is a family of seminorms for $\Sigma_s(\rr d)$, equivalent to \eqref{eq:seminormSigmas} for all $h > 0$
(cf. \cite[Proposition~3.1]{Carypis1}). 

Integration by parts and \eqref{eq:exppeetre1}
gives for any $h_1, h_2 > 0$
\begin{align*}
\left| \zeta^\alpha V_\Phi a_t (z,\zeta) \right|
& = (2 \pi)^{-d}  \left| \int_{\rr {2d}} a_t(w) \pdd w \alpha \left( e^{- i \la \zeta, w\ra} \right) \overline{\Phi(w-z)} \, \dd w  \right| \\
& \leqs (2 \pi)^{-d} \sum_{\beta \leqs \alpha} \binom{\alpha}{\beta}  
\int_{\rr {2d}} \left| \pd \beta a_t (w) \right| \, \left| \partial^{\alpha - \beta} \Phi (w-z) \right| \, \dd w \\
& \lesssim \sum_{\beta \leqs \alpha} \binom{\alpha}{\beta}
h_1^{|\beta|} h_2^{|\alpha-\beta|} \beta!^s (\alpha-\beta)!^s 
\int_{\rr {2d}} e^{\mu \, h_1^{- \frac{1}{s}}  |w|^{\frac{1}{s}} } e^{ - h_2^{-1} |w-z|^{\frac{1}{s}}} \, \dd w \\
& \leqs
\alpha!^s e^{2 \mu \, h_1^{- \frac{1}{s}} |z|^{\frac{1}{s}} }
\sum_{\beta \leqs \alpha} \binom{\alpha}{\beta}
h_1^{|\beta|} h_2^{|\alpha-\beta|}  
\int_{\rr {2d}} e^{ (2 \mu \, h_1^{- \frac{1}{s}} - h_2^{-1}) |w-z|^{\frac{1}{s}}} \, \dd w \\
& \lesssim 
\alpha!^s (h_1+h_2)^{|\alpha|} e^{2 \mu \, h_1^{- \frac{1}{s}} |z|^{\frac{1}{s}} }, \quad z,\zeta \in \rr {2d}, \quad \alpha \in \nn {2d}, \quad 0 \leqs t \leqs \ep, 
\end{align*}
provided $h_2^{-1} > 2  \mu \, h_1^{- \frac{1}{s}}$. 

Let $b > 0$. 
Using 
\begin{equation*}
|\zeta|^n \leqs ( 2 d) ^{\frac{n}{2}} \max_{|\alpha| = n} |\zeta^\alpha|
\end{equation*}
we obtain 
\begin{align*}
e^{\frac{b}{s} |\zeta|^{\frac{1}{s}}} \left| V_\Phi a_t (z,\zeta) \right|^{\frac{1}{s}}
& = 
\sum_{n=0}^{\infty} 2^{-n} n!^{-1} \left( \frac{2 b}{s} |\zeta|^{\frac{1}{s}} \right)^n 
\left| V_\Phi a_t (z,\zeta) \right|^{\frac{1}{s}} \\
& \leqs 
2 \left( \sup_{n \geqs 0}  n!^{-s} \left( \left( \frac{2 b}{ s} \right)^s |\zeta| \right)^n  \left| V_\Phi a_t (z,\zeta) \right| \right)^{\frac{1}{s}} \\
& \lesssim 
\left( \sup_{n \geqs 0}  \left( \left( \frac{2 b}{s} \right)^s (2 d)^{\frac{1}{2}} \right)^n \max_{|\alpha| = n} 
\frac{\left| \zeta^\alpha V_\Phi a_t (z,\zeta) \right|}{n!^s} \right)^{\frac{1}{s}} \\
& \lesssim 
e^{\frac{1}{s} 2 \mu \, h_1^{- \frac{1}{s}} |z|^{\frac{1}{s}} }
\left( \sup_{n \geqs 0}  \left( \left( \frac{2 b}{s} \right)^s (2 d)^{\frac{1}{2}} (h_1+h_2) \right)^n  \right)^{\frac{1}{s}}. 
\end{align*}
The result now follows provided the following three conditions are true: 
\begin{align}
& 2 \mu \, h_1^{- \frac{1}{s}} 
= \frac{b}{4}, \label{eq:cond1} \\
& h_2^{-1} > 2 \mu \, h_1^{- \frac{1}{s}} , \label{eq:cond2} \\ 
& \left( \frac{2 b}{s} \right)^s (2 d)^{\frac{1}{2}} (h_1+h_2) \leqs 1. \label{eq:cond3}
\end{align}
We first pick 
\begin{equation*}
h_1 = \frac{1}{2} \left( \frac{s}{2b} \right)^s (2 d)^{-\frac{1}{2}}
= s^s 2^{-\frac{3}{2}  - s} d^{-\frac{1}{2}} b^{-s}
\end{equation*}
which means that \eqref{eq:cond1} is satisfied. 
Since
\begin{equation*}
\left( \frac{2 b}{s} \right)^s (2 d)^{\frac{1}{2}} h_1  = \frac{1}{2}, 
\end{equation*}
we may pick $h_2 > 0$ sufficiently small so that \eqref{eq:cond2} and \eqref{eq:cond3}
are satisfied. 
\end{proof}

Finally we are in a position to prove that estimates for a family of symbols as required in Lemma \ref{lem:symbolSTFTestimates}
give rise to operators that are uniformly bounded on $\Sigma_s(\rr d)$. 
It is interesting to compare this result with 
\cite[Theorem~4.10]{Cappiello2}.
The conditions that are sufficient for continuity given in \cite[Theorem~4.10]{Cappiello2} and here are quite similar,  
but neither condition implies the other. 

\begin{prop}\label{prop:continuitySigmas}
Suppose $s > \frac{1}{2}$ and $\ep > 0$. 
Let $a_t \in C^{\infty}(\rr {2d})$ be a family of functions parametrized by $t \in [0,\ep]$, 
that for any $h > 0$ satisfy the estimates 
\begin{equation*}
|\pd \alpha a_t(z)| 
\lesssim 
h^{|\alpha|} \alpha!^s e^{\mu \, h^{-\frac{1}{s}} |z|^{\frac{1}{s}}}, \quad z \in \rr {2d}, \quad \alpha \in \nn {2d}, \quad 0 \leqs t \leqs \ep, 
\end{equation*}
where $\mu = s \, 2^{ -4-\frac{3}{2s} } d^{- \frac{1}{2s} }$. 
Then for any $h > 0$ there exists $h_1 = h_1(h) > 0$ and $C = C_h > 0$ such that 
\begin{equation*}
\| a_t^w(x,D) f \|_h \leqs C \| f \|_{h_1}, \quad 0 \leqs t \leqs \ep, \quad f \in \Sigma_s(\rr d). 
\end{equation*}
\end{prop}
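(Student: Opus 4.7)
The plan is to reduce the claim to the symbol STFT bound already proved in Lemma \ref{lem:symbolSTFTestimates} by representing $a_t^w(x,D)$ through its Gabor matrix with respect to a $\Sigma_s$-window, and then converting the resulting pointwise estimate back to an $L^2$-seminorm via Lemma \ref{lem:seminorms}. Fix $\fy \in \Sigma_s(\rr d) \setminus \{0\}$ with $\| \fy \|_{L^2} = 1$ and set $\Phi_0 := W(\fy,\fy)$. By the invariance of $\Sigma_s$ under the Wigner transform recalled in Section \ref{sec:prel}, $\Phi_0 \in \Sigma_s(\rr {2d}) \setminus \{0\}$, so $\Phi_0$ is an admissible window for Lemma \ref{lem:symbolSTFTestimates}. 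By Lemma \ref{lem:seminorms} and the standard STFT characterization of $\Sigma_s(\rr d)$, it suffices to show: for every $\lambda > 0$ there exist $h_1 > 0$ and $C>0$ with
\[
\sup_{z \in \rr {2d}} e^{\lambda |z|^{1/s}} |V_\fy (a_t^w(x,D) f)(z)| \leqs C \, \| f \|_{h_1}, \quad 0 \leqs t \leqs \ep, \ f \in \Sigma_s(\rr d).
\]

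Starting from the reproducing formula \eqref{eq:STFTinverse} for $f$ and applying \eqref{eq:wignerweyl} to each Gabor atom, one arrives at the Gabor matrix representation
\[
V_\fy (a_t^w(x,D) f)(z) = \int_{\rr {2d}} H_t(z,w) \, V_\fy f(w) \, dw,
\qquad
H_t(z,w) = (2\pi)^{-d}\bigl(a_t, W(\pi(z)\fy, \pi(w)\fy)\bigr).
\]
The covariance of the Wigner distribution writes $W(\pi(z)\fy, \pi(w)\fy)(Y)$ as a unimodular phase of the form $e^{i \la \J(z-w), Y\ra}$ times a fixed phase in $(z,w)$, multiplied by $\Phi_0(Y - (z+w)/2)$. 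Inserting this and recognizing the resulting integral as a STFT gives, up to a unimodular factor,
\[
|H_t(z,w)| = (2\pi)^{-d/2}\, \Bigl| V_{\Phi_0} a_t\Bigl( \tfrac{z+w}{2}, \, \J(z-w) \Bigr) \Bigr|.
\]
Now Lemma \ref{lem:symbolSTFTestimates}, applied with window $\Phi_0$ and arbitrary $b>0$, yields, uniformly in $t \in [0,\ep]$,
\[
|H_t(z,w)| \lesssim \exp\!\left( \tfrac{b}{4} \bigl| \tfrac{z+w}{2} \bigr|^{1/s} - b \, |z-w|^{1/s} \right).
\]

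For the second factor in the integrand, the STFT characterization of $\Sigma_s(\rr d)$ shows that for every $\kappa>0$ one can pick $h_1>0$ with $|V_\fy f(w)| \leqs C_1 \| f\|_{h_1}\, e^{-\kappa |w|^{1/s}}$. Given a target $\lambda >0$, I choose $b$ so large that $b \geqs 8\lambda$ and then $\kappa$ so large that $\kappa \geqs 2b + b/2 + \lambda + 1$. Applying the first inequality of \eqref{eq:exppeetre1} twice — once to split $|(z+w)/2|^{1/s} \leqs 2(|z|^{1/s} + |w|^{1/s})$ and once to isolate $|w|^{1/s}$ from $|z-w|^{1/s}$ via the second inequality of \eqref{eq:exppeetre1} — the remaining integral $\int e^{-c|w-z|^{1/s}} dw$ is finite uniformly in $z$, and we conclude $|V_\fy(a_t^w f)(z)| \leqs C \| f\|_{h_1} e^{-\lambda |z|^{1/s}}$ uniformly in $t \in [0,\ep]$. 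Translating this exponential STFT decay back through Lemma \ref{lem:seminorms} gives the required estimate $\| a_t^w(x,D) f\|_h \leqs C \| f \|_{h_1}$.

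The main obstacle is the verification of the Gabor matrix identity $|H_t(z,w)| = (2\pi)^{-d/2} |V_{\Phi_0} a_t((z+w)/2, \J(z-w))|$, i.e.\ the covariance of the Wigner transform and the precise affine dependence of its shift on $(z,w)$, together with the bookkeeping of the free constants $b, \kappa$ needed to absorb the $|z|^{1/s}$ contribution from the first exponential. Once this identity and the constant chase are in place, the rest of the argument is a routine combination of Lemma \ref{lem:symbolSTFTestimates}, the Peetre-type inequalities \eqref{eq:exppeetre1}, and the equivalence of seminorms granted by Lemma \ref{lem:seminorms}.
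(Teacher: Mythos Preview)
Your strategy is essentially the paper's: both routes pass through the Weyl--Wigner identity \eqref{eq:wignerweyl}, invoke Lemma~\ref{lem:symbolSTFTestimates} for the window $\Phi_0=W(\fy,\fy)$, and close with the Peetre-type inequalities \eqref{eq:exppeetre1} and the STFT seminorm characterization of $\Sigma_s$. The organizational difference is that the paper expands $a_t$ via \eqref{eq:STFTinverse}, obtaining a $\rr{4d}$-integral that factorizes (Gr\"ochenig, Lemma~14.5.1) as $|V_{\Phi_0} W(\Pi(w)\fy,f)(z,\zeta)|=|V_\fy f(z+\tfrac12\J\zeta)|\,|V_\fy\fy(z-w-\tfrac12\J\zeta)|$; the extra factor $|V_\fy\fy|$, having arbitrary exponential decay, is what absorbs the growing term $e^{\frac{b}{2}|z|^{1/s}}$ and produces the decay in $w$. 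Your Gabor-matrix route expands $f$ instead and integrates only over $\rr{2d}$, which is neater but forfeits that extra decaying factor.

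This matters for your constant chase, which as written does not close. The crude bound $|(z+w)/2|^{1/s}\leqs 2(|z|^{1/s}+|w|^{1/s})$ gives a contribution $+\tfrac{b}{2}|z|^{1/s}$ from the symbol side, while the most you can extract from $-b|z-w|^{1/s}$ via \eqref{eq:exppeetre1} is $-\tfrac{b}{2}|z|^{1/s}+b|w|^{1/s}$; the $|z|$-terms cancel exactly and you get no decay in $z$ (and nothing is left for integrability). The fix is to keep the factor $\tfrac12$: since $|(z+w)/2|^{1/s}=2^{-1/s}|z+w|^{1/s}\leqs 2^{1-1/s}(|z|^{1/s}+|w|^{1/s})$, the symbol contributes only $c|z|^{1/s}$ with $c=b\,2^{-1-1/s}<b/2$. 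Splitting $-b|z-w|^{1/s}=-\delta|z-w|^{1/s}-(b-\delta)|z-w|^{1/s}$, keeping the first piece for integrability and applying \eqref{eq:exppeetre1} to the second, yields a net $|z|$-coefficient $c-\tfrac{b-\delta}{2}=-\tfrac{b}{2}(1-2^{-1/s})+\tfrac{\delta}{2}$, which is $\leqs-\lambda$ for $b$ large; then choose $\kappa$ to make the $|w|$-coefficient nonpositive. With this correction your argument goes through and is equivalent to the paper's.
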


\begin{proof}
Let $\fy \in \Sigma_s(\rr d)$ be such that $\Phi = W(\fy,\fy) \in \Sigma_s(\rr {2d})$ satisfies $\| \Phi \|_{L^2} = 1$. 
We use the Weyl quantization formula \eqref{eq:wignerweyl}, involving the 
Wigner distribution \eqref{eq:wignerdistribution}, and \eqref{eq:STFTinverse}.  
This gives for $f,g \in \Sigma_s(\rr d)$ and $w \in \rr {2d}$
\begin{equation}\label{eq:weylSTFT}
(a_t^w(x,D) f, \Pi(w) g) 
= (2 \pi)^{-\frac{d}{2}}  (a_t, W( \Pi(w) g, f) )
= (2 \pi)^{-\frac{d}{2}}  (V_{\Phi} a_t, V_\Phi W( \Pi(w) g, f) ). 
\end{equation}

Since $\Phi = W(\fy,\fy)$ we obtain from \cite[Lemma~14.5.1 and Lemma~3.1.3]{Grochenig1}
\begin{align*}
|V_\Phi W( \Pi(w) \fy, f) (z,\zeta) | 
& = \left| V_\fy f \left(z + \frac1{2} \J \zeta \right) \right| \, \left| V_\fy \left(\Pi(w) \fy  \right) \left(z - \frac1{2} \J \zeta \right) \right| \\ 
& = \left| V_\fy f \left(z + \frac1{2} \J \zeta \right) \right| \, \left| V_\fy \fy  \left(z - w - \frac{1}{2} \J \zeta \right) \right|. 
\end{align*}
Inserting this into \eqref{eq:weylSTFT}, using 
Lemma \ref{lem:symbolSTFTestimates}
and \eqref{eq:exppeetre1} we obtain for any $b > 0$
\begin{equation}\label{eq:STFTestimate1}
\begin{aligned}
|V_\fy (a_t^w(x,D) f) (w) |
& = (2 \pi)^{-\frac{d}{2}}  |(a_t^w(x,D) f, \Pi(w) \fy)| \\
& \leqs  (2 \pi)^{-d} \int_{\rr {4d}}  |V_{\Phi} a_t (z,\zeta) | \,  | V_\Phi W( \Pi(w) \fy, f) (z,\zeta) | \, \dd z \, \dd \, \zeta \\
& \leqs C_b \int_{\rr {4d}} e^{ \frac{b}{4} |z|^{\frac{1}{s}} - b | \zeta |^{\frac{1}{s}} } \,  \left| V_\fy f \left(z + \frac1{2} \J \zeta \right) \right| \, \left| V_\fy \fy  \left(z - w - \frac{1}{2} \J \zeta \right) \right| \, \dd z \, \dd \zeta \\
& = C_b \int_{\rr {4d}}  e^{ \frac{b}{4} \left| z -  \frac1{2} \J \zeta \right|^{\frac{1}{s}} - b | \zeta |^{\frac{1}{s}} } \,  \left| V_\fy f (z) \right| \, \left| V_\fy \fy  \left(z - w - \J \zeta \right) \right| \, \dd z \, \dd \zeta \\
& \leqs C_b \int_{\rr {4d}}  e^{ \frac{b}{2} \left| z \right|^{\frac{1}{s}} - b \left( 1 - 2^{-1 -\frac{1}{s}} \right) | \zeta |^{\frac{1}{s}} } \,  \left| V_\fy f (z) \right| \, \left| V_\fy \fy \left(z - w - \J \zeta \right) \right| \, \dd z \, \dd \zeta. 
\end{aligned}
\end{equation}
The estimate is uniform with respect to $t \in [0,\ep]$. 

Next we use the seminorms on $\Sigma_s(\rr d)$ defined by 
\begin{equation}\label{eq:seminormGS3}
\Sigma_s(\rr d) \ni f \mapsto \| f \|_A'' =  \sup_{z \in \rr {2d}} e^{A |z|^{\frac{1}{s}}} |V_\fy f (z) |, \quad A > 0, 
\end{equation}
where $\fy \in \Sigma_s(\rr d) \setminus \{ 0 \}$ is fixed but arbitrary
(cf. \cite[Propositon~3.1]{Carypis1}). 
Using $2^{-1 - \frac{1}{s}} < 2^{-1}$ and again \eqref{eq:exppeetre1}
we obtain for any $a > 0$
\begin{equation}\label{eq:STFTestimate2}
\begin{aligned}
& |V_\fy (a_t^w(x,D) f) (w) | \\
& \leqs C_b \| f \|_{b + 2 a}'' \| \fy \|_{4 a}'' 
\int_{\rr {4d}} e^{-\left( \frac{b}{2} + 2 a \right) \left| z \right|^{\frac{1}{s}} - b \left( 1 - 2^{-1 -\frac{1}{s}} \right) | \zeta |^{\frac{1}{s}}  - 4 a \left| z - w - \J \zeta \right|^{\frac{1}{s}} }  \, \dd z \, \dd \zeta \\
& \leqs C_b \| f \|_{b + 2 a}'' \| \fy \|_{4 a}'' 
\int_{\rr {4d}} e^{-\left( \frac{b}{2} + 2 a \right) \left| z \right|^{\frac{1}{s}} - \frac{b}{2} | \zeta |^{\frac{1}{s}}  - 4 a \left| z - w - \J \zeta \right|^{\frac{1}{s}} }  \, \dd z \, \dd \zeta \\
& \leqs C_b \| f \|_{b + 2a}'' \| \fy \|_{4 a}'' 
\int_{\rr {4d}} e^{-\left( \frac{b}{2} + 2 a \right) \left| z \right|^{\frac{1}{s}} - \left( \frac{b}{2}  - 4 a \right) | \zeta |^{\frac{1}{s}}  - 2a  \left| z - w \right|^{\frac{1}{s}} }  \, \dd z \, \dd \zeta \\
& \leqs C_b \| f \|_{b + 2 a}'' \| \fy \|_{4 a}'' 
\, e^{- a \left| w \right|^{\frac{1}{s}} }
\int_{\rr {4d}} e^{-\frac{b}{2}  \left| z \right|^{\frac{1}{s}} - \left( \frac{b}{2}  - 4 a \right) | \zeta |^{\frac{1}{s}}  }  \, \dd z \, \dd \zeta, \quad w \in \rr {2d}. 
\end{aligned}
\end{equation}
Let $B > 0$ be arbitrary. If we first pick $a \geqs B$ and then $b > 8 a$ we obtain 
\begin{equation}\label{eq:STFTestimate3}
\begin{aligned}
\| a_t^w(x,D) f \|_{B}'' 
& = \sup_{w \in \rr {2d}} e^{B | w |^{\frac{1}{s}}} |V_\fy (a_t^w(x,D) f) (w) |
& \leqs C \| f \|_{b + 2 a}'' 
\end{aligned}
\end{equation}
for a constant $C > 0$ and for all $t \in [0,\ep]$. 

Finally we combine Lemma \ref{lem:seminorms} and \cite[Proposition~3.1]{Carypis1}, which admits the conclusion that 
the seminorms \eqref{eq:seminormGS3} are equivalent to the seminorms $\| \cdot \|_h$ for $h > 0$, defined in \eqref{eq:seminormL2}. 
This implies the claim. 
\end{proof}

We have reached a point at which we may prove the theorem for which Lemma \ref{lem:gaussderivata}, 
Corollary \ref{cor:gaussdiagonal}, Proposition \ref{prop:gaussianestimates}, 
Lemma \ref{lem:symbolSTFTestimates}, and Proposition \ref{prop:continuitySigmas} are preparations. 

\begin{thm}\label{thm:contGS}
Let $\re Q \geqs 0$, $s > \frac{1}{2}$ and $T>0$. 
For every $h > 0$ there exists $h_1 = h_1(h) > 0$ and $C = C_{T,h} > 0$ such that 
\begin{equation*}
\| \cK_t f \|_{h} \leqs C \| f \|_{h_1}, \quad 0 \leqs t \leqs T, \quad f \in \Sigma_s(\rr d). 
\end{equation*}
\end{thm}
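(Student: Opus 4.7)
The plan is to combine a Mehler-type formula for the Weyl symbol of the propagator near $t=0$ with Propositions \ref{prop:gaussianestimates} and \ref{prop:continuitySigmas}, and then iterate the semigroup property to cover $[0,T]$.

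First I would extract from H\"ormander's analysis the following fact: there exist $\ep_0 > 0$, a continuous family of symmetric matrices $A_t \in \cc{2d \times 2d}$ on $[0,\ep_0)$ with $A_0 = 0$, and a continuous scalar $c(t)$ with $c(0) = 1$, such that the Weyl symbol of $\cK_t = e^{-tq^w(x,D)}$ is
\begin{equation*}
a_t(z) \,=\, c(t)\, e^{\frac{1}{2}\la A_t z, z\ra}, \qquad z \in \rr{2d}, \quad 0 \leqs t < \ep_0,
\end{equation*}
with $\re A_t \leqs 0$ as a consequence of $\re Q \geqs 0$. This is a Mehler-type formula for the quadratic semigroup and is implicit in \cite[Section~5]{Hormander2}, where the propagator is realized as an FIO associated with the positive Lagrangian $\mathcal G(e^{-2itF})$; for small $t$ that Lagrangian lies in the range of the Cayley-type parametrization that identifies the propagator with a Gaussian Weyl operator.

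Next, with $\mu = s\, 2^{-4-\frac{3}{2s}} d^{-\frac{1}{2s}}$ as in Lemma \ref{lem:symbolSTFTestimates}, let $\delta > 0$ be the threshold provided by Proposition \ref{prop:gaussianestimates}. By continuity of $A_t$ and $A_0 = 0$, there exists $\ep \in (0,\ep_0)$ such that the eigenvalues of $\re A_t$ and $\im A_t$ all lie in $[-\delta,\delta]$ for $t \in [0,\ep]$, while $|c(t)|$ is bounded on this interval. Proposition \ref{prop:gaussianestimates} then supplies the derivative estimates required by Proposition \ref{prop:continuitySigmas}, and absorbing the bounded scalar $c(t)$ into the constant yields: for every $h > 0$ there are $h_1 > 0$ and $C > 0$ with
\begin{equation*}
\|\cK_t f\|_h \,\leqs\, C\,\|f\|_{h_1}, \qquad 0 \leqs t \leqs \ep,\quad f \in \Sigma_s(\rr d).
\end{equation*}

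Finally I would pass from $[0,\ep]$ to $[0,T]$ by iterating the semigroup identity $\cK_{t_1+t_2} = \cK_{t_1}\cK_{t_2}$, which holds on $L^2 \supseteq \Sigma_s(\rr d)$ by H\"ormander's $L^2$ theory. Choose $N \in \no$ with $T/N \leqs \ep$, write an arbitrary $t \in [0,T]$ as $t = k(T/N) + r$ with $k \leqs N$ and $r \in [0,T/N]$, so that $\cK_t = \cK_r\, \cK_{T/N}^k$. Composing the estimate above $k+1 \leqs N+1$ times, at each step feeding the output seminorm index of one factor into the input of the next, produces the desired bound uniformly in $t \in [0,T]$, with constants depending on $T$ and $h$.

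The main obstacle is the first step: extracting from H\"ormander's FIO description the explicit Gaussian form of the Weyl symbol of $\cK_t$ together with the smallness and sign of $A_t$ near $t = 0$. Once this Mehler-type representation is in place, what remains is a clean application of the preparatory results followed by a finite iteration.
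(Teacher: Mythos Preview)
Your proposal is correct and follows essentially the same route as the paper: reduce to small $t$ via the semigroup law, invoke H\"ormander's Mehler formula for the Weyl symbol of $\cK_t$ as a Gaussian $c(t)\,e^{\frac12\la T_t z,z\ra}$ with $T_t\to 0$ and $\re T_t\leqs 0$, feed this into Propositions~\ref{prop:gaussianestimates} and~\ref{prop:continuitySigmas}, and then iterate. The only refinement is that the Gaussian Weyl symbol and the sign condition $\re T_t\leqs 0$ are not merely implicit in the FIO construction of \cite[Section~5]{Hormander2} but are stated explicitly as \cite[Theorem~4.3]{Hormander2} and \cite[Theorem~4.6]{Hormander2}, so your ``main obstacle'' is in fact already handled there.
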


\begin{proof}
It suffices to show the following statement. 
There exists $\ep > 0$ such that for any 
$h > 0$ there exists $h_1 = h_1 (h) > 0$ and $C = C (h) > 0$ such that
\begin{equation}\label{eq:contsmallt}
\| \cK_t f \|_{h} \leqs C \| f \|_{h_1}, \quad 0 \leqs t \leqs \ep. 
\end{equation}
In fact, suppose that \eqref{eq:contsmallt} holds, 
for given $\ep > 0$, all $h > 0$ and some $C, h_1 > 0$. 
Take $n \in \no$ such that $ n \geqs T \ep^{-1}$, which implies $t/n \leqs \ep$ for $0 \leqs t \leqs T$. 
We use the semigroup property $\cK_{t_1+t_2} = \cK_{t_1} \cK_{t_2}$ for $t_1, t_2 \geqs 0$. 
Thus we obtain 
from \eqref{eq:contsmallt} the existence of $C_1, C_2, \cdots C_n > 0$ and $h_1, h_2, \cdots h_n > 0$
\begin{align*}
\| \cK_t f \|_{h} 
& = \| (\cK_{t/n})^n f \|_{h}
\leqs C_1 \| (\cK_{t/n})^{n-1} f \|_{h_1}
\leqs C_1 C_2 \| (\cK_{t/n})^{n-2} f \|_{h_2} \\
& \leqs C_1 C_2 \cdots C_n \| f \|_{h_n}, \quad 0 \leqs t \leqs T, 
\end{align*}
which implies the claim of the theorem. 

Thus we may concentrate on the proof of \eqref{eq:contsmallt} for some $\ep > 0$, and for all $h > 0$, some $h_1 = h_1 (h) > 0$ and some $C = C(h) > 0$. 
We express $\cK_t$ as a Weyl operator \eqref{eq:weylquantization} as $\cK_t = a_t^w(x,D)$. 
Then we can benefit from H\"ormander's \cite[Theorem~4.3]{Hormander2} explicit formula for the Weyl symbol 
\begin{equation*}
a_t(z) = ( \det (\cos (t F)))^{-\frac{1}{2}} \, \exp \left( \sigma(\tan(t F) z,z) \right), \quad z \in \rr {2d}, 
\end{equation*}
where $F = \J Q$ and $\tan(tF) = \sin(t F) (\cos(tF))^{-1}$, 
which is valid for all $t \geqs 0$ such that $\det (\cos (t F)) \neq 0$. 
According to \cite[Theorem~4.1]{Hormander2}, $\det (\cos (t F)) \neq 0$ 
unless $t \lambda \in \pi \left( \frac{1}{2} + \zo \right)$ where $\lambda \in \co$ is an eigenvalue of $F$. 
Clearly it is possible to pick $\ep > 0$ such that $\det (\cos (t F)) \neq 0$ for $0 \leqs t \leqs \ep$. 

The exponent of $a_t$ is 
\begin{equation*}
\sigma(\tan(t F) z,z) = \la \J \tan(t F) z, z \ra
= \frac{1}{2} \la T_t z, z \ra
\end{equation*}
where the symmetric matrix $T_t \in \cc {2d \times 2d}$ is 
\begin{equation*}
T_t =  \J \tan(tF) - (\tan(tF) )^T \J 
\end{equation*}
due to $\J^T = - \J$. 

Since $\cos (t F) \to I$ as $t \to 0^+$ we may assume that
the factor $( \det (\cos (t F)))^{-\frac{1}{2}}$ satisfies 
\begin{equation*}
( \det (\cos (t F)))^{-\frac{1}{2}} \leqs 2, \quad 0 \leqs t \leqs \ep, 
\end{equation*}
after possibly decreasing $\ep > 0$. 

According to \cite[Theorem~4.6]{Hormander2} we have $\re T_t \leqs 0$ for $t \in [0,\ep]$. 
Since $T_t \to 0$ as $t \to 0^+$, we may assume that $\re T_t$ and $\im T_t$ both have small 
eigenvalues, uniformly over $t \in [0,\ep]$, again after possibly decreasing $\ep > 0$. 
Specifically we assume that the eigenvalues belong to $[-\delta,\delta]$ 
for $t \in [0,\ep]$, where $\delta > 0$ is chosen small enough to guarantee by Proposition \ref{prop:gaussianestimates} that 
the estimates \eqref{eq:gaussderivative1} hold for all $h > 0$ with $\mu = s \, 2^{ -4-\frac{3}{2s} } d^{- \frac{1}{2s} }$. 
The claim is now a consequence of Proposition \ref{prop:continuitySigmas}. 
\end{proof}

By Theorem \ref{thm:contGS} we may extend $\cK_t$ uniquely from the domain $\Sigma_s(\rr d)$ to $\Sigma_s'(\rr d)$
by the assignment 
\begin{equation}\label{eq:dualsemigroupGS}
(\cK_t u, \fy) = (u, \cK_t^* \fy) = (u, \cK_{e^{- 2 i t \overline F}} \fy), \quad u \in \Sigma_s'(\rr d), \quad \fy \in \Sigma_s(\rr d). 
\end{equation}

\begin{cor}\label{cor:propagatorcontSigma}
If $s > \frac1{2}$ and $t \geqs 0$ then $\cK_t$ is a continuous linear operator on $\Sigma_s(\rr d)$, 
that extends uniquely to a continuous linear operator on $\Sigma_s'(\rr d)$ equipped with its weak$^*$ topology. 
\end{cor}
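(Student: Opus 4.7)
The plan is to read off both assertions from Theorem~\ref{thm:contGS} together with the duality formula \eqref{eq:dualsemigroupGS}; once the heavy analytical work in Theorem~\ref{thm:contGS} is in place, the corollary is essentially a formal consequence.

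First I would verify continuity on $\Sigma_s(\rr d)$. By Lemma~\ref{lem:seminorms} the Fréchet topology on $\Sigma_s(\rr d)$ is defined by the countable family of $L^2$-type seminorms $\{ \| \cdot \|_h \}_{h>0}$. Specializing Theorem~\ref{thm:contGS} to $T = t$ gives, for each $h>0$, values $h_1 = h_1(h) > 0$ and $C = C(t,h) > 0$ such that
\begin{equation*}
\| \cK_t f \|_h \leqs C \| f \|_{h_1}, \quad f \in \Sigma_s(\rr d),
\end{equation*}
which is the standard continuity criterion for a linear map between Fréchet spaces presented by such families of seminorms. In particular $\cK_t$ maps $\Sigma_s(\rr d)$ into itself continuously.

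Second, I would apply the same theorem to the $L^2$-adjoint. As recorded in the discussion preceding \eqref{eq:L2adjoint}, $\cK_t^* = \cK_{e^{-2it\overline F}}$, and this is precisely the propagator associated to the quadratic Hamiltonian $\overline Q$. Since $\overline Q$ is symmetric with $\re \overline Q = \re Q \geqs 0$, the hypotheses of Theorem~\ref{thm:contGS} are satisfied with $Q$ replaced by $\overline Q$. Hence $\cK_t^* \colon \Sigma_s(\rr d) \to \Sigma_s(\rr d)$ is continuous as well, and in particular $\cK_t^*\fy \in \Sigma_s(\rr d)$ for every $\fy \in \Sigma_s(\rr d)$.

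Third, the formula \eqref{eq:dualsemigroupGS} therefore well-defines a linear map $\cK_t \colon \Sigma_s'(\rr d) \to \Sigma_s'(\rr d)$. Weak$^*$ continuity reduces to checking that each defining seminorm $u \mapsto |(\cK_t u, \fy)| = |(u, \cK_t^* \fy)|$ is continuous on $\Sigma_{s,\rm w}'(\rr d)$, which is automatic because $\cK_t^* \fy$ lies in $\Sigma_s(\rr d)$ and therefore this is already one of the defining weak$^*$ seminorms. For uniqueness, any weak$^*$ continuous extension must coincide on the weak$^*$-dense subspace $\Sigma_s(\rr d) \subseteq \Sigma_s'(\rr d)$ (density being clear since $\Sigma_s$ separates its own points via the pairing), and the extension defined by \eqref{eq:dualsemigroupGS} agrees with $\cK_t$ on $\Sigma_s$ thanks to the adjoint identity \eqref{eq:L2adjoint} restricted to $\Sigma_s \subseteq L^2$. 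No single step presents a real obstacle; the only concrete thing requiring attention is the double invocation of Theorem~\ref{thm:contGS}, once for $Q$ (handling $\cK_t$) and once for $\overline Q$ (handling $\cK_t^*$), which is legitimate because both matrices have non-negative definite real part.
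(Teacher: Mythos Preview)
Your argument is correct and matches the paper's approach: the corollary is stated without explicit proof, immediately after Theorem~\ref{thm:contGS} and the duality assignment \eqref{eq:dualsemigroupGS}, precisely because it follows from those two ingredients in the way you describe. Your explicit remark that Theorem~\ref{thm:contGS} must be invoked a second time for $\overline Q$ to handle $\cK_t^*$ is a detail the paper leaves implicit but is exactly the intended reasoning.
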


Theorem \ref{thm:contGS} implies in particular that $\cK_t = \cK_{e^{- 2 i t F}}: \Sigma_s(\rr d) \to \Sigma_s(\rr d)$
is continuous for each fixed $t \geqs 0$. 

\begin{rem}\label{rem:generalization}
The continuity of $\cK_t: \Sigma_s(\rr d) \to \Sigma_s(\rr d)$ can be generalized as follows. 
The operator $\cK_T: \Sigma_s(\rr d) \to \Sigma_s(\rr d)$ is continuous for any matrix $T \in \Sp(d,\co)$ 
which is positive in the sense of 
\begin{equation}\label{eq:possympmatrix}
i \left( \sigma(\overline{TX}, TX) - \sigma(\overline{X},X) \right) \geqslant 0, \quad X \in T^* \cc d, 
\end{equation}
(cf. \cite{Hormander2}), 
where $\cK_T$ is the operator with kernel $K_T$ defined as in \eqref{schwartzkernel1} with $e^{- 2 i t F}$ replaced by $T$. 
Condition \eqref{eq:possympmatrix} means that the graph of $T$ is a positive Lagrangian in $T^* \cc d \times T^* \cc d$. 
The operator $\cK_t$ with kernel $K_{e^{- 2 i t F}}$ defined by the oscillatory integral kernel \eqref{schwartzkernel1} is a particular case with $T = e^{- 2 i t F}$. 
The matrix $e^{- 2 i t F}$ is a positive matrix in $\Sp(d,\co)$ according to \cite[Lemma~5.2]{PRW1}. 

This generalization of Theorem \ref{thm:contGS} has been stated in \cite[Proposition~8.1]{Carypis1}. 
The proof there is unfortunately wrong but it has been corrected \cite{Carypis2}. 
\end{rem}

The next result is a Gelfand--Shilov version of Lemma \ref{lem:boundedsetinvarianceS}. 

\begin{lem}\label{lem:boundedsetoperator}
If $B \subseteq \Sigma_s(\rr d)$ is bounded and $N > 0$ is an integer then 
\begin{equation*}
\{ x^\gamma D^\kappa f, \ f \in B, \  |\gamma+\kappa| < N \} \subseteq \Sigma_s(\rr d)
\end{equation*}
is also bounded. 
\end{lem}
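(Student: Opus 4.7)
The plan is to mirror the proof of Lemma~\ref{lem:boundedsetinvarianceS} but with the Schwartz seminorms \eqref{eq:seminormsS1} replaced by the Gelfand--Shilov seminorms \eqref{eq:seminormSigmas}. By assumption, for each $h_1 > 0$ there exists $C_{h_1} > 0$ with $\| f \|_{\mathcal S_{s,h_1}} \leqs C_{h_1}$ for all $f \in B$, and the goal is to show that for every $h > 0$ there is some $\widetilde C_h > 0$ with $\| x^\gamma D^\kappa f \|_{\mathcal S_{s,h}} \leqs \widetilde C_h$ uniformly for $f \in B$ and $|\gamma + \kappa| < N$.

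First I would fix $\alpha, \beta, \gamma, \kappa$ with $|\gamma+\kappa| < N$ and apply Leibniz exactly as in Lemma~\ref{lem:boundedsetinvarianceS} to obtain
\begin{equation*}
x^\alpha D^\beta(x^\gamma D^\kappa f) = \sum_{\sigma \leqs \min(\beta,\gamma)} \binom{\beta}{\sigma} \frac{\gamma!\, i^{-|\sigma|}}{(\gamma-\sigma)!} x^{\alpha+\gamma-\sigma} D^{\beta+\kappa-\sigma} f.
\end{equation*}
Next I would estimate each summand by the $\mathcal S_{s,h_1}$ seminorm of $f$, giving a bound of the form
\begin{equation*}
C_{h_1}\, h_1^{|\alpha+\beta+\gamma+\kappa|-2|\sigma|}\, \bigl((\alpha+\gamma-\sigma)!\,(\beta+\kappa-\sigma)!\bigr)^s
\end{equation*}
times combinatorial factors. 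The standard inequality $(\mu+\nu)! \leqs 2^{|\mu+\nu|}\mu!\,\nu!$ (cf.~\cite[Eq.~(0.3.6)]{Nicola1}), applied twice, then converts this into $(\alpha!\,\beta!)^s$ times $(\gamma!\,\kappa!)^s$ times harmless powers of $2^s$.

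The key bookkeeping step is the choice of $h_1$. Since $|\sigma| \leqs \min(|\beta|,|\gamma|) < N$, the factor $h_1^{-2|\sigma|}$ is controlled by $\max(1, h_1^{-2N})$, and the number of $\sigma$'s in the sum is bounded by a constant depending only on $d$ and $N$. Similarly $\gamma!, \kappa!$ and $h_1^{|\gamma+\kappa|}$ are bounded uniformly in $|\gamma+\kappa|<N$. After dividing by $h^{|\alpha+\beta|}(\alpha!\beta!)^s$, the only $(\alpha,\beta)$-dependent factor left is $(2^s h_1/h)^{|\alpha+\beta|}$, so choosing $h_1 = 2^{-s-1} h$ (or any $h_1$ with $2^s h_1 < h$) makes this factor uniformly bounded. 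Taking the supremum over $\alpha,\beta$ and over $f \in B$, $|\gamma+\kappa|<N$ then yields the required uniform bound on $\| x^\gamma D^\kappa f\|_{\mathcal S_{s,h}}$.

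The main obstacle is purely organizational: one has to pair each factorial correctly with its $h_1$-power so that the final bound is independent of $\alpha,\beta$ while all $(\gamma,\kappa)$-dependent pieces can be absorbed into a constant depending only on $N$ and $d$. Once the pairing is done, the argument is mechanical.
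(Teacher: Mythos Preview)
Your argument is sound and follows the same Leibniz-plus-seminorm scheme as the paper, but there is one small bookkeeping slip. The binomial coefficient $\binom{\beta}{\sigma}$ from the Leibniz expansion depends on $\beta$, so the claim that ``the only $(\alpha,\beta)$-dependent factor left is $(2^s h_1/h)^{|\alpha+\beta|}$'' is not quite correct as stated. The fix is immediate --- for instance $\binom{\beta}{\sigma}\leqs 2^{|\beta|}$, which merely changes the base from $2^s h_1$ to $2^{s+1}h_1$, so taking $h_1<2^{-s-1}h$ suffices --- but it should be made explicit.

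On the comparison with the paper: the paper works with the equivalent $L^2$-based seminorms $\|\cdot\|_h$ of \eqref{eq:seminormL2} rather than the sup-norms $\|\cdot\|_{\mathcal S_{s,h}}$, which is cosmetic. More interesting is the treatment of the factor $h^{-2|\sigma|}$. You observe directly that $|\sigma|\leqs|\gamma|<N$, so this factor is bounded by a constant depending only on $h_1$ and $N$. The paper instead reuses the device from Lemma~\ref{lem:seminorms}: it writes $\sigma!=\sigma!^{2s-\delta}$ with $\delta=2s-1>0$, pairs $\sigma!^{2s}$ with the shifted factorials to rebuild $((\alpha+\gamma)!(\kappa+\beta)!)^s$, and absorbs $\sigma!^{-\delta}h^{-2|\sigma|}$ via the estimate \eqref{eq:exponentialestimate}. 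Your route is more elementary here and does not invoke $s>\tfrac12$ at this particular step; the paper's trick keeps the argument uniform with Lemma~\ref{lem:seminorms} but is overkill once one notices that $|\sigma|$ is already bounded by $N$.
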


\begin{proof}
Using the seminorms \eqref{eq:seminormL2}
the assumption means that 
\begin{equation}\label{eq:boundedset}
\sup_{f \in B} \| f \|_h = C_h < \infty \quad \forall h > 0. 
\end{equation}
We have for $f \in B$ and $\alpha, \beta, \gamma, \kappa \in \nn d$ 
\begin{align*}
\left\| x^\alpha D ^\beta \left( x^\gamma D^\kappa f \right) \right\|_{L^2}
& = \left\| \sum_{\sigma \leqs \min( \beta,\gamma)} \binom{\beta}{\sigma} \frac{\gamma! \, i^{-|\sigma|}}{(\gamma-\sigma)!} x^{\alpha + \gamma - \sigma} D^{\kappa+\beta-\sigma} f \right\|_{L^2} \\
& \leqs \sum_{\sigma \leqs \min( \beta,\gamma)} \binom{\beta}{\sigma} \sigma! \, 2^{|\gamma|}  \left\| x^{\alpha + \gamma - \sigma} D^{\kappa+\beta-\sigma} f \right\|_{L^2}. 
\end{align*}

As in the proof of Lemma \ref{lem:seminorms} we next use $1 = 2 s - \delta$ where $\delta > 0$. 
Let $h > 0$. 
Since $\| \cdot \|_{h_1} \leqs \| \cdot \|_{h_2}$ when $h_1 \geqs h_2 > 0$ we may assume that $h \leqs 1$. 
Provided $|\gamma+\kappa| < N$
we obtain using \eqref{eq:exponentialestimate} and \eqref{eq:boundedset}
\begin{align*}
\left\| x^\alpha D ^\beta \left( x^\gamma D^\kappa f \right) \right\|_{L^2}
& \leqs 2^{N} \sum_{\sigma \leqs \min( \beta,\gamma)} \binom{\beta}{\sigma} \sigma!^{2s-\delta}  \left\| x^{\alpha + \gamma - \sigma} D^{\kappa+\beta-\sigma} f \right\|_{L^2} \\
& \leqs 2^{N} C_h \sum_{\sigma \leqs \min( \beta,\gamma)} \binom{\beta}{\sigma} \sigma!^{2s-\delta}  h^{|\alpha + \beta + \gamma + \kappa - 2 \sigma |} ((\alpha + \gamma - \sigma)! (\kappa+\beta-\sigma)!)^s \\
& \leqs 2^{N} C_h \, h^{|\alpha + \beta|} \sum_{\sigma \leqs \min( \beta,\gamma)} \binom{\beta}{\sigma} \sigma!^{-\delta}  h^{- 2 |\sigma|}  ((\alpha + \gamma)! (\kappa+\beta)!)^s \\
& \leqs 2^{N} C_{\delta,d,h} \, C_h \, h^{|\alpha + \beta|} (\alpha! \beta!)^s \sum_{\sigma \leqs \min( \beta,\gamma)} \binom{\beta}{\sigma} 2^{s |\alpha+\beta+\gamma+\kappa|} (\gamma! \kappa!)^s \\
& \leqs C_N \, C_{\delta,d,h} \, C_h \, (2^{s+1} h)^{|\alpha + \beta|} (\alpha! \beta!)^s.  
\end{align*}
This gives for some $C_{\delta,d,h,N}' > 0$
\begin{align*}
\left\| x^\gamma D^\kappa f  \right\|_{2^{s+1} h}
\leqs C_{\delta,d,h,N}', \quad |\gamma+\kappa| < N, \quad f \in B. 
\end{align*}
Since $0 < h \leqs 1$ is arbitrary we have proved the claim.
\end{proof}

The proof of the next result is omitted since it is conceptually identical to the proof of Lemma \ref{lem:boundedsetcoverS}. 

\begin{lem}\label{lem:boundedsetcover}
If $B \subseteq \Sigma_s(\rr d)$ is bounded and $\ep > 0$
then there exists $K \in \no$ and $\fy_j \in \Sigma_s(\rr d)$ for $1 \leqs j \leqs K$ such that 
\begin{equation*}
B \subseteq \bigcup_{j=1}^K B_\ep (\fy_j)
 \end{equation*}
where the open balls $B_\ep (\fy_j) \subseteq L^2(\rr d)$ refer to the $L^2$ norm. 
\end{lem}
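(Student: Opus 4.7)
The plan is to mimic the proof of Lemma \ref{lem:boundedsetcoverS}, replacing the Schwartz Hermite-sequence identification \eqref{eq:Ssequence} by the Gelfand--Shilov identification \eqref{eq:GSsequence}. Thus I transfer the problem to the sequence side via the homeomorphism $\Sigma_s(\rr d) \ni f \mapsto \{(f,h_\alpha)\}_{\alpha\in\nn d}$ and work with the Hilbert sequence spaces $\ell_{s,r}^2$, while $L^2(\rr d)$ corresponds to $\ell^2(\nn d)$.

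First I would use boundedness of $B \subseteq \Sigma_s(\rr d)$: by \eqref{eq:GSsequence}, for every $r>0$ there exists $C_r > 0$ such that
\begin{equation*}
\sum_{\alpha \in \nn d} |c_\alpha|^2 e^{2 r |\alpha|^{\frac{1}{2s}}} \leqs C_r^2, \quad c \in B.
\end{equation*}
Specializing to $r=1$ (or any fixed $r>0$) and pulling the exponential factor outside the tail yields, for all $N \in \no$,
\begin{equation*}
\sup_{c \in B} \sum_{|\alpha| > N} |c_\alpha|^2 \leqs e^{-2 N^{\frac{1}{2s}}} C_1^2,
\end{equation*}
which can be made strictly smaller than $\ep^2/2$ by choosing $N$ sufficiently large.

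Next I would handle the finite-dimensional part exactly as in Lemma \ref{lem:boundedsetcoverS}. Setting
\begin{equation*}
B_{(N)} = \{ \{c_\alpha\}_{|\alpha|\leqs N} : \{c_\alpha\}_{\alpha \in \nn d} \in B \} \subseteq \cc M
\end{equation*}
for the appropriate $M$, the bound $\sum_{|\alpha|\leqs N} |c_\alpha|^2 \leqs C_1^2$ shows $B_{(N)}$ is contained in a closed ball of $\cc M$, hence relatively compact. A finite cover by open $\ell^2$-balls of radius $\ep/\sqrt{2}$ centred at points $c_j \in \cc M$, $1 \leqs j \leqs K$, then exists. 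Zero-padding the $c_j$ outside $\{|\alpha| \leqs N\}$ produces sequences with finite support, which on the function side correspond to finite Hermite sums $\fy_j \in \Sigma_s(\rr d)$ (a finite linear combination of Hermite functions lies in every $\mathcal S_{s,h}$, hence in $\Sigma_s$, since $s > 1/2$).

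Finally, combining the tail estimate with the finite-dimensional cover gives
\begin{equation*}
\min_{1 \leqs j \leqs K} \| c - c_j \|_{\ell^2(\nn d)}^2 = \min_{1 \leqs j \leqs K} \sum_{|\alpha|\leqs N} |c_\alpha - c_{j,\alpha}|^2 + \sum_{|\alpha| > N} |c_\alpha|^2 < \ep^2, \quad c \in B,
\end{equation*}
which translates back to $B \subseteq \bigcup_{j=1}^K B_\ep(\fy_j)$ in $L^2(\rr d)$. There is no real obstacle: the only point requiring attention is verifying that the exponential weight in $\ell_{s,r}^2$ still yields a usable tail estimate, but since $e^{2r|\alpha|^{1/(2s)}} \to \infty$ it controls any polynomial tail a fortiori, and the compactness argument in finite dimension is identical. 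The rest is formally parallel to the proof of Lemma \ref{lem:boundedsetcoverS}.
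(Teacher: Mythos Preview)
Your proposal is correct and follows exactly the route the paper indicates: the paper omits the proof entirely, stating only that it is ``conceptually identical to the proof of Lemma~\ref{lem:boundedsetcoverS}'', which is precisely the substitution of the Hermite sequence identification \eqref{eq:GSsequence} for \eqref{eq:Ssequence} that you carry out. The tail estimate via the exponential weight $e^{2r|\alpha|^{1/(2s)}}$, the finite-dimensional compactness step, and the observation that the zero-padded centres are finite Hermite sums and hence lie in $\Sigma_s(\rr d)$ for $s>\tfrac12$ are all sound.
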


We have now reached a point where we may prove that $\cK_t$ is a strongly continuous semigroup on $\Sigma_s(\rr d)$. 
It is a consequence of the following result. 

\begin{thm}\label{thm:strongcontGS}
The map $[0,\infty) \ni t \mapsto \cK_t$ is a semigroup on $\Sigma_s(\rr d)$, which satisfies 
for each bounded set $B \subseteq \Sigma_s(\rr d)$ and all $h > 0$
\begin{equation}\label{eq:verystrongcont}
\lim_{t \to 0^+} \sup_{\fy \in B} \| (\cK_t - I) \fy \|_h = 0. 
\end{equation}
\end{thm}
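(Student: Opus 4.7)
The semigroup property on $\Sigma_s(\rr d)$ is immediate: since $\Sigma_s(\rr d) \subseteq L^2(\rr d)$ for $s > 1/2$ and Corollary \ref{cor:propagatorcontSigma} ensures $\cK_t: \Sigma_s \to \Sigma_s$, the relations $\cK_0 = I$ and $\cK_{t_1+t_2} = \cK_{t_1} \cK_{t_2}$ on $\Sigma_s$ are inherited from the corresponding identities on $L^2$. The real content is \eqref{eq:verystrongcont}, and the plan is to split the supremum defining the seminorm $\|\cdot\|_h$ of Lemma \ref{lem:seminorms} into a high-order tail, controlled by Theorem \ref{thm:contGS}, and a low-order finite piece, controlled by the Schwartz-level Lemma \ref{lem:strongcont}.

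Fix $h > 0$, a bounded set $B \subseteq \Sigma_s(\rr d)$, and $\ep > 0$. For the high-order tail I apply Theorem \ref{thm:contGS} with parameter $h/2$ in place of $h$, producing constants $h_1, C > 0$ such that $\|\cK_t \fy\|_{h/2} \leqs C \|\fy\|_{h_1}$ uniformly for $0 \leqs t \leqs 1$. Since $B$ is $\Sigma_s$-bounded, $\sup_{\fy \in B} \|\fy\|_{h_1} < \infty$, hence
\begin{equation*}
M := \sup_{\fy \in B, \, 0 \leqs t \leqs 1} \| (\cK_t - I) \fy \|_{h/2} < \infty.
\end{equation*}
Then for any multi-indices with $|\alpha+\beta| > N$,
\begin{equation*}
\frac{\| x^\alpha D^\beta (\cK_t - I)\fy \|_{L^2}}{h^{|\alpha+\beta|}(\alpha!\beta!)^s} = 2^{-|\alpha+\beta|} \frac{\| x^\alpha D^\beta (\cK_t - I)\fy \|_{L^2}}{(h/2)^{|\alpha+\beta|}(\alpha!\beta!)^s} \leqs 2^{-N} M,
\end{equation*}
so picking $N$ with $2^{-N} M < \ep/2$ handles the tail uniformly in $t \in [0,1]$ and $\fy \in B$.

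For the low-order part, the inclusion $\Sigma_s(\rr d) \hookrightarrow \cS(\rr d)$ is continuous, so $B$ is also a bounded subset of $\cS(\rr d)$. Thus Lemma \ref{lem:strongcont} applies directly and yields, for each multi-index pair $(\alpha,\beta)$ with $|\alpha+\beta| \leqs N$,
\begin{equation*}
\lim_{t \to 0+} \sup_{\fy \in B} \| x^\alpha D^\beta (\cK_t - I)\fy \|_{L^2} = 0.
\end{equation*}
There are only finitely many such pairs, so one can choose $\delta > 0$ so small that for all $0 \leqs t \leqs \delta$ and every $(\alpha,\beta)$ with $|\alpha+\beta| \leqs N$,
\begin{equation*}
\sup_{\fy \in B} \| x^\alpha D^\beta (\cK_t - I)\fy \|_{L^2} \leqs \tfrac{\ep}{2} \, h^{|\alpha+\beta|}(\alpha!\beta!)^s.
\end{equation*}
Combining the two ranges, $\sup_{\fy \in B} \|(\cK_t - I)\fy\|_h \leqs \ep$ for $0 \leqs t \leqs \delta$, which proves \eqref{eq:verystrongcont}.

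The main obstacle is the high-order tail, where one cannot appeal to the commutator identity of Lemma \ref{lem:operatorcommutator} alone (the constants there blow up with $|\alpha+\beta|$). The key trick is to trade the geometric factor $(h/2)^{|\alpha+\beta|}/h^{|\alpha+\beta|} = 2^{-|\alpha+\beta|}$, arising from the monotonicity of the seminorms in $h$, against the uniform $\Sigma_s$-boundedness supplied by Theorem \ref{thm:contGS}. Once this geometric absorption of the factorial-weighted tail is in place, the remaining low-order contribution is a finite assembly of $L^2$-strong-continuity statements already established in Section \ref{sec:propagator}, so no further Gelfand--Shilov-specific machinery beyond Lemmas \ref{lem:boundedsetoperator}--\ref{lem:boundedsetcover} (used implicitly via Lemma \ref{lem:strongcont}) is required.
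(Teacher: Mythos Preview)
Your proof is correct, and for the high-order tail $|\alpha+\beta|>N$ it is essentially identical to the paper's argument (the $h\mapsto h/2$ geometric absorption combined with Theorem~\ref{thm:contGS}). The difference lies in the low-order piece: the paper reproves the content of Lemma~\ref{lem:strongcont} in the $\Sigma_s$ setting, invoking the Gelfand--Shilov versions Lemma~\ref{lem:boundedsetoperator} and Lemma~\ref{lem:boundedsetcover} to control $\|(\cK_t-I)x^\alpha D^\beta\fy\|_{L^2}$ uniformly over $\fy\in B$. You instead observe that a bounded subset of $\Sigma_s(\rr d)$ is automatically bounded in $\cS(\rr d)$ (by continuity of the inclusion), so Lemma~\ref{lem:strongcont} applies verbatim to each fixed $(\alpha,\beta)$ with $|\alpha+\beta|\leqs N$, and a finite minimum of the resulting $\delta$'s finishes the job. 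This is a genuine shortcut: with your argument, Lemmas~\ref{lem:boundedsetoperator} and \ref{lem:boundedsetcover} are never used (they are not cited anywhere else in the paper), so your route is strictly more economical.

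One small correction to your closing commentary: Lemma~\ref{lem:strongcont} relies on the \emph{Schwartz} auxiliary Lemmas~\ref{lem:boundedsetinvarianceS} and \ref{lem:boundedsetcoverS}, not on the Gelfand--Shilov Lemmas~\ref{lem:boundedsetoperator} and \ref{lem:boundedsetcover}. Your parenthetical ``used implicitly via Lemma~\ref{lem:strongcont}'' is therefore misattributed; in fact your argument bypasses the Gelfand--Shilov lemmas entirely, which is precisely its virtue.
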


\begin{proof}
The semigroup property $\cK_{t_1 + t_2} = \cK_{t_1} \cK_{t_2}$ for $t_1, t_2 \geqs 0$, as well as $\cK_0 = I$, 
are immediate since they hold on $L^2$ and $\Sigma_s(\rr d) \subseteq L^2$, 
and Corollary \ref{cor:propagatorcontSigma} shows that $\cK_t : \Sigma_s (\rr d) \to \Sigma_s(\rr d)$ 
is continuous for each $t \geqs 0$. 

It remains to show \eqref{eq:verystrongcont} where $h > 0$ and $B \subseteq \Sigma_s(\rr d)$ is bounded
as in \eqref{eq:boundedset}. 
We may assume that $h \leqs 1$. 

Let $\ep > 0$ and $N \in \no$. 
If $|\alpha + \beta| \geqs N$ and $0 < t \leqs 1$ then we obtain from Theorem \ref{thm:contGS}
\begin{equation}\label{eq:largeindexestimate}
\begin{aligned}
\frac{\| x^\alpha D^\beta (\cK_t -I) \fy \|_{L^2}}{h^{|\alpha + \beta|} (\alpha! \beta!)^s}
& \leqs 2^{-|\alpha + \beta|} \frac{\| x^\alpha D^\beta \cK_t \fy \|_{L^2} + \| x^\alpha D^\beta \fy \|_{L^2}}{\left( \frac{h}{2} \right)^{|\alpha + \beta|} (\alpha! \beta!)^s} \\
& \leqs 2^{-N} \left( \| \cK_t \fy \|_{\frac{h}{2}} + \| \fy \|_{\frac{h}{2}} \right) \\
& \lesssim 2^{-N} \left( \| \fy \|_{h_1} + \| \fy \|_{\frac{h}{2}} \right) 
\leqs \ep, \quad \fy \in B, 
\end{aligned}
\end{equation}
for some $h_1 > 0$, 
provided $N \in \no$ is sufficiently large, taking into account \eqref{eq:boundedset}.  

We also have to consider $\alpha, \beta \in \nn d$ such that $|\alpha + \beta| < N$.  
From Lemma \ref{lem:operatorcommutator} and the contraction property of $\cK_t$ acting on $L^2$ we obtain for $0 < t \leqs 1$
\begin{equation}\label{eq:smalltimeestimate}
\begin{aligned}
\| x^\alpha D^\beta (\cK_t -I) \fy \|_{L^2}
& \leqs |C_{\alpha,\beta} (t)| \, \| (\cK_t -I) x^\alpha D^\beta \fy \|_{L^2} + |C_{\alpha,\beta} (t) - 1| \, \| x^\alpha D^\beta \fy \|_{L^2} \\
& \qquad + \sum_{\stackrel{|\gamma + \kappa| \leqs |\alpha+\beta|}{(\gamma,\kappa) \neq (\alpha,\beta)}} |C_{\gamma,\kappa} (t)| \, \| x^\gamma D^\kappa \fy \|_{L^2} \\
& \leqs C \| (\cK_t -I) x^\alpha D^\beta \fy \|_{L^2} + |C_{\alpha,\beta} (t) - 1| \, \| x^\alpha D^\beta \fy \|_{L^2} \\
& \qquad + \sum_{\stackrel{|\gamma + \kappa| \leqs |\alpha+\beta|}{(\gamma,\kappa) \neq (\alpha,\beta)}} |C_{\gamma,\kappa} (t)| \, \| x^\gamma D^\kappa \fy \|_{L^2}
\end{aligned}
\end{equation}
where $C>0$
and \eqref{eq:tlimits} hold.  

By Lemma \ref{lem:boundedsetoperator}, $\{ x^\alpha D^\beta \fy: \ \fy \in B, \  |\alpha + \beta| < N \} \subseteq \Sigma_s(\rr d)$ is bounded.  
Thus by Lemma \ref{lem:boundedsetcover} there exists $K \in \no$ and $\fy_j \in \Sigma_s(\rr d)$, $1 \leqs j \leqs K$, such that 
\begin{equation*}
\min_{1 \leqs j \leqs K} \| x^\alpha D^\beta \fy - \fy_j \|_{L^2} < \frac{\ep h^N}{8 C}, \quad  |\alpha + \beta| < N, \quad \fy \in B. 
\end{equation*}

The strong continuity and the contraction property of $\cK_t$ acting on $L^2$
gives for $0 < t \leqs \delta$
\begin{equation}\label{eq:firstterm}
\begin{aligned}
\| (\cK_t -I) x^\alpha D^\beta \fy \|_{L^2} 
& = \min_{1 \leqs j \leqs K} \| (\cK_t -I) (x^\alpha D^\beta \fy - \fy_j + \fy_j) \|_{L^2} \\
& \leqs \min_{1 \leqs j \leqs K} \left( 2 \| x^\alpha D^\beta \fy - \fy_j\|_{L^2} + \| (\cK_t -I) \fy_j \|_{L^2} \right) \\
& \leqs \frac{\ep h^N}{4 C} + \frac{\ep h^N}{4 C} =  \frac{\ep h^N}{2 C}, \quad |\alpha + \beta| < N, \quad \fy \in B, 
\end{aligned}
\end{equation}
provided $\delta > 0$ is sufficiently small. 

We have 
\begin{equation}\label{eq:secondterm}
\| x^\alpha D^\beta \fy \|_{L^2}
\leqs C_h (\alpha! \beta!)^s h^{|\alpha + \beta|}, \quad \alpha, \beta \in \nn d, \quad \fy \in B, 
\end{equation}
and for $|\gamma + \kappa| \leqs |\alpha+\beta| < N$ we have 
\begin{equation*}
\| x^\gamma D^\kappa \fy \|_{L^2} \leqs C_h h^{|\gamma+\kappa|} (\gamma! \kappa!)^s
\leqs C_h \max_{|\gamma + \kappa| < N}(\gamma! \kappa!)^s := C_{h,N}, \quad \fy \in B. 
\end{equation*}
Due to \eqref{eq:tlimits} the latter gives for $0 < t \leqs \delta$
\begin{equation}\label{eq:thirdterm}
\sum_{\stackrel{|\gamma + \kappa| \leqs |\alpha+\beta|}{(\gamma,\kappa) \neq (\alpha,\beta)}} |C_{\gamma,\kappa} (t)| \, \| x^\gamma D^\kappa \fy \|_{L^2} 
< \frac{\ep h^N}{4}, \quad |\alpha+\beta| < N, \quad \fy \in B, 
\end{equation}
after possibly decreasing $\delta > 0$. 

Finally we insert \eqref{eq:firstterm}, \eqref{eq:secondterm} and \eqref{eq:thirdterm} into \eqref{eq:smalltimeestimate}. 
Using \eqref{eq:tlimits} we obtain then for $0 < t \leqs \delta$, again after possibly decreasing $\delta > 0$, 
\begin{align*}
& \frac{\| x^\alpha D^\beta (\cK_t -I) \fy \|_{L^2}}{h^{|\alpha+\beta|} (\alpha! \beta !)^s} \\
& \leqs \frac{C \| (\cK_t -I) x^\alpha D^\beta \fy \|_{L^2}}{h^N} + |C_{\alpha,\beta} (t) - 1| \, C_h 
+ \sum_{\stackrel{|\gamma + \kappa| \leqs |\alpha+\beta|}{(\gamma,\kappa) \neq (\alpha,\beta)}} |C_{\gamma,\kappa} (t)| \, 
\frac{\| x^\gamma D^\kappa \fy \|_{L^2} }{h^N} \\
& \leqs \frac{\ep}{2} + \frac{\ep}{4} + \frac{\ep}{4} = \ep, \quad |\alpha + \beta| < N, \quad \fy \in B. 
\end{align*}
If we combine this estimate with \eqref{eq:largeindexestimate} we obtain for $0 < t \leqs \delta$
\begin{equation*}
\frac{\| x^\alpha D^\beta (\cK_t -I) \fy \|_{L^2}}{h^{|\alpha+\beta|} (\alpha! \beta !)^s} 
\leqs \ep, \quad \alpha, \beta \in \nn d, \quad \fy \in B. 
\end{equation*}
Since $\ep > 0$ is arbitrary this proves \eqref{eq:verystrongcont}. 
\end{proof}

As a consequence, picking the bounded set $B$ as a single element in $\Sigma_s$, we obtain the following result. 
The local equicontinuity is a consequence of Theorem \ref{thm:contGS}. 

\begin{cor}\label{cor:locequicont}
For $t \geqs 0$, $\cK_t$ is a locally equicontinuous strongly continuous semigroup on $\Sigma_s(\rr d)$. 
\end{cor}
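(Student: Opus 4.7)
The plan is to derive Corollary \ref{cor:locequicont} as a straightforward assembly of the two deep results already established: Theorem \ref{thm:contGS} supplies local equicontinuity, and Theorem \ref{thm:strongcontGS} supplies strong continuity at the origin; the only thing left is to upgrade strong continuity at $0$ to strong continuity at arbitrary $t_0 \geqs 0$.

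First, local equicontinuity is essentially a restatement of Theorem \ref{thm:contGS}. By Lemma \ref{lem:seminorms}, the family $\{\|\cdot\|_h\}_{h>0}$ is equivalent to the defining seminorms of $\Sigma_s(\rr d)$, so fixing $T>0$ and a seminorm $\|\cdot\|_h$, Theorem \ref{thm:contGS} furnishes $h_1 = h_1(h) > 0$ and $C = C_{T,h} > 0$ with
\begin{equation*}
\| \cK_t \fy \|_h \leqs C \| \fy \|_{h_1}, \quad 0 \leqs t \leqs T, \quad \fy \in \Sigma_s(\rr d),
\end{equation*}
which is exactly the definition of local equicontinuity. Continuity of $\cK_t$ for each fixed $t \geqs 0$, together with the semigroup relations $\cK_0 = I$ and $\cK_{t_1+t_2} = \cK_{t_1}\cK_{t_2}$, is already given by Corollary \ref{cor:propagatorcontSigma} and Theorem \ref{thm:strongcontGS}.

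For strong continuity, the plan is first to handle $t \to 0+$ and then propagate this to arbitrary $t_0 > 0$ via the semigroup law. Strong continuity at $0$ is immediate from Theorem \ref{thm:strongcontGS} applied to the singleton bounded set $B = \{\fy\}$, which yields $\lim_{t \to 0+} \|(\cK_t - I)\fy\|_h = 0$ for every $h > 0$ and hence $\cK_t \fy \to \fy$ in $\Sigma_s(\rr d)$. For $t_0 > 0$ and $\fy \in \Sigma_s(\rr d)$, write the right limit as
\begin{equation*}
\cK_t \fy - \cK_{t_0} \fy = \cK_{t_0} ( \cK_{t - t_0} - I ) \fy, \quad t > t_0,
\end{equation*}
so that strong continuity at $0$ combined with the continuity of $\cK_{t_0}$ on $\Sigma_s$ yields $\cK_t \fy \to \cK_{t_0} \fy$ as $t \to t_0+$. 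For the left limit, write
\begin{equation*}
\cK_t \fy - \cK_{t_0} \fy = - \cK_t ( \cK_{t_0 - t} - I ) \fy, \quad 0 \leqs t < t_0,
\end{equation*}
and apply Theorem \ref{thm:contGS} with $T = t_0$: for the target seminorm $\|\cdot\|_h$ there exist $h_1 > 0$ and $C > 0$ such that
\begin{equation*}
\| \cK_t ( \cK_{t_0 - t} - I ) \fy \|_h \leqs C \| ( \cK_{t_0 - t} - I ) \fy \|_{h_1}, \quad 0 \leqs t \leqs t_0,
\end{equation*}
and the right-hand side tends to $0$ as $t \to t_0-$ by strong continuity at $0$.

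The only subtlety is the left-continuity at $t_0 > 0$: without a uniform bound on $\cK_t$ one could not control the composition $\cK_t ( \cK_{t_0 - t} - I ) \fy$ as $t$ varies. This is precisely where local equicontinuity from Theorem \ref{thm:contGS} is essential, and it is the only non-formal ingredient of the argument.
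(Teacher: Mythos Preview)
Your proof is correct and follows the same approach as the paper: local equicontinuity from Theorem \ref{thm:contGS} and strong continuity at the origin from Theorem \ref{thm:strongcontGS} applied to the singleton $B = \{\fy\}$. Note that by the paper's definition of a strongly continuous semigroup (Section \ref{sec:prel}), only the limit $\lim_{t \to 0+} \cK_t \fy = \fy$ is required, so your careful treatment of continuity at arbitrary $t_0 > 0$ is additional but unnecessary detail.
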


We denote the generator of the semigroup $\cK_t$ acting on $\Sigma_s(\rr d)$ by $L_s$, to distinguish from the generator $A_s$ defined in  \eqref{eq:generatorMs}. 
Because of $\Sigma_s \subseteq \cS$ we have $L_s \subseteq A$, cf. \eqref{eq:generatorS}. 

By \cite[Proposition~1.3]{Komura1} the domain $D(L_s) \subseteq \Sigma_s(\rr d)$ is dense, 
and by \cite[Proposition~1.4]{Komura1} $L_s$ is a closed operator in $\Sigma_s(\rr d)$.

\begin{prop}\label{prop:generatorbounded}
The generator $L_s$ is a continuous operator on $\Sigma_s(\rr d)$ and thus $D(L_s) = \Sigma_s(\rr d)$.
\end{prop}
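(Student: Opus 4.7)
The strategy is to identify $L_s$ with the operator $-q^w(x,D)$ (restricted to $\Sigma_s(\rr d)$) and then invoke the fact that $q^w(x,D)$ is manifestly continuous on $\Sigma_s(\rr d)$. Since $q$ is a homogeneous quadratic form, $q^w(x,D)$ is a finite linear combination of monomial multiplication--differential operators $x^\alpha D^\beta$ with $|\alpha+\beta|\leqs 2$. Continuity of each such operator on $\Sigma_s(\rr d)$, expressed using the seminorms $\| \cdot \|_h$ of \eqref{eq:seminormL2}, follows by a straightforward Leibniz-type estimate essentially identical to the one carried out in the proof of Lemma \ref{lem:boundedsetoperator} applied to a single element. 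In particular $q^w(x,D)$ maps $\Sigma_s(\rr d)$ continuously into itself.

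Next I would prove that $D(L_s) = \Sigma_s(\rr d)$ and $L_s f = -q^w(x,D) f$ for all $f \in \Sigma_s(\rr d)$. Fix $f \in \Sigma_s(\rr d)$ and set $g = q^w(x,D) f \in \Sigma_s(\rr d)$. By Corollary \ref{cor:locequicont} the map $\tau \mapsto \cK_\tau g$ is continuous from $[0,\infty)$ into $\Sigma_s(\rr d)$, so the Riemann integral
\begin{equation*}
I(t) := \int_0^t \cK_\tau g \, d\tau
\end{equation*}
is a well-defined element of $\Sigma_s(\rr d)$ (the space is sequentially complete as a Fr\'echet space). On the other hand, because $\Sigma_s(\rr d) \subseteq L^2(\rr d) \subseteq D(q^w(x,D))$, semigroup theory on $L^2$ yields the identity $\cK_t f - f = -\int_0^t \cK_\tau g \, d\tau$ in $L^2(\rr d)$. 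Both $\cK_t f - f$ and $I(t)$ lie in $\Sigma_s(\rr d)$, and the continuous embedding $\Sigma_s(\rr d) \hookrightarrow L^2(\rr d)$ forces the identity $\cK_t f - f = - I(t)$ to hold in $\Sigma_s(\rr d)$ as well.

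Dividing by $t$, it remains to show that $t^{-1} I(t) \to g$ in $\Sigma_s(\rr d)$ as $t \to 0+$. For every seminorm $\| \cdot \|_h$,
\begin{equation*}
\bigl\| t^{-1} I(t) - g \bigr\|_h \leqs t^{-1} \int_0^t \| \cK_\tau g - g \|_h \, d\tau,
\end{equation*}
and the right hand side tends to zero by the strong continuity of $\cK_t$ on $\Sigma_s(\rr d)$ (Corollary \ref{cor:locequicont}). Hence $f \in D(L_s)$ with $L_s f = -g = -q^w(x,D) f$. Since $f$ was arbitrary, $D(L_s) = \Sigma_s(\rr d)$ and $L_s = -q^w(x,D)$, which is continuous by the first step.

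The only genuinely delicate point is the transfer of the semigroup identity $\cK_t f - f = -I(t)$ from $L^2$ to $\Sigma_s(\rr d)$, i.e.\ the fact that the Riemann integral computed in $\Sigma_s(\rr d)$ agrees with the corresponding Bochner/Riemann integral in $L^2$. This is standard: continuity of the inclusion $\Sigma_s(\rr d) \hookrightarrow L^2(\rr d)$ implies that Riemann sums of the $\Sigma_s$-valued integrand converge in both topologies to the respective integrals, and by uniqueness of limits the two integrals coincide as elements of $L^2$. Once this identification is in place, the rest of the argument is routine. Alternatively, one may bypass the explicit computation by observing that $L_s$ is closed (by \cite[Proposition~1.4]{Komura1}) and defined on all of the Fr\'echet space $\Sigma_s(\rr d)$, so the closed graph theorem immediately yields continuity.
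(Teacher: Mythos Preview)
Your argument is correct and takes a genuinely different route from the paper's proof. The paper establishes continuity of $q^w(x,D)$ on $\Sigma_s$ by verifying that $q$ belongs to a Cappiello--Toft symbol class $\Gamma_{0,s}^\infty$ and then invoking \cite[Theorem~4.10]{Cappiello2}; your direct observation that $q^w(x,D)$ is a finite linear combination of operators $x^\alpha D^\beta$ with $|\alpha+\beta|\leqs 2$, together with the Leibniz estimate from Lemma~\ref{lem:boundedsetoperator}, is more elementary and self-contained. For the identification $D(L_s)=\Sigma_s$, the paper argues indirectly: it first shows $L_s \subseteq -q^w(x,D)$, then uses density of $D(L_s)$ in $\Sigma_s$ (\cite[Proposition~1.3]{Komura1}) and closedness of $L_s$ (\cite[Proposition~1.4]{Komura1}) to extend $L_s$ to all of $\Sigma_s$ via approximation. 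Your approach is direct: for each $f\in\Sigma_s$ you verify the generator limit by transferring the $L^2$ integral identity $\cK_t f - f = -\int_0^t \cK_\tau q^w(x,D) f\,d\tau$ to $\Sigma_s$ and using strong continuity. This avoids appealing to the abstract density result for $D(L_s)$, at the mild cost of checking that the Riemann integrals in $\Sigma_s$ and in $L^2$ agree, which you handle correctly. Both strategies rely on the continuity of $q^w(x,D)$ on $\Sigma_s$ as the essential analytic input; your closed graph alternative at the end is a nice observation but is not needed once $L_s=-q^w(x,D)$ has been identified.
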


\begin{proof}
First we consider the Weyl symbol $q \in \Gamma^2$ defined in \eqref{eq:quadraticform}. It satisfies
\begin{align*}
|\pd \alpha q(z) | 
& \lesssim \eabs{z}^{2-|\alpha|}, \quad z \in \rr {2d}, \quad \alpha \in \nn {2d}, & |\alpha| \leqs 2, \\
& \equiv 0, & |\alpha| > 2. 
\end{align*}
Combining with (cf. \eqref{eq:exponentialestimate})
\begin{equation*}
\alpha!^{-s} h^{- |\alpha|} 
= \left( \frac{h^{- \frac{|\alpha|}{s}}}{\alpha!} \right)^s
\leqs \left( \frac{ \left( 2d h^{- \frac{1}{s}} \right)^{|\alpha|} }{|\alpha|!} \right)^s
\leqs \exp \left( 2 s d h^{-\frac{1}{s}} \right), \quad \alpha \in \nn {2d}, \quad h > 0, 
\end{equation*}
this gives
\begin{align*}
\alpha!^{-s} h^{- |\alpha|}  e^{- |z|^{\frac{1}{s}}}
|\pd \alpha q(z) | 
\lesssim \exp \left( 2 s d h^{-\frac{1}{s}} \right) \eabs{z}^{2} e^{- |z|^{\frac{1}{s}}}
\leqs C, \quad z \in \rr {2d}, \quad \alpha \in \nn {2d}, \quad h > 0, 
\end{align*}
where $C = C_{s,d,h} > 0$. 

We have proved the estimates 
\begin{equation*}
|\pd \alpha q(z) | 
\lesssim h^{|\alpha|} \alpha!^s e^{|z|^{\frac{1}{s}}}, \quad z \in \rr {2d}, \quad \alpha \in \nn {2d}, \quad \forall h > 0, 
\end{equation*}
which by \cite[Definition~2.4]{Cappiello2} implies that $q$ belongs to a space there denoted $\Gamma_{0,s}^\infty(\rr {2d})$. 
According to \cite[Theorem~4.10]{Cappiello2} the operator $q^w(x,D): \Sigma_s(\rr d) \to \Sigma_s(\rr d)$ is thereby continuous. 
Hence, referring to the seminorms \eqref{eq:seminormL2},  for any $h_1 > 0$ there exists $h_2 > 0$ such that 
\begin{equation}\label{eq:qwcontGS}
\| q^w(x,D) \fy \|_{h_1} \lesssim \| \fy \|_{h_2}, \quad \fy \in \Sigma_s(\rr d). 
\end{equation}

We have $D(L_s) \subseteq \Sigma_s(\rr d) \subseteq D( q^w(x,D) )$. 
If $f \in D(L_s)$ then the limit
\begin{equation*}
L_s f = \lim_{h \to 0^+} h^{-1} \left( \cK_h - I \right)f
\end{equation*}
exists in $\Sigma_s$. 
Since $\| \cdot \|_{L^2} \leqs \| \cdot \|_h$ for any $h > 0$, the limit also exists in $L^2$. 
It follows that $L_s f = - q^w(x,D) f$ for $f \in D(L_s)$, that is $L_s \subseteq - q^w(x,D)$. 

Let $f \in \Sigma_s(\rr d)$. By the density $D(L_s) \subseteq \Sigma_s(\rr d)$ there exists a sequence $(f_n)_{n \geqs 1} \subseteq D(L_s)$
such that $f_n \to f$ in $\Sigma_s$. 
The estimate \eqref{eq:qwcontGS} gives for any $h_1 > 0$
\begin{equation*}
\| L_s (f_n - f_m ) \|_{h_1} 
= \|  q^w(x,D) (f_n - f_m ) \|_{h_1} 
\lesssim \|  f_n - f_m  \|_{h_2}
\end{equation*}
for some $h_2 > 0$. 
Thus $( L_s f_n )_{n \geqs 1}$ is a Cauchy sequence in $\Sigma_s(\rr d)$ which converges to an element $g \in \Sigma_s(\rr d)$. 
From the closedness of $L_s$ it follows that $f \in D(L_s)$ and $L_s f = g$. 
Hence $D(L_s) = \Sigma_s(\rr d)$ and $L_s$ is continuous on $\Sigma_s(\rr d)$. 
\end{proof}

As in \eqref{eq:Aextension} we may extend $L_s$ uniquely to a continuous operator on $\Sigma_s'(\rr d)$ equipped with its weak$^*$ topology, denoted $\Sigma_{s, \rm w}'(\rr d)$. 
In fact we set, using the formal $L^2$ adjoint $L_s^* = - \overline{q}^w(x,D)$ acting on $\Sigma_s$,
\begin{equation}\label{eq:Lsextension}
(L_s u,\fy) = (u, L_s^* \fy), \quad u \in \Sigma_s'(\rr d), \quad \fy \in \Sigma_s(\rr d).  
\end{equation}

The space $\Sigma_s'(\rr d)$ equipped with its strong topology is denoted $\Sigma_{s,\rm str}'(\rr d)$, 
and the topology is defined by the seminorms 
\begin{equation*}
\Sigma_s'(\rr d) \ni u \mapsto \sup_{\fy \in B} |(u,\fy)|
\end{equation*}
for all bounded subsets $B \subseteq \Sigma_s(\rr d)$. 
Then $L_s$ defined by \eqref{eq:Lsextension} is continuous on $\Sigma_{s,\rm str}'(\rr d)$.

We can now formulate and prove a Gelfand--Shilov distribution version of Theorem \ref{thm:strongcontlocequicontS}.

\begin{thm}\label{thm:strongcontGSdist}
The semigroup $\cK_t$ is: 
\begin{enumerate}[\rm (i)] 
\item strongly continuous on $\Sigma_{s,\rm w}'$, and
\item locally equicontinuous strongly continuous on $\Sigma_{s,\rm str}'$.  
\end{enumerate}
\end{thm}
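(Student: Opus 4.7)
The plan is to mirror the Schwartz-space argument used for Theorem \ref{thm:strongcontlocequicontS}, with Theorems \ref{thm:contGS} and \ref{thm:strongcontGS} playing the roles that Lemma \ref{lem:boundedpropagator} and Lemma \ref{lem:strongcont} did earlier. The key preliminary observation is that $\cK_t^* = \cK_{e^{- 2 i t \overline{F}}}$, as noted after \eqref{eq:L2adjoint}, is itself a propagator of the class under study, since the map $Q \mapsto \overline{Q}$ preserves the assumption $\re Q \geqs 0$. Hence Theorems \ref{thm:contGS} and \ref{thm:strongcontGS} apply verbatim to $\{ \cK_t^* \}_{t \geqs 0}$. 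The semigroup property on $\Sigma_s'(\rr d)$ and continuity of $\cK_t$ on both $\Sigma_{s,\rm w}'$ and $\Sigma_{s,\rm str}'$ for each fixed $t \geqs 0$ then follow by transposing the corresponding properties of $\cK_t^*$ through the definition \eqref{eq:dualsemigroupGS}, exactly as in the analogous step of the Schwartz argument.

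For (i), fix $u \in \Sigma_s'(\rr d)$ and $\fy \in \Sigma_s(\rr d)$. By continuity of $u$ on the Fr\'echet space $\Sigma_s(\rr d)$, combined with Lemma \ref{lem:seminorms}, there exist $h > 0$ and $C > 0$ with $|(u,\psi)| \leqs C \| \psi \|_h$ for all $\psi \in \Sigma_s(\rr d)$. Then
\begin{equation*}
| ((\cK_t - I) u, \fy) |
= | (u, (\cK_t^* - I) \fy) |
\leqs C \| (\cK_t^* - I) \fy \|_h
\longrightarrow 0, \quad t \to 0+,
\end{equation*}
by Theorem \ref{thm:strongcontGS} applied to $\cK_t^*$ with the singleton bounded set $B = \{ \fy \}$. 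This yields strong continuity on $\Sigma_{s,\rm w}'$.

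For (ii), let $B \subseteq \Sigma_s(\rr d)$ be bounded and $T > 0$. Applying Theorem \ref{thm:contGS} to $\cK_t^*$ shows that for every $h > 0$ the quantity $\sup_{\fy \in B, \ 0 \leqs t \leqs T} \| \cK_t^* \fy \|_h$ is finite, so the set $\widetilde B = \{ \cK_t^* \fy : \fy \in B, \ 0 \leqs t \leqs T \}$ is bounded in $\Sigma_s(\rr d)$. Hence
\begin{equation*}
\sup_{\fy \in B} |(\cK_t u, \fy)|
= \sup_{\fy \in B} |(u, \cK_t^* \fy)|
\leqs \sup_{\psi \in \widetilde B} |(u, \psi)|, \quad 0 \leqs t \leqs T,
\end{equation*}
and the right hand side is a seminorm of $u$ in $\Sigma_{s,\rm str}'$; this delivers the local equicontinuity. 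Strong continuity on $\Sigma_{s,\rm str}'$ uses the same dualization together with the uniform-in-$B$ property \eqref{eq:verystrongcont}: with $C$ and $h$ as in (i),
\begin{equation*}
\sup_{\fy \in B} | ((\cK_t - I) u, \fy) |
\leqs C \sup_{\fy \in B} \| (\cK_t^* - I) \fy \|_h
\longrightarrow 0, \quad t \to 0+,
\end{equation*}
where the limit now invokes Theorem \ref{thm:strongcontGS} with the bounded set $B$ itself.

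No serious obstacle is anticipated; the only point that requires care is the verification that $\cK_t^*$ truly lies in the class to which Theorems \ref{thm:contGS} and \ref{thm:strongcontGS} apply, which follows from $\re \overline{Q} = \re Q \geqs 0$ and the identity $\cK_t^* = \cK_{e^{- 2 i t \overline{F}}}$. As an alternative route in the spirit of Remark \ref{rem:abstractdual}, one could appeal to reflexivity of $\Sigma_s(\rr d)$ (Proposition \ref{prop:GSreflexive}) and Kōmura's results \cite{Komura1} to deduce (ii) from (i) abstractly, but the direct duality argument sketched above is both shorter and self-contained.
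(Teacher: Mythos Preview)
Your proposal is correct and follows essentially the same duality argument as the paper's proof: both transpose the uniform continuity (Theorem \ref{thm:contGS}) and the uniform-over-bounded-sets strong continuity (Theorem \ref{thm:strongcontGS}) from $\cK_t^*$ acting on $\Sigma_s$ to $\cK_t$ acting on $\Sigma_s'$, via the pairing \eqref{eq:dualsemigroupGS}. Your explicit verification that $\cK_t^*$ lies in the class to which these theorems apply (through $\re \overline{Q} = \re Q \geqs 0$) makes this step slightly more transparent than in the paper, which invokes the theorems for $\cK_t^*$ without further comment.
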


\begin{proof}
The semigroup property $\cK_{t_1 + t_2} = \cK_{t_1} \cK_{t_2}$ for $t_1, t_2 \geqs 0$, as well as $\cK_0 = I$, on $\Sigma_s'(\rr d)$
defined by \eqref{eq:dualsemigroupGS}
follow from the corresponding properties on $\Sigma_s(\rr d)$, as in the proof of Theorem \ref{thm:semigroupMs}. 

Let $u \in \Sigma_s'(\rr d)$ and let $T > 0$.  
For $0 \leqs t \leqs T$ fixed, a seminorm of $\cK_t u$ considered as an element in $\Sigma_{s,\rm str}'$ is defined by a bounded set $B \subseteq \Sigma_s(\rr d)$ as 
\begin{equation*}
\sup_{\fy \in B} |(\cK_t u,\fy)| = \sup_{\fy \in B} |(u, \cK_t^* \fy)|
\end{equation*}
and the right-hand side is a seminorm of $u$, since $\cK_t^* B \subseteq \Sigma_s(\rr d)$ is a bounded set according to 
Theorem \ref{thm:contGS}. 
This shows the continuity $\cK_t: \Sigma_{s,\rm str}'(\rr d) \to \Sigma_{s,\rm str}'(\rr d)$ 
as well as the continuity $\cK_t: \Sigma_{s,\rm w}'(\rr d) \to \Sigma_{s,\rm w}'(\rr d)$
for each fixed $t$ such that $0 \leqs t \leqs T$.
Theorem \ref{thm:contGS} also shows that $\{ \cK_t^* B, \ 0 \leqs t \leqs T \} \subseteq \Sigma_s(\rr d)$ is a bounded set
so $\cK_t$ is locally equicontinuous on $\Sigma_{s,\rm str}'(\rr d)$. 

Finally let $B \subseteq \Sigma_s(\rr d)$ be a bounded set and let $u \in \Sigma_s'(\rr d)$. 
For some $h > 0$ we obtain using Theorem \ref{thm:strongcontGS} 
\begin{align*}
\sup_{\fy \in B} |( (\cK_t -I) u,\fy)| 
= \sup_{\fy \in B} |(u, (\cK_t^*-I) \fy)|
\lesssim \sup_{\fy \in B} \| (\cK_t^*-I) \fy \|_h
\longrightarrow 0, \quad t \longrightarrow 0^+, 
\end{align*}
which shows that $\cK_t$ is strongly continuous on $\Sigma_{s,\rm str}'(\rr d)$
as well as on $\Sigma_{s,\rm w}'(\rr d)$.
\end{proof}

The generator of the semigroup $\cK_t$ acting on $\Sigma_{s,\rm w}'(\rr d)$ is denoted $L_{\rm w}'$, 
and the generator of the semigroup $\cK_t$ acting on $\Sigma_{s,\rm str}'(\rr d)$ is denoted $L_{\rm str}'$. 
By \cite[Proposition~2.1]{Komura1} we have $L_{\rm w}' = L_s$ defined by \eqref{eq:Lsextension}, 
and hence $D(L_{\rm w}') = \Sigma_s'$. 

The argument that proves $A_{\rm str}' = A_{\rm w}'$ after Theorem \ref{thm:strongcontlocequicontS} again shows that $L_{\rm str}' = L_{\rm w}'$. 
Again we may thus conclude that
the two semigroups have identical generators. 
Denoting $L' = L_{\rm str}' = L_{\rm w}'$ we have $D(L') = \Sigma_{s}'$.
We may again invoke \cite[Proposition~1.2]{Komura1} and \cite[pp.~483--84]{Kato1} to yield the following result
which is conceptually similar to Corollary \ref{cor:uniquenessCP3}.  
Note that the uniqueness space is again larger than the solution space: 
$C^1([0,\infty),\Sigma_{s,\rm str}') \subseteq C^1([0,\infty),\Sigma_{s,\rm w}')$. 

\begin{cor}\label{cor:uniquenessCP4}
For any $u_0 \in \Sigma_s' (\rr d)$
the Cauchy problem \eqref{eq:CP} has the solution $\cK_t u_0$ in the space 
$C^1([0,\infty),\Sigma_{s,\rm str}')$. 
The solution is unique in the space $C^1([0,\infty),\Sigma_{s,\rm w}')$. 
\end{cor}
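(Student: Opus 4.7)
The plan is to reduce the corollary to a direct application of the abstract semigroup theory already established in Theorem \ref{thm:strongcontGSdist} together with the identification of the generator $L'$. The key observation is that $D(L') = \Sigma_s'(\rr d)$ is the whole space, so every initial datum $u_0 \in \Sigma_s'(\rr d)$ automatically lies in the domain of the generator. This is the analog of the situation in Corollary \ref{cor:uniquenessCP3}, and I would mimic that proof essentially verbatim, simply substituting $(L', \Sigma_{s,\rm str}', \Sigma_{s,\rm w}')$ for $(A', \cS_{\rm str}', \cS_{\rm w}')$.

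For existence, I would invoke \cite[Proposition~1.2]{Komura1}, whose hypotheses are precisely local equicontinuity and strong continuity of $\cK_t$ on the locally convex space $\Sigma_{s,\rm str}'$, both supplied by Theorem \ref{thm:strongcontGSdist}(ii). That result yields that $t \mapsto \cK_t u_0$ is differentiable in $\Sigma_{s,\rm str}'$ for every $u_0 \in D(L') = \Sigma_s'(\rr d)$, with derivative $L' \cK_t u_0 = \cK_t L' u_0$. Since $L' = -q^w(x,D)$ on $\Sigma_s'$ by the extension \eqref{eq:Lsextension}, this is exactly the equation in \eqref{eq:CP}, and the initial condition $\cK_0 u_0 = u_0$ is built into the semigroup property. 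The right continuity of the derivative at $t = 0$ is inherent in the Komura statement, giving membership in $C^1([0,\infty), \Sigma_{s,\rm str}')$.

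For uniqueness, I would argue in $C^1([0,\infty), \Sigma_{s,\rm w}')$, which is the larger space, using the classical Kato trick cited at \cite[pp.~483--84]{Kato1}. Fix $t > 0$ and a putative solution $u \in C^1([0,\infty), \Sigma_{s,\rm w}')$, and consider $v(s) = \cK_{t-s} u(s)$ for $s \in [0,t]$. Testing against $\fy \in \Sigma_s(\rr d)$ gives $(v(s), \fy) = (u(s), \cK_{t-s}^* \fy)$, and one differentiates this scalar function of $s$ using the chain rule, the equation $\partial_s u = -q^w(x,D) u = L_{\rm w}' u$, and the fact that $\cK_{t-s}^* = \cK_{e^{-2i(t-s)\overline F}}$ is itself a strongly continuous semigroup on $\Sigma_s(\rr d)$ with generator $(L_s)^* = -\overline q^w(x,D)$ (by symmetry with Corollary \ref{cor:locequicont} applied to the conjugate matrix $\overline F$). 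The two contributions cancel, so $s \mapsto (v(s),\fy)$ is constant on $[0,t]$, yielding $(u(t),\fy) = (\cK_t u_0,\fy)$ for every $\fy$, i.e. $u(t) = \cK_t u_0$ in $\Sigma_{s,\rm w}'$.

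The main obstacle is making the differentiation of $s \mapsto (u(s), \cK_{t-s}^* \fy)$ rigorous in the locally convex setting, since the weak$^*$ $C^1$ regularity of $u$ only guarantees differentiability when tested against a \emph{fixed} element of $\Sigma_s$; one must justify splitting the $s$-dependence of the product by exploiting that $\cK_{t-s}^* \fy$ is itself strongly differentiable in $\Sigma_s(\rr d)$ at every $s$ (again by Komura's result applied to the conjugate semigroup on $\Sigma_s$). Once this bilinear product rule is in place, the cancellation is immediate and uniqueness follows. The inclusion $C^1([0,\infty),\Sigma_{s,\rm str}') \subseteq C^1([0,\infty),\Sigma_{s,\rm w}')$ is trivial since the strong topology is finer than the weak$^*$ topology on $\Sigma_s'$.
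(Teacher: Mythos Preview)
Your proposal is correct and follows essentially the same approach as the paper: invoke \cite[Proposition~1.2]{Komura1} for existence in $C^1([0,\infty),\Sigma_{s,\rm str}')$ using Theorem~\ref{thm:strongcontGSdist}(ii) and $D(L')=\Sigma_s'$, and invoke \cite[pp.~483--84]{Kato1} for uniqueness in $C^1([0,\infty),\Sigma_{s,\rm w}')$. The paper states the corollary as an immediate consequence of these two citations without spelling out the Kato argument; you go further by sketching the $v(s)=\cK_{t-s}u(s)$ computation and correctly flagging the bilinear product rule as the only nontrivial step, which is exactly what the Kato reference handles.
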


There is also an alternative way to show $D(L_{\rm str}' ) = \Sigma_s'$, cf. Remark \ref{rem:abstractdual}. 
In fact, if we can show that $\Sigma_s(\rr d)$ is a reflexive space
then \cite[Theorem 1 and its Corollary]{Komura1} show that 
$\cK_t$, considered as a strongly continuous semigroup on $\Sigma_{s,\rm w}'$, is necessarily also 
strongly continuous on $\Sigma_{s,\rm str}'$, and the two semigroups have identical generators. 

Thus it remains to show that $\Sigma_s(\rr d)$ is a reflexive space (cf. \cite[Theorem~I.6.2]{Gelfand2}), 
which may be of independent interest. 
A locally convex space $X$ is called reflexive provided $X \mapsto (X_\beta')_\beta '$ is a topological isomorphism
\cite[p.~144]{Schaefer1}. Here $X_\beta'$ denotes the dual of $X$, equipped with its strong topology. 

\begin{prop}\label{prop:GSreflexive}
If $s > \frac{1}{2}$ then the space $\Sigma_s(\rr d)$ is reflexive. 
\end{prop}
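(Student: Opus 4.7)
The plan is to prove that $\Sigma_s(\rr d)$ is a Fr\'echet--Montel space (i.e., every bounded subset is relatively compact) and then invoke the classical fact that a Fr\'echet--Montel space is reflexive \cite{Schaefer1}. This route also yields the ``perfect space'' property mentioned earlier in the preliminaries.

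I use the Hermite--series identification \eqref{eq:GSsequence}, which gives a topological isomorphism between $\Sigma_s(\rr d)$ and the projective limit $\bigcap_{r > 0} \ell_{s,r}^2$. Since $\| \cdot \|_{\ell_{s,r}^2}$ is nondecreasing in $r$, the topology is generated by the countable family $\{ \| \cdot \|_{\ell_{s,k}^2} \}_{k \in \no}$, so $\Sigma_s(\rr d)$ is a Fr\'echet space. The essential observation is that for $0 < r < r'$ the canonical inclusion $\ell_{s,r'}^2 \hookrightarrow \ell_{s,r}^2$ is a compact operator. Indeed, with respect to the natural orthonormal bases it is the diagonal operator with eigenvalues $e^{(r - r')|\alpha|^{1/(2s)}}$, $\alpha \in \nn d$, and these tend to zero as $|\alpha| \to \infty$; therefore the operator is a norm limit of finite--rank diagonal operators and hence compact.

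Next I would extract sequential relative compactness. Let $B \subseteq \Sigma_s(\rr d)$ be bounded, so that $B$ is bounded in each $\ell_{s,r}^2$. Given a sequence $(f_n)_{n \geqs 1} \subseteq B$, apply the compactness of $\ell_{s,k+1}^2 \hookrightarrow \ell_{s,k}^2$ inductively on $k \in \no$ to extract nested subsequences that converge in $\ell_{s,k}^2$, and pass to the diagonal subsequence $(g_n)_{n \geqs 1}$. Then $(g_n)$ converges in each $\ell_{s,k}^2$; by uniqueness of limits the common limit $f$ lies in $\bigcap_{k} \ell_{s,k}^2 = \Sigma_s(\rr d)$, and convergence of $g_n \to f$ holds in every defining seminorm, hence in the Fr\'echet topology of $\Sigma_s(\rr d)$. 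Since $\Sigma_s(\rr d)$ is metrizable, sequential relative compactness of $B$ is equivalent to relative compactness, so $\Sigma_s(\rr d)$ is Montel, and therefore reflexive.

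I do not expect serious obstacles. The compactness of each diagonal embedding and the diagonal extraction argument are standard Hilbert space computations, and the implication ``Fr\'echet--Montel $\Rightarrow$ reflexive'' is classical in locally convex space theory. The only care required is in making the reduction to sequence spaces via \eqref{eq:GSsequence} fully explicit, and in choosing an exhaustion $r_k \nearrow \infty$ of the parameter $r$ that generates the original Fr\'echet topology.
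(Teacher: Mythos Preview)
Your argument is correct and, like the paper, ultimately establishes the Montel property (every bounded set is relatively compact) and deduces reflexivity from it. The execution, however, is genuinely different. You work through the Hermite sequence model \eqref{eq:GSsequence}: the embeddings $\ell_{s,r'}^2 \hookrightarrow \ell_{s,r}^2$ are diagonal with eigenvalues $e^{(r-r')|\alpha|^{1/(2s)}} \to 0$, hence compact, and a diagonal extraction finishes the job. The paper instead works directly with the function-space seminorms $\|\cdot\|_{\mathcal S_{s,h}}$: given a bounded sequence it splits the supremum into large $|x|$, large $|\alpha+\beta|$, and a remaining compact-ball/finite-order part handled by Arzel\`a--Ascoli, obtaining a Cauchy subsequence in each $\mathcal S_{s,h}$. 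Your route is shorter and conceptually cleaner once the Langenbruch isomorphism is granted; the paper's route is more self-contained relative to the original definition of $\Sigma_s$ and avoids relying on that isomorphism as a black box. Both yield the ``perfect space'' property announced in Section~\ref{sec:prel}.
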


\begin{proof}
By \cite[Exercise~V.52]{Reed1} the Fr\'echet space $\Sigma_s(\rr d)$ carries the Mackey topology. 
By \cite[Exercise~V.56 (a) and Lemma on p.~166]{Reed1} it remains to prove the following statement: 
Every weakly closed and weakly bounded subset $B \subseteq \Sigma_s(\rr d)$ is weakly compact. 

By \cite[Theorem~V.23]{Reed1} $B \subseteq \Sigma_s(\rr d)$ is bounded in the Fr\'echet space topology of $\Sigma_s(\rr d)$. 
The Fr\'echet space topology on $\Sigma_s(\rr d)$ is stronger than the weak topology. 
This fact implies that $B \subseteq \Sigma_s(\rr d)$ is closed in its Fr\'echet space topology, 
and if $B$ is shown to be compact in the Fr\'echet space topology then it is also weakly compact. 

Thus it remains to show that $B \subseteq \Sigma_s(\rr d)$ is compact in its Fr\'echet space topology. 
Since the Fr\'echet space topology of $\Sigma_s(\rr d)$ is metric  
we may prove compactness of $B$ by showing that any sequence $( f_n )_{n \geqs 1} \subseteq B$ has a convergent subsequence. 
The space $\Sigma_s(\rr d)$ is complete and $B$ is closed so it is suffices to show the existence of a Cauchy subsequence of $(f_n)_{n \geqs 1} \subseteq \Sigma_s(\rr d)$.
We accomplish this by constructing a subsequence which is Cauchy in the seminorm \eqref{eq:seminormSigmas} for the space ${\mathcal S_{s,h}}$ for each $0 < h \leqs 1$. 

We have since $B \subseteq \Sigma_s(\rr d)$ is bounded
\begin{equation}\label{eq:fnbound}
| x^\alpha D^\beta f_{n}(x)| \leqs C_h h^{|\alpha+\beta|} (\alpha! \beta!)^{s} , \quad x \in \rr d, \quad \alpha, \beta \in \nn d, \quad n \geqs 1, \quad h > 0, 
\end{equation}
for some constants $C_h > 0$. 

Let $0 < h \leqs 1$ and let $\ep > 0$. 
The bound \eqref{eq:fnbound} gives
\begin{align*}
| x^\alpha D^\beta f_n (x) |
& = |x|^{-2} \left| \sum_{j=1}^d x_j^2 x^\alpha D^\beta f_n(x) \right| \\
& \leqs |x|^{-2} C_{\frac{h}{2}} \sum_{j=1}^d \left( \frac{h}{2} \right)^{|\alpha+\beta| + 2} (\alpha! (\alpha_j+1) (\alpha_j+2) \beta! )^s \\
& \leqs |x|^{-2} C_{\frac{h}{2}} \, \left( \frac{h}{2} \right)^{|\alpha+\beta|} (\alpha! \beta!)^s d (|\alpha|+2)^{2s} \\
& \leqs |x|^{-2} C \, C_{\frac{h}{2}} \, h^{|\alpha+\beta|} (\alpha! \beta!)^s, \quad x \in \rr d \setminus 0, \quad \alpha, \beta \in \nn d, \quad n \geqs 1,
\end{align*}
for some $C > 0$. 
This gives 
\begin{equation}\label{eq:boundlargex}
\sup_{\alpha, \beta \in \nn d, \ |x| \geqs L} 
\frac{|x^\alpha D^\beta ( f_n(x) - f_m(x))| }{h^{|\alpha+\beta|} (\alpha! \beta!)^{s}} < \ep, \quad n,m \geqs 1, 
\end{equation}
provided $L > 0$ is sufficiently large.

Next we consider the sequences $(x^\alpha D^\beta f_n(x))_{n \geqs 1}$ for $|\alpha + \beta| > N$ where $N \in \no$ is to be chosen. 
Again \eqref{eq:fnbound} yields
\begin{align*}
| x^\alpha D^\beta f_{n}(x)| 
& \leqs C_{\frac{h}{2}}  \left( \frac{h}{2} \right)^{|\alpha+\beta|}  (\alpha! \beta!)^s \\
& \leqs C_{\frac{h}{2}} 2^{-N} h^{|\alpha+\beta|} (\alpha! \beta!)^{s}, \quad x \in \rr d, \quad |\alpha + \beta| > N, \quad n \geqs 1, 
\end{align*}
which proves the estimate
\begin{equation}\label{eq:boundsmallindices}
\sup_{|\alpha+\beta| > N, \  x \in \rr d} 
\frac{|x^\alpha D^\beta ( f_n(x) - f_m(x))| }{h^{|\alpha+\beta|} (\alpha! \beta!)^{s}} < \ep, \quad n,m \geqs 1, 
\end{equation}
provided $N \in \no$ is sufficiently large.

Finally we study the sequences of functions $( x^\alpha D^\beta f_n(x) )_{n \geqs 1}$ restricted to the compact ball ${\overline B}_L = \{ x \in \rr d: \ |x| \leqs L \}$, where $\alpha,\beta \in \nn d$ satisfy $|\alpha + \beta| \leqs N$.
If $1\leqs j \leqs d$ we obtain from \eqref{eq:fnbound} if $\alpha_j = 0$
\begin{align*}
| D_j  (x^\alpha D^\beta f_n )(x) |
= | x^\alpha D^{\beta+e_j} f_n (x) |   
& \leqs C_h h^{|\alpha+\beta|+1} (\alpha! \beta!)^{s} (|\beta|+1)^s \\
& \leqs C_h (\alpha! \beta!)^{s} (|\beta|+1)^s , \quad x \in \rr d, 
\end{align*}
and if $\alpha_j > 0$
\begin{align*}
| D_j  (x^\alpha D^\beta f_n )(x) |
& = | i^{-1} \alpha_j x^{\alpha-e_j} D^\beta f_n (x)  + x^\alpha D^{\beta+e_j} f_n (x)| \\ 
& \leqs C_h (\alpha! \beta!)^{s} (|\alpha| h ^{|\alpha+\beta| -1} + h ^{|\alpha+\beta|+1} (|\beta|+1)^s) \\
& \leqs C_h (\alpha! \beta!)^{s} (|\alpha|  + (|\beta|+1)^s ) , \quad x \in \rr d. 
\end{align*}

The gradient is thus uniformly bounded with respect to $x \in \rr d$: 
\begin{equation*}
\sup_{x \in \rr d} |\nabla (x^\alpha D^\beta f_n ) (x)| \leqs C_{h,\alpha,\beta} < \infty.
\end{equation*}
The mean value theorem gives
\begin{equation*}
| (x^\alpha D^\beta f_n ) ( x) - (x^\alpha D^\beta f_n ) (y)|
\leqs C_{h,\alpha,\beta} | x - y |, \quad x,y \in \rr d, 
\end{equation*}
which shows that $\{ x^\alpha D^\beta f_n, \ n \geqs 1  \}$ is an equicontinuous set of functions on $\rr d$ for all $\alpha,\beta \in \nn d$, 
particularly if $|\alpha + \beta| \leqs N$. 

Combining with the bound \eqref{eq:fnbound} which is uniform with respect to $x \in \rr d$ and $n \geqs 1$ we find that the assumptions for the Arzel{\` a}--Ascoli theorem \cite[Theorem~11.28]{Rudin1} are satisfied for 
$\{ x^\alpha D^\beta f_n, \ n \geqs 1 \}$, for each $\alpha,\beta \in \nn d$. 

Thus we start by extracting a subsequence of $(f_n)_{n \geqs 1}$ that converges uniformly on ${\overline B}_L$. 
We apply $x^\alpha D^\beta$ to the subsequence and extract a new subsequence that converges uniformly on ${\overline B}_L$, 
consecutively, first for all $\alpha,\beta \in \nn d$
such that $|\alpha+\beta|=1$, and after that for all multi-indices $\alpha,\beta \in \nn d$ of increasing orders $|\alpha+\beta| = 2, \dots, N$. 
After a finite number of such subsequence extractions we obtain a subsequence $(f_{n_k})_{k \geqs 1}$ such that 
\begin{equation}\label{eq:boundcompactballsmallindices}
\sup_{|\alpha+\beta| \leqs N, \ |x| \leqs L} 
\frac{|x^\alpha D^\beta ( f_{n_k}(x) - f_{n_m}(x))| }{h^{|\alpha+\beta|} (\alpha! \beta!)^{s}} < \ep, \quad k,m \geqs K, 
\end{equation}
provided $K \in \no$ is sufficiently large. 
When we combine \eqref{eq:boundlargex}, \eqref{eq:boundsmallindices} and \eqref{eq:boundcompactballsmallindices}
it follows that $(f_{n_k})_{k \geqs 1}$ is a Cauchy sequence in ${\mathcal S_{s,h}}$.
\end{proof}

\section*{Acknowledgment}
Work partially supported by the MIUR project ``Dipartimenti di Eccellenza 2018-2022'' (CUP E11G18000350001).

%%%%%%%%%%%%%%%%%%%%%%%%%%%%%

\end{document}